\def\titlerunning#1{\gdef\titrun{#1}}
\def\author#1{\gdef\autrun{\def\and{\unskip, }#1}\gdef\@author{#1}}
\def\address#1{{\def\and{\\\hspace*{18pt}}\renewcommand{\thefootnote}{}%
\footnote {#1}}%
\markboth{\autrun}{\titrun}}
\def\email#1{e-mail: #1}
\newtheorem{thm}{Theorem}[section]
\newtheorem{cor}[thm]{Corollary}
\newtheorem{lemma}[thm]{Lemma}
\newtheorem{prop}[thm]{Proposition}
\newtheorem{defn}[thm]{Definition}
\newtheorem{example}[thm]{Example}
\newtheorem{remark}[thm]{Remark}
\theoremstyle{definition}
\numberwithin{equation}{section}
\def\diam{\text{diam}}
\def\N{\mathbb{N}}
\def\R{\mathbb{R}}
\def\C{\mathbb{C}}
\def\CC{\mathcal{C}}
\def\P{\mathcal{P}}
\def\L{\mathcal{L}}
\def\H{\mathcal{H}}
\def\eps{\varepsilon}
\def\ie{{\em i.e.,\ }}
\def\dom{\text{dom}}
\def\range{\text{range}}
\def\BV{\text{BV}}
\def\Osc{\text{Osc}}
\def\Var{\text{Var}}
\def\Leb{\text{Leb}}
\def\var{\mathfrak{var}}
\def\Size{\text{Size }}
\def\twothird{\mbox{\small $\frac23$}}
\def\green{\color{green}}
\begin{document}


\baselineskip=17pt


\titlerunning{The Dolgopyat inequality in \BV\ for non-Markov maps}

\title{The Dolgopyat inequality in bounded variation for non-Markov maps}

\author{Henk Bruin and Dalia Terhesiu}

\date{December 2016}

\maketitle

\address{Faculty of Mathematics, University of Vienna, Oskar Morgensternplatz 1, 1090 Vienna, Austria; \email{henk.bruin@univie.ac.at}
\and
College of Engineering, Mathematics and Physical Sciences
Harrison Building
Streatham Campus, University of Exeter,
North Park Road, Exeter EX4 4QF, UK; 
\email{daliaterhesiu@gmail.com}}

\abstract{
Let $F$ be a (non-Markov) countably piecewise expanding interval map satisfying certain regularity conditions, 
and $\tilde\L$ the corresponding transfer operator. We prove the Dolgopyat inequality for the twisted operator
$\tilde\L_s(v) = \tilde\L_s(e^{s\varphi}v)$ acting on the space \BV of
functions of bounded variation, where $\varphi$ is a piecewise $C^1$ roof 
function.
}

\section{Introduction}
A crucial method (including what is now known as the Dolgopyat inequality) to prove exponential decay of correlations 
for Anosov flows with $C^1$ stable and unstable foliations was developed by Dolgopyat \cite{D}.
Liverani \cite{L} obtained exponential decay of correlations for Anosov flows with contact structure (and hence geodesic flow on compact negatively curved manifolds of any dimension).

Baladi \& Vall\'ee \cite{BalVal} further refined the method of \cite{D}
to prove exponential decay of correlations for 
suspension semiflows over one-dimensional piecewise $C^2$ expanding Markov  
maps with $C^1$ roof functions.
This was extended to the multidimensional setting by Avila et al.\ \cite{AGY},
to prove exponential decay of correlations of Teichm\"uller flows.
Ara\'ujo \& Melbourne \cite{AM} showed that the method can be adapted to
suspension semiflows over $C^{1+\alpha}$ maps with $C^1$ roof functions, 
which enabled them to prove that the classical 
Lorenz attractor has exponential decay of correlations.

In all of the above works, the results are applied to $C^\alpha$ observables for some $\alpha > 0$.
In this paper, we consider a class of non-Markov maps 
(see Section~\ref{sec:setup}), obtain a Dolgopyat inequality on the space 
of bounded variation (BV) observables (Theorem~\ref{th-main}).
The Dolgopyat inequality obtained in this paper automatically allows us to 
obtain exponential decay of correlations for skew-products on ${\mathbb T}^2$ 
as  considered by Butterley and Eslami \cite{BE,Eslami}, where the developed methods do 
not exploit the presence of the Markov structure.

Most probably, a proof of exponential decay for \BV\ observables for the 
class of non Markov maps considered here is not the easiest route; one could, 
for instance, think of inducing to a Markov map for which exponential decay of 
correlation of $C^2$ observables is known and then use approximation arguments 
to pass to \BV\ observables. Instead, we believe that the benefit of the Dolgopyat inequality  
in this setting is that it can be used to study perturbations of the flow (such as inserting 
holes in the Poincar\'e map); it is not at all clear that this can be economically done via inducing.

The main new ingredient of the proof is to locate and control the sizes of 
the jumps associated with \BV\ functions (see Section~\ref{sec-newingr}).

\subsection{Specific Examples}\label{sec:examples}
Our results (i.e., the Dolgopyat type inequality given by Theorem~\ref{th-main}) apply to 
typical AFU maps presented in Section~\ref{sec:setup}.
By typical we mean the whole clas of AFU maps (studied by Zweim\"uller \cite{Zwei0,Zwei})
satisfying assumption \eqref{eq:k} below. This assumption is very mild, see Remark~\ref{rem:typical}.
In particular, this class contains some standard families, 
such as the shifted
$\beta$-transformations $F:[0,1] \to [0,1]$, $x \mapsto \beta x + \alpha \pmod 1$ for fixed $\alpha \in [0,1)$ 
and $\beta > 1$.

Another important example is the First Return Map of a (non-Markov) Manneville-Pomeau map.
That is, 
$$
F = f^\tau:[\frac12,1] \to [\frac12,1]\quad \text{ for }\quad
\tau(x) = \min\{ n \geq 1 : f^n(x) \in [\frac12,1]\},
$$
where
$$
f:[0,1] \to [0,1], \quad x \mapsto \begin{cases}
        x(1+2^\alpha x^\alpha) & x \in [0,\frac12); \\
        \gamma(2x-1) & x \in [\frac12,1],
       \end{cases}
$$
is a non-Markov Manneville-Pomeau map
with fixed $\alpha > 0$ and {\green $\gamma \in (\frac12,1]$}.

The assumptions below apply to these to these examples, albeit that \eqref{eq:k}
holds for all parameters with the exception of a set of Hausdorff dimension $< 1$, 
see Remark~\ref{rem:typical}. The UNI condition
\eqref{eq:UNI} is a generic condition on the roof function 
of the type previously considered in \cite{BalVal, AGY}.

\section{Set-up, notation, assumptions and results.}\label{sec:setup}

We start this section by discussing
the class of AFU maps studied by Zweim\"uller \cite{Zwei0, Zwei}.
We present their conditions in Subsections~\ref{sec:AFU}-\ref{subsec-standhyp}.

\subsection{The AFU map $F$.}\label{sec:AFU}
Let $Y$ be an interval and $F:Y \to Y$ a topologically mixing
piecewise $C^2$ AFU map (\ie uniformly expanding
with finite image partition and satisfying Adler's condition), preserving
a probability measure $\mu$ which is absolutely continuous w.r.t.\
Lebesgue measure $\Leb$.
Let $\alpha$ be the partition of $Y$ into domains of the branches of $F$,
and $\alpha^n = \bigvee_{i=0}^{n-1} F^{-i}\alpha$.
Thus $F^n:a \to F^n(a)$ is a monotone diffeomorphism for each $a \in \alpha^n$.
The collection of inverse branches of $F^n$ is denoted as $\H_n$,
and each $h \in \H_n$ is associated to a unique $a \in \alpha^n$
such that $h:F^n(a) \to a$ is a contracting diffeomorphism.

\subsection{Uniform expansion.}
Let
\begin{equation}\label{eq:rho}
\rho_0 = \inf_{x \in Y} |F'(x)| \quad \text{ and } \quad
\rho = \rho_0^{1/4}.
\end{equation}
Since $F$ is uniformly expanding, $\rho_0 > \rho > 1$,
but in fact, we will assume that $\rho_0 > 2^{4/3}$,
which can be achieved by taking an iterate.

\subsection{Adler's condition.}
This condition states that
$\sup_{a \in \alpha} \sup_{x \in a} \frac{|F''(x)|}{|F'(x)|^2} < \infty$.
As $F$ is expanding, $\frac{|(F^n)''(x)|}{|(F^n)'(x)|^2}$ is 
bounded uniformly over the iterates 
$n \geq 1$, $a \in \alpha^n$ and $x \in a$ as well.
Thus, there is $C_1 \geq 0$ such that
\begin{equation}\label{eq:adler}
\frac{|(F^n)''(h(x))|}{|(F^n)'(h(x))|^2} \leq C_1 
\quad \text{ and } \quad 
\frac{h'(x)}{h'(x')} \leq e^{C_1 |x - x'|}
\end{equation}
for all $n \geq 1$, $h \in \H_n$ and $x, x' \in \dom(h)$.
The second inequality follows from the first by a standard computation.

\subsection{Finite image partition.}\label{sec:finite_image}
The map $F$ need not preserve a Markov partition, but has the finite image 
property. Therefore $K := \min\{ |F(a)| : a \in \alpha\}$ is positive.
We assume that $F$ is topologically mixing.
This implies that there is $k_1 \in \N$ such 
that $F^{k_1}(J) \subset Y$ for all intervals $J$ of 
length $|J| \geq \delta_0 := \frac{K(\rho_0-2)}{5e^{C_1}\rho_0}$
(this choice of $\delta_0$ is used in Lemma~\ref{lem:eta1}).

Let $X_1 = X'_1$ be the collection of boundary points of $F(a)$, $a \in \alpha$,
where $\alpha$ is the partition of $Y$ into branches of $F$.
Due to the finite image property, 
$X_1$ is a finite collection of points; we denote its cardinality by $N_1$.
Inductively, let $X'_k = F(X'_{k-1})$, 
\ie  the set of ``new'' boundary points of the $k$-th image partition,
 and $X_k = \cup_{j \leq k} X'_j$.
Therefore $\# X'_k \leq k N_1$.
Let $\{ \xi_i \}_{i=0}^M$ be a collection of points
containing $X_k$, and put in increasing order,
Then 
$$
\P_k =\{ (\xi_{i-1}, \xi_i) : i = 1, \dots ,M \}
$$ 
is a partition of $Y$, refining the {\em image partition of $F^k$}.
In other words, the components of 
$Y \setminus \{ \xi_i \}_{i=0}^M$ are the atoms of $\P_k$.

\subsection{Roof function.}
Let $\varphi:Y \to \R^+$ be a piecewise $C^1$ function, such that 
 $\varphi \geq 1$ and
\begin{equation}\label{eq:C2}
C_2 := \sup_{h \in \H} \sup_{x \in \dom(h)} |(\varphi \circ h)'(x)| < \infty.
\end{equation}
Since a main application is the decay of correlations of the vertical suspension semi-flow on
$\{ (y,u) : y \in Y, 0 \leq u \leq \varphi(y)\}/(y, \varphi(y)) \sim (F(y),0)$,
see Subsection~\ref{sec:semiflow},
we will call $\varphi$ the {\em roof function}.

Also assume that there is  $\eps_0 > 0$ such that
\begin{equation}\label{eq:sum}
C_3 := \sup_{x \in Y} \sum_{h \in \H, x \in \dom(h)} |h'(x)| e^{\eps_0 \varphi \circ h(x)} < \infty.
\end{equation}

\subsection{Further assumption on $F$ (relevant for the non-Markov case)}\label{subsec-standhyp}
We first discuss some known properties of the transfer operator
and twisted transfer operator.
Let $\Leb$ denote Lebesque measure.
Define the \BV-norm $\| v \|_{\BV}$ of $v:I \to \C$, for an  interval $I \subset \R$, as the sum of 
its $L^1$-norm (w.r.t. $\Leb$) $\| v \|_1$ and the total
variation $\Var_I v = \inf_{\tilde v = v\text{ a.e.}} 
\sup_{x_0 < \dots < x_N \in I} \sum_{i=1}^N |\tilde v(x_i)-\tilde v(x_{i-1})|$. 

Let $\L:L^1(Y,\Leb)\to L^1(Y,\Leb)$ be the transfer operators associated to $(Y, F)$
given by $\L^n v= \sum_{h\in\H_n} |h'| v\circ h$, $n\geq 1$.
For $s = \sigma + ib \in \C$, let $\L_s$ be the twisted version of 
$\L$ defined via $\L_s v= \L(e^{s\varphi} v)$ with iterates 
\begin{equation*}
\L_s^n v= \sum_{h\in\H_n} e^{s\varphi_n\circ h} |h'| v\circ h, \quad n\geq 1.
\end{equation*}
We first note that for $s = \sigma \in \R$,
\begin{prop}\label{prop-bnorm}
There exist $\eps\in (0,1)$ such that for  all $|\sigma|<\eps$,
$\|\L_{\sigma} \|_{\BV}<\infty$.
\end{prop}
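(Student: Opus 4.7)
The plan is to bound $\|\L_\sigma v\|_{\BV} = \|\L_\sigma v\|_{L^1(\Leb)} + \Var_Y(\L_\sigma v)$ by a multiple of $\|v\|_{\BV}$, which suffices with $\eps := \eps_0$ from \eqref{eq:sum}. Only the $n=1$ case needs to be treated. For the $L^1$-piece, use $\|v\|_\infty \leq \|v\|_{\BV}$ (valid on the bounded interval $Y$) and Fubini:
\begin{equation*}
\|\L_\sigma v\|_1
\leq \int_Y \sum_{h \in \H,\, x \in \dom h} |h'(x)|\, e^{|\sigma|\varphi\circ h(x)}\, dx \cdot \|v\|_\infty
\leq C_3\,|Y|\,\|v\|_{\BV},
\end{equation*}
directly from \eqref{eq:sum}.

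For the variation I would follow the standard Lasota--Yorke-type calculation, adapted to the twist. Writing $\L_\sigma v = \sum_{h \in \H}\bigl(e^{\sigma\varphi\circ h}|h'|\cdot v\circ h\bigr)\mathbf{1}_{F(a_h)}$, where $a_h$ is the branch domain associated to $h$, the product rule for total variation and the two endpoint jumps created by $\mathbf{1}_{F(a_h)}$ give
\begin{equation*}
\Var_Y\!\bigl((e^{\sigma\varphi\circ h}|h'| v\!\circ\!h)\mathbf{1}_{F(a_h)}\bigr)
\leq \sup_{F(a_h)}\bigl(|h'|e^{|\sigma|\varphi\circ h}\bigr)\bigl(\Var_{a_h}\! v + \bigl[(C_1+|\sigma|C_2)|F(a_h)|+2\bigr]\sup_{a_h}|v|\bigr).
\end{equation*}
Here Adler's condition \eqref{eq:adler} controls $\Var_{F(a_h)}|h'|$ and also lets me compare $\sup_{F(a_h)}|h'|$ with its average; the bound \eqref{eq:C2} similarly controls $\Var_{F(a_h)}(e^{\sigma\varphi\circ h})$.

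Summing over $h$, on one hand $\sum_h \Var_{a_h}(v) \leq \Var_Y(v)$ because $\{a_h\}$ is a partition of $Y$; on the other hand, distortion together with \eqref{eq:sum} and the finite-image constant $K$ from Section~\ref{sec:finite_image} yield
\begin{equation*}
\sum_h \sup_{F(a_h)}\bigl(|h'|e^{|\sigma|\varphi\circ h}\bigr)
\leq \frac{e^{(C_1+|\sigma|C_2)|Y|}}{K} \int_Y \sum_h |h'(x)|e^{|\sigma|\varphi\circ h(x)}\,dx
\leq \frac{C_3\,|Y|\, e^{(C_1+|\sigma|C_2)|Y|}}{K},
\end{equation*}
which is finite as soon as $|\sigma|<\eps_0$. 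Combining with the $L^1$ estimate produces a constant $K_\sigma<\infty$ with $\|\L_\sigma v\|_{\BV} \leq K_\sigma\|v\|_{\BV}$.

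The main obstacle I would expect is not the product rule itself, which is routine, but the fact that $F$ may carry countably many branches: a finite-branch Lasota--Yorke argument would be immediate, whereas here I must justify the summability of the branch-by-branch estimate, and I must also account for the boundary jumps created by the characteristic functions $\mathbf{1}_{F(a_h)}$ of the non-Markov images. Both points are resolved by the same two ingredients: the exponential summability \eqref{eq:sum}, which absorbs the twist factor $e^{\sigma\varphi\circ h}$ as long as $|\sigma|<\eps_0$, and the bounded distortion coming from \eqref{eq:adler}, which converts suprema into integral averages. Once these are in place, the statement reduces to a standard BV Lasota--Yorke estimate.
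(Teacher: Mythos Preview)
Your proposal is correct and follows essentially the same route as the paper: the paper invokes Remark~\ref{rem-LYtwist} (proved in Appendix~\ref{sec-LY} as a simplification of Proposition~\ref{prop-LYineq}) for the Lasota--Yorke inequality $\Var_Y(\L_\sigma v)\leq c_1\Var_Y v+c_2\|v\|_\infty$, then combines it with $\|v\|_\infty\leq\Var_Y v+\|v\|_1$ and the $L^1$ bound from \eqref{eq:sum}; you instead unpack that Lasota--Yorke argument explicitly, using the same ingredients (Adler's condition, \eqref{eq:C2}, \eqref{eq:sum}, the finite image bound $K$). One small point you leave implicit: when summing $\sup_{F(a_h)}(|h'|e^{|\sigma|\varphi\circ h})\,\Var_{a_h}v$ over $h$, you need that the weights are uniformly bounded in $h$, which follows immediately from \eqref{eq:sum} since each summand there is $\leq C_3$.
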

\begin{proof} By Remark~\ref{rem-LYtwist}, there exist $c_1, c_2>0$ and $\eps\in (0,1)$ such that
$\Var_Y (\L_{\sigma} v) \leq c_1 \Var_Y v +c_2 \|v\|_\infty$, for all $|\sigma|<\eps$. Note that for any $v \in \BV(Y)$, $\|v\|_\infty\leq \Var_Y v+\|v\|_1$. Hence,
$\Var_Y (\L_{\sigma} v) \leq (c_1+c_2)\Var_Y v +c_2\|v\|_1$. Also, $\int_Y |\L_{\sigma} v|\, d\Leb\leq C_2\|v\|_\infty\leq C_2(\Var_Y v+\|v\|_1)$
and the conclusion follows.
\end{proof}

It is known that $\L_0=\L$ has a simple eigenvalue $\lambda_0=1$ with eigenfunction 
 $f_0 \in \BV$, \cite[Lemma 4]{Zwei0} (see also \cite{Rychlik}), and
$\frac{1}{C_4} \leq  f_0(x) \leq C_4$ for all $x \in Y$, see \cite[Lemma 7]{Zwei}.  Hence, $f_0$ is bounded from above and below.
This together with Proposition~\ref{prop-bnorm} implies that there exists $\eps\in (0,1)$ such that  $\L_{\sigma}$ has a family of simple eigenvalues 
$\lambda_{\sigma}$ for $|\sigma|<\eps$ 
with \BV\ eigenfunctions $f_{\sigma}$. 

We assumed above that $F$ has the finite image property, 
but not that $F^n$ has the finite image property uniformly over
$n \geq 1$.
We put a condition on $F$ as follows: the lengths of the atoms $p \in \P_k$, with $k$ specified below, 
do not decrease faster than $\rho^{-k}$:
\begin{equation}\label{eq:k}
\min_{p \in \P_k} \Leb(p) > \frac{16 C_8}{C_9}\frac{\sup f_{\sigma}}{\inf f_{\sigma}} \rho^{-k},
\end{equation} 
where $C_8 = 3C_7/\eta_0$ with $\eta_0:=(\sqrt 7-1)/2$ and 
$C_7 \geq 1$ is as in Lemma~\ref{lem:jump_fsigma}, and
$C_9$ is as in Lemma~\ref{lem:G}. 
Note that $\frac{\sup f_{\sigma}}{\inf f_{\sigma}}<\infty$ for $|\sigma|$ small (see Remark~\ref{rem:lambda}).

\begin{remark}\label{rem:typical}
Assumption~\eqref{eq:k} is trivially satisfied if $F$ is Markov.
For many one-parameter families of non-Markov AFU maps, one can show that 
\eqref{eq:k} only fails at a parameter set of Hausdorff dimension $< 1$.
This follows from the shrinking targets results \cite[Theorem 1 and Corollary 1]{AP} and includes the 
family of shifted $\beta$-transformations $x \mapsto \beta x + \alpha \bmod 1$.
\end{remark}

Throughout we fix $k\geq 2k_1$ sufficiently large to satisfy:
\begin{equation}\label{eq:kE}
\rho^k (\rho-1) > 12 N_1 C_8,
\end{equation}
(Inequality \eqref{eq:kE} will be used in estimates in Section~\ref{sec-disc-jumps}.)
Furthermore, we assume that
\begin{equation}\label{eq:k3}
\rho^{-2k}(\sup f_0 +\Var f_0 ) \Big(\frac{1}{\inf f_0 }+\Var\Big(\frac{1}{f_0 }\Big)\Big)<1,
\end{equation}
where $f_0$ is the positive eigenfunction of $\L_0$ associated
to eigenvalue $\lambda_0 = 1$.

\subsection{UNI condition restricted to atoms of the image partition $\P_k$}
\label{subsec-uni}
Fix $k$ as in Subsection~\ref{subsec-standhyp}. 
Let $C'_2 := \frac{C_2 \rho_0}{\rho_0-1}$ and
$C_{10} := (C_1e^{C_1} + 2(1+\eps_0) e^{\eps_0 C'_2}C'_2 + 2C_6)/(2\eta_0-4\rho_0^{-k})$,
where it follows from \eqref{eq:kE} that the denominator 
$2\eta_0 - 4\rho_0^{-k} > 0$. 
We assume that there exist $D>0$ and a multiple $n_0$ of $k$ such that
both
\begin{equation}\label{eq-n0-1}
C_{10}\rho_0^{-n_0}\frac{4\pi}{D}\leq \frac{1}{4}(2-2\cos\frac{\pi}{12})^{1/2},
\end{equation}
and the UNI (uniform non-integrability) condition holds:
\begin{equation}\label{eq:UNI}
\forall \text{ atom } p \in \P_k,  \ \exists h_1, h_2 \in \H_{n_0}\
\text{ such that } \inf_{x \in p} |\psi'(x)| \geq D,
\end{equation}
for $\psi = \varphi_{n_0} \circ h_1 - \varphi_{n_0} \circ h_2: p \to \R$.

\subsection{Main result}
Let $b\in\R$. For the class of \BV\ functions we define
\begin{equation}\label{eq-bnorm}
\|v\|_b= \frac{\Var_Y v }{1+|b|} + \|v\|_1.
\end{equation}
With the above specified, we can state our main result, a Dolgopyat type inequality.

\begin{thm}\label{th-main}
Suppose that all the above assumptions, ~\eqref{eq:rho} -- \eqref{eq:UNI}, 
on the AFU map $F$, on $k$ and on the roof function $\varphi$ hold
(in particular, we assume that UNI~\eqref{eq:UNI} hold for some $D>0$).
Then there exists $A\geq n_0$ and $\eps, \gamma<1$ such that
for all $|\sigma| < \eps$ and $|b| > \max\{4\pi/D, 2\}$ and for all $n\geq A \log|b|$,
$$
\| \L_s^n\|_{b}\leq \gamma^n.
$$
\end{thm}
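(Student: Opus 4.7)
\medskip

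\noindent\textbf{Proof plan.}

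The plan is to follow the Dolgopyat--Baladi--Vall\'ee scheme, adapted to the space \BV. I would first normalize by the leading eigendata. For $|\sigma|<\eps$ let $\lambda_\sigma$ and $f_\sigma$ be the simple eigenvalue/eigenfunction of $\L_\sigma$ coming from Proposition~\ref{prop-bnorm} and \cite{Zwei0,Zwei}; define the ``twisted Markov'' operator $M_s v := \lambda_\sigma^{-1}f_\sigma^{-1}\L_s(f_\sigma v)$, so that $M_\sigma \mathbf 1 = \mathbf 1$ and $M_s^n v = \sum_{h\in\H_n} w_h(x)\, e^{ib\varphi_n\circ h(x)} v\circ h(x)$ where $w_h\ge 0$ satisfies $\sum_h w_h=1$ (when $\sigma=0$). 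Since $\sup f_\sigma/\inf f_\sigma$ is bounded, Theorem~\ref{th-main} follows from a bound on $M_s^n$ in the norm \eqref{eq-bnorm} plus a Lasota--Yorke inequality $\Var_Y(\L_s^n v)\le C\rho^{-n}\Var_Y v+C|b|\|v\|_1$ deduced from Adler's condition~\eqref{eq:adler} and the regularity~\eqref{eq:C2} of $\varphi$ (this is the reason the factor $1/(1+|b|)$ appears in $\|\cdot\|_b$).

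The heart of the matter is a $|b|$-cancellation estimate at scale $n_0$. For each atom $p\in\P_k$ the UNI condition \eqref{eq:UNI} produces $h_1,h_2\in\H_{n_0}$ with $|\psi'|\ge D$ on $p$, so $e^{ib\psi}$ makes a full oscillation on any subinterval of length $\sim 4\pi/(|b|D)$. Partition each such $p$ into subintervals $I_{p,j}$ of length $\asymp 1/|b|$, and on those subintervals use the triangle inequality in the modified form
\[
|\alpha_1 e^{ib\psi(x)}+\alpha_2|\le (1-\tfrac12(2-2\cos\tfrac{\pi}{12})^{1/2})(|\alpha_1|+|\alpha_2|)
\]
available on the half of each $I_{p,j}$ where the angle $\arg(\alpha_1 e^{ib\psi}\overline{\alpha_2})$ stays away from $0$. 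Packaging this into a weight $J=J_{b,\sigma}:Y\to[0,1]$, equal to a constant $<1$ on the selected subintervals of good atoms and equal to $1$ elsewhere, one obtains the pointwise majorization $|M_s^{n_0} v|\le M_\sigma^{n_0}(J|v|)$, analogous to the one in \cite{BalVal,AGY}. Condition \eqref{eq:k} guarantees that the atoms of $\P_k$ are long enough to contain many such subintervals, so the ``gain'' integrated against the invariant density is a definite fraction, giving $\int J\, f_0\, d\Leb \le 1-\eta_1$ for some $\eta_1>0$.

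The main obstacle, and the point where the \BV\ setting departs from the $C^\alpha$ one, is to convert the pointwise bound $|M_s^{n_0}v|\le M_\sigma^{n_0}(J|v|)$ into a bound in $\|\cdot\|_b$. Unlike in the H\"older setting, the weight $J$ is piecewise constant and has jumps at the boundaries of the atoms of $\P_k$, and each jump produces a contribution to $\Var_Y(J|v|)$ of the order $|v|$ evaluated at the jump. This is where the ``new ingredient'' of Section~\ref{sec-newingr} is needed: I would control these jumps via Lemma~\ref{lem:jump_fsigma} (the $C_7$-estimate on jumps of $f_\sigma$) and Lemma~\ref{lem:G}, using \eqref{eq:kE} to ensure that the number $\#X_k\lesssim k N_1$ of new boundary points, pushed forward $n_0/k$ times, remains negligible compared to $\rho^{n_0}$. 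The iterated operator $M_\sigma^{n_0}$ contracts variation by a factor $\rho^{-n_0}$ (up to an $L^1$ term), so the combined effect is a contraction factor $\theta<1$ on $\|\cdot\|_b$ per step of $n_0$.

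Finally, I would iterate: choose $n_0$ so that \eqref{eq-n0-1} and \eqref{eq:UNI} hold, and apply the one-step contraction $m$ times with $m\sim \log|b|/n_0$. The Lasota--Yorke inequality introduces a term of size $|b|\|v\|_1$; the choice $n\ge A\log|b|$ for sufficiently large $A$ lets the factor $\theta^m\le |b|^{-\kappa}$ absorb that $|b|$, yielding $\|\L_s^n\|_b\le \gamma^n$ with $\gamma<1$. The constant $\eps$ in $|\sigma|<\eps$ is chosen throughout to keep $\lambda_\sigma$ and $f_\sigma$ close to their $\sigma=0$ values, so all constants depending on these remain uniform.
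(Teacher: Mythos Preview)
Your outline follows the right overall scheme, but there is a structural gap in how you propose to iterate the cancellation.

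The pointwise majorization $|M_s^{n_0}v|\le M_\sigma^{n_0}(J|v|)$ is only useful if you can apply it again to $M_s^{n_0}v$ with a new majorant having the \emph{same} regularity as before. In the H\"older setting this is automatic, but in \BV\ it is not: the majorant $|v|$, or $M_\sigma^{n_0}(J|v|)$, has jumps whose locations and sizes are not controlled a priori, and these jumps are exactly what obstruct the oscillation estimate needed for the cancellation lemma at the next step. The paper resolves this by working not with a single function but with a \emph{cone of pairs} $(u,v)\in\CC_b$ (see~\eqref{eq:cone}), where $u\ge|v|$ is a positive majorant whose discontinuities lie only in $X_\infty$ and satisfy the exponentially-decreasing jump-size condition~\eqref{eq:EJ}, together with an oscillation bound $\Osc_I v\le C_{10}|b|\Leb(I)\sup_I u + C_8 E_I(u)$. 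Invariance of this cone under $(u,v)\mapsto(\tilde\L_\sigma^{n_0}(\chi u),\tilde\L_s^{n_0}v)$ (Lemma~\ref{lemma-invcone}, resting on Proposition~\ref{prop:inductive_jumpsize}) is what makes iteration possible, and it is this step --- not merely ``using Lemmas~\ref{lem:jump_fsigma} and~\ref{lem:G}'' --- that requires distinguishing created from propagated discontinuities and maintaining the bound $\sup_p u/\inf_p u\le 4$. Your proposal to take $J$ piecewise constant would also inject fresh jumps at every scale-$1/|b|$ subinterval boundary; the paper takes $\chi\in C^1$ with $|\chi'|\le|b|$ precisely to avoid this.

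A second gap is the passage from cone functions to arbitrary $v\in\BV$. The cone $\CC_b$ does not contain all \BV\ functions (discontinuities outside $X_\infty$ are excluded), so one cannot simply start the iteration. The paper handles this via Proposition~\ref{prop:to-cone}, which shows $\tilde\L_s^{mn_0}v$ is exponentially close in $\|\cdot\|_\infty$ to a cone element \emph{provided} the hypothesis $(H_{\sigma,m})$ holds; when $(H_{\sigma,m})$ fails, a separate argument (Lemma~\ref{lemma-complete-2}) gives contraction directly from Lasota--Yorke. The final proof interleaves these two cases via the algorithm in Section~\ref{sub-Completing the argument}. Your plan omits both the approximation step and this dichotomy. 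Note also that the Lasota--Yorke inequality you state, with weak term $C|b|\|v\|_1$, is available only for $\sigma=0$ (Remark~\ref{rem:imaginary_axis}); for $\sigma\ne 0$ the paper obtains instead $c(1+|b|)\Lambda_\sigma^{nk}(\|v\|_\infty\|v\|_1)^{1/2}$ (Proposition~\ref{prop-LYineq}), which is why the $L^2$ contraction of Lemma~\ref{lemma-L2}, rather than an $L^1$ estimate, is the correct intermediate target.
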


An immediate consequence of the above result (see, for instance,~\cite{BalVal}) is
\begin{cor}
\label{cor-main}
Suppose that all the above assumptions, ~\eqref{eq:rho} -- \eqref{eq:UNI}, 
on the AFU map $F$, on $k$ and on the roof function $\varphi$ hold.
For every $0<\alpha<1$ there exists $\eps\in (0,1)$ and $b_0>0$ such that for all $|b|\geq b_0$ and for all $|\sigma|<\eps$,
$$
\| (I - \L_s)^{-1}\|_{b} \leq |b|^\alpha.
$$
\end{cor}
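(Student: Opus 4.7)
The plan is to expand the resolvent as a Neumann series $(I-\L_s)^{-1} = \sum_{n\ge 0}\L_s^n$ and split this sum at the threshold $N := \lceil A\log|b|\rceil$ supplied by Theorem~\ref{th-main}. The tail $n\ge N$ is controlled directly by the Dolgopyat inequality: Theorem~\ref{th-main} gives $\|\L_s^n\|_b\le \gamma^n$ for $n\ge N$, hence
\[
\Big\|\sum_{n\ge N}\L_s^n\Big\|_b \le \frac{\gamma^N}{1-\gamma} \le \frac{1}{1-\gamma},
\]
uniformly in $b$ and in $|\sigma|<\eps$. The early part $n<N$ has only $O(\log|b|)$ terms, and will be bounded via a Lasota--Yorke-type uniform estimate.

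The first task for the early sum is to establish that for $|\sigma|$ sufficiently small there exist $C_*\ge 1$, independent of $b$, and $\lambda_\sigma = 1 + O(|\sigma|)$ (the leading BV-eigenvalue of $\L_\sigma$) such that
\[
\|\L_s^n\|_b \le C_*\, \lambda_\sigma^n, \qquad n\ge 0.
\]
The variation part follows by iterating the Lasota--Yorke inequality of Remark~\ref{rem-LYtwist}: differentiating $e^{s\varphi}$ produces a factor $|s|\le 1+|b|$ in $\Var_Y(\L_s v)$, which is precisely absorbed by the division by $1+|b|$ in the definition~\eqref{eq-bnorm} of $\|\cdot\|_b$. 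The $L^1$ part uses the pointwise domination $|\L_s^n v|\le \L_\sigma^n|v|$ together with the spectral structure of $\L_\sigma$ on \BV\ (isolated simple leading eigenvalue $\lambda_\sigma$ with spectral gap), giving $\|\L_s^n v\|_1 \le C''\lambda_\sigma^n\|v\|_1$.

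With this bound in hand,
\[
\Big\|\sum_{n=0}^{N-1}\L_s^n\Big\|_b \le C_* N\, \lambda_\sigma^N \le C_* A(\log|b|)\cdot |b|^{A\log\lambda_\sigma}.
\]
Given $\alpha\in(0,1)$, one chooses $\eps\in(0,1)$ small enough that $A\log\lambda_\sigma<\alpha/2$ whenever $|\sigma|<\eps$; this is possible because $\lambda_0=1$ and $\sigma\mapsto\lambda_\sigma$ is continuous at $0$ by perturbation theory of isolated simple eigenvalues. Then for $|b|\ge b_0$ large enough the logarithmic factor is dominated by $|b|^{\alpha/2}$, giving an early-sum bound of at most $|b|^{\alpha}/2$; combining with the tail yields the claim $\|(I-\L_s)^{-1}\|_b\le |b|^\alpha$.

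The main obstacle is the uniform Lasota--Yorke bound of the second paragraph: one must carefully check that, under iteration, the $(1+|b|)$-factor appearing in the variation of $\L_s^n v$ is exactly absorbed by the weighting in the norm~\eqref{eq-bnorm}, and that the residual $L^1$ growth $\lambda_\sigma^n$ is tamed by taking $|\sigma|$ small. This deduction is standard in the Dolgopyat scheme and is implicit in \cite{BalVal}, which is why the corollary is stated as an immediate consequence of the main theorem.
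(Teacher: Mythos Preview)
Your overall strategy---expand $(I-\L_s)^{-1}$ as a Neumann series and split at $N=\lceil A\log|b|\rceil$---is indeed the standard route from \cite{BalVal}, and since the paper offers no proof beyond that citation, your approach matches. The tail estimate is fine.

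There is, however, a genuine gap in your uniform bound for the early terms. You claim
\[
\|\L_s^n v\|_1 \le C''\lambda_\sigma^n\|v\|_1
\]
via pointwise domination and the BV spectral gap of $\L_\sigma$. But the spectral gap is only available on $\BV$, not on $L^1$: what you actually obtain is $\|\L_\sigma^n|v|\|_1\le\|\L_\sigma^n|v|\|_{\BV}\le C\lambda_\sigma^n\||v|\|_{\BV}\le C\lambda_\sigma^n(\Var_Y v+\|v\|_1)$. A direct $L^1\to L^1$ bound would require $e^{\sigma\varphi_n}\le C\lambda_\sigma^n$ pointwise, which fails because $\varphi$ is unbounded. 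Consequently $\|\L_s^n\|_b$ picks up an extra factor $(1+|b|)$ (this is exactly the content of \eqref{eq:Aprime}), and your early-sum estimate becomes $O(|b|\log|b|)$ rather than $O(\log|b|)$, which does not yield $|b|^\alpha$ for any $\alpha<1$.

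The difference from \cite{BalVal} is structural: there the weak part of the $b$-weighted norm is $\|\cdot\|_\infty$, so $\|v\|_\infty\le\|v\|_{(b)}$ with no $b$-loss, and your argument would go through verbatim. In the present $\BV$ setting the weak part is $\|\cdot\|_1$, and one must instead use the refined Lasota--Yorke of Proposition~\ref{prop-LYineq} together with \eqref{eq:LYineq1}, whose right-hand side carries $(\|v\|_\infty\|v\|_1)^{1/2}$ rather than $\|v\|_\infty$. Since $(\|v\|_\infty\|v\|_1)^{1/2}\le(1+|b|)^{1/2}\|v\|_b$, this yields $\|\tilde\L_s^{nk}\|_b\le C\Lambda_\sigma^{nk}(1+|b|)^{1/2}$ uniformly in $n$, and the early sum is then $O(|b|^{1/2}\log|b|)$. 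This gives the corollary for every $\alpha>1/2$; obtaining the full range $\alpha\in(0,1)$ requires further work beyond the ``immediate'' reduction you outline.
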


\begin{remark} A similar, but simplified, argument (obtained by taking $\sigma=0$ throughout the proof of Theorem~\ref{th-main} in this paper)
shows that without assuming condition \eqref{eq:sum} (that guarantees exponential tail for the roof function
$\varphi$) and with no restriction on the class of \BV\ functions,
one obtains that  for every $0<\alpha<1$, there exists $b_0>0$ such that for all $|b|\geq b_0$,
$\| (I - \L_{ib})^{-1} \|_{b} \leq |b|^\alpha$. Of course, this type of inequality does not imply exponential decay of correlation 
for suspension semiflows, but we believe it to be useful when proving sharp mixing rates for $\BV$ observables
in the non exponential situation via renewal type arguments (such
as sharp bounds for polynomial decay of correlation).
\end{remark}

\subsection{Application to suspension semi-flows}\label{sec:semiflow}
Corollary~\ref{cor-main} can be used to obtain exponential decay of correlations in terms of \BV\ functions 
for suspension semiflows  over AFU maps with a $C^1$ roof function.
Let $Y^\varphi:=\{(y,u)\in Y\times R:0\leq u\leq R(y)\}/\!\!\!\sim$,
where $(y,\varphi(y))\sim (F y,0)$, be the suspension over $Y$. The suspension semiflow  $F_t:Y^\varphi\to Y^\varphi$ is defined by $F_t(y,u)=(y,u+t)$ computed modulo identifications.
The probability measure $\mu^\varphi:=(\mu\times Leb)/\bar\varphi$, 
where $ \bar\varphi:=\int_Y\varphi d\mu$ is $F_t$-invariant. 
\\[3mm]
\paragraph{Class of observables}

Let  $F_{\BV,m}(Y^\varphi)$ be the class of observables consisting of $v(y,u) : Y^\varphi\to\C$ such that
$v$ is $\BV(Y)$ in $y$ and $C^m$ in $u$, so
$\|v\|_{\BV, m}:=\sum_{j=0}^m \| \partial_t^j  v \|_{\BV}<\infty$.

For $v\in L^1(Y^\varphi)$ and $w\in L^\infty(Y^\varphi)$ define the correlation function
\[
\rho_{t}(v,w):=\int_{Y^\varphi} v w\circ F_t\, d\mu^\varphi-\int_{Y^\varphi} v \,d\mu^\varphi \int_{Y^\varphi} w \,d\mu^\varphi.
\]

The result below gives exponential decay of correlation
for $v\in F_{\BV,2}(Y^\varphi)$ and $w\in L^\infty(Y^\varphi)$.
It is likely that this also follows by reinducing $F$
to a Gibbs-Markov AFU map, to which \cite{BalVal,AM} apply, together with an approximation argument of \BV\ functions by $C^2$ functions.
However, it is worthwhile to have the argument for the original map $F$, 
for instance in situations where reinducing is problematic, such
as for families of open AFU maps with shrinking holes.

\begin{thm}\label{thm-decay}
Suppose that all the above assumptions, ~\eqref{eq:rho} -- \eqref{eq:UNI}, 
on the AFU map $F$ and the roof function $\varphi$ hold. 
Then there exist 
constants $a_0, a_1>0$ such that
\[
|\rho_{t}(v,w)|\leq a_0 e^{-a_1 t} \|v\|_{\BV, 2}\|w\|_{\infty},
\]
 for all $v\in F_{\BV,2}(Y^\varphi)$ and $w\in L^\infty(Y^\varphi)$.
\end{thm}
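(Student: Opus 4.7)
The plan is to deduce Theorem~\ref{thm-decay} from Corollary~\ref{cor-main} by the standard Laplace-transform and contour-shift argument of Pollicott--Ruelle, as implemented for piecewise smooth suspensions in \cite{BalVal,AGY,AM}. Using the identity $F_t(y,u) = (F^n y,\, u + t - \varphi_n(y))$ for the unique $n$ with $\varphi_n(y) \leq u+t < \varphi_{n+1}(y)$, one splits the correlation integral according to $n$. Taking the Laplace transform $\hat\rho(s) = \int_0^\infty e^{-st} \rho_t(v,w)\, dt$, $\Re s > 0$, and summing the resulting geometric series in $\L_s^n$ produces a representation
\[
\bar\varphi\, \hat\rho(s) = \bigl\langle V_s,\, (I - \L_s)^{-1} W_s \bigr\rangle + \Phi(s),
\]
where $V_s,W_s \in \BV(Y)$ are built from the Laplace transforms of $v(y,\cdot), w(y,\cdot)$ in the vertical fibre restricted to $[0,\varphi(y)]$, and $\Phi(s)$ collects a boundary and mean-correction term. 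Since $v \in F_{\BV,2}(Y^\varphi)$, two integrations by parts in $u$ give $\|V_s\|_\BV \leq C(1+|b|)^{-2}\|v\|_{\BV,2}$ for $s=\sigma+ib$, while a direct estimate yields $\|W_s\|_\BV \leq C \|w\|_\infty$ for $|\sigma|$ small.

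Next, I would extend $\hat\rho$ meromorphically across the imaginary axis into a strip $\{-\eta \leq \Re s \leq 0\}$ with only a simple pole at $s=0$. Near $s=0$, the perturbation theory indicated after Proposition~\ref{prop-bnorm} (promoted to complex $s$ by analytic perturbation of the simple eigenvalue $\lambda_0=1$) provides analytic families $\lambda_s, f_s$ with a uniform spectral gap; the residue at $s=0$ matches and cancels the mean term in $\Phi(s)$. For $|b|\geq b_0$ and $|\sigma|<\eps$, Corollary~\ref{cor-main} delivers $\|(I-\L_s)^{-1}\|_b \leq |b|^\alpha$ for any $\alpha<1$. On the intermediate band $b_1 \leq |b| \leq b_0$, a compactness argument using quasi-compactness of $\L_{ib}$ on $\BV$ together with the absence of eigenvalues of modulus one (ensured by UNI~\eqref{eq:UNI}) yields a uniform resolvent bound.

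Combining the three regimes with the $|b|^{-2}$ decay from $V_s$, the integrand in the inverse-Laplace formula satisfies
\[
\bigl|e^{st}\hat\rho(s)\bigr| \leq C\, e^{-\eta t}\,(1+|b|)^{\alpha-2}\,\|v\|_{\BV,2}\|w\|_\infty,
\]
which is integrable in $b \in \R$ for any $\alpha<1$. Shifting the contour from $\Re s = 0$ to $\Re s = -\eta$ and absorbing the residue at $s=0$ (which cancels the product-of-means term) then gives the stated exponential bound with $a_1 = \eta$.

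The main obstacle is the spectral input for $b \neq 0$ in a compact range: one must rule out $1$ as an eigenvalue of $\L_{ib}$ on $\BV$, equivalently exclude solutions $f\in\BV$ to $e^{ib\varphi}\, f\circ F = f$. UNI~\eqref{eq:UNI} is precisely the quantitative non-arithmeticity of $\varphi$ that excludes such coboundaries, so the argument goes through; the remaining steps are a routine, if lengthy, contour-deformation exercise.
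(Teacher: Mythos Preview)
Your approach is essentially the paper's: Appendix~\ref{sec:expo} follows the Laplace-transform and contour-shift scheme of \cite{AM} (stated as Lemma~\ref{lemma-AM217}), splitting the analysis of $\hat\rho(s)$ into large $|b|$ (via Theorem~\ref{th-main} or equivalently Corollary~\ref{cor-main}), intermediate $0<|b|<\max\{4\pi/D,2\}$ (via quasi-compactness of $\tilde\L_{ib}$ from Remark~\ref{rem:imaginary_axis} together with the continuity estimate of Proposition~\ref{prop-continuity} and aperiodicity), and a neighbourhood of $s=0$ (via perturbation of the simple eigenvalue $\lambda_0=1$), exactly as you outline.

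There is one slip worth correcting. In your representation $\langle V_s,\,(I-\L_s)^{-1}W_s\rangle$ the resolvent acts on $W_s$, which is built from $w\in L^\infty(Y^\varphi)$ and carries no $\BV$ control in $y$; your claim $\|W_s\|_\BV\leq C\|w\|_\infty$ is therefore unjustified, and the operator $(I-\L_s)^{-1}$ on $\BV$ has nothing to act on. The transfer-operator duality in the suspension decomposition places $(I-\L_s)^{-1}$ on the factor coming from $v$, which \emph{is} in $\BV(Y)$ since $v\in F_{\BV,2}(Y^\varphi)$, while the $w$-factor need only lie in $L^\infty$ for the pairing to make sense. Swapping the roles of $V_s$ and $W_s$ in your formula fixes this; the $|b|^{-2}$ gain from the two integrations by parts still comes from the $v$-side, and the remainder of your argument (the $|b|^\alpha$ resolvent bound, integrability of the contour integrand, cancellation of the residue at $s=0$ with the product-of-means term) goes through unchanged.
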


The proof of Theorem~\ref{thm-decay} is given in Appendix~\ref{sec:expo}.
Corollary~\ref{cor-main} also implies exponential decay of correlations in terms of \BV\ functions 
for skew products on $\mathbb{T}^2$ as considered in~\cite{BE, Eslami}. We note, however, that
the strength of Corollary~\ref{cor-main} is not needed in the set-up of ~\cite{BE, Eslami}
as, in those works, the roof function is bounded and one can restrict the calculations 
to the imaginary axis.

\section{Twisted and normalized twisted transfer operators}
\label{sec-twisted-normalized}

We start with the continuty of operator $\L_s$ in $\BV$.

\begin{prop}\label{prop-continuity}
Let $\eps_0 > 0$ and  $C_3<\infty$ be as in ~\eqref{eq:sum}. Then there exists $C>0$ and $\eps \in (0, \eps_0)$ such that for all 
$|\sigma_1|,|\sigma_2|<\eps$ and for all $|b_1|, |b_2| \leq 1$,
$\|\L_{\sigma_1+ib_1}-\L_{\sigma_2+ib_2}\|_{\BV}\leq C\eps_0^{-1}|\sigma_1-\sigma_2|$.
\end{prop}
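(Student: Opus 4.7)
The plan is to use the identity $(\L_{s_1}-\L_{s_2})v = \L\bigl((e^{s_1\varphi}-e^{s_2\varphi})v\bigr)$, reducing everything to pointwise and variation bounds on the weight $e^{s_1\varphi}-e^{s_2\varphi}$, and then to run the same Lasota--Yorke type estimate that underlies Proposition~\ref{prop-bnorm} (see Remark~\ref{rem-LYtwist}), with an additional factor $\eps_0^{-1}|s_1-s_2|$ riding through every term.

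For the pointwise estimate, I would factor $e^{s_1\varphi}-e^{s_2\varphi} = e^{s_2\varphi}(e^{(s_1-s_2)\varphi}-1)$, apply $|e^z-1|\leq |z|e^{|\Re z|}$ to get $|e^{s_1\varphi}-e^{s_2\varphi}|\leq |s_1-s_2|\,\varphi\, e^{(|\sigma_2|+|\sigma_1-\sigma_2|)\varphi}$, and absorb the factor $\varphi$ using the elementary bound $\varphi e^{\delta\varphi}\leq(\eps_0-\delta)^{-1}e^{\eps_0\varphi}$ (valid for $\varphi\geq 1$ and $0<\delta<\eps_0$). Taking $\eps$ small enough that $|\sigma_2|+|\sigma_1-\sigma_2|<\eps_0/2$ whenever $|\sigma_j|<\eps$ then yields $|e^{s_1\varphi}-e^{s_2\varphi}|\leq C\eps_0^{-1}|s_1-s_2|\,e^{\eps_0\varphi}$. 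A parallel derivative estimate follows from the identity
\[
(e^{s_1\varphi}-e^{s_2\varphi})' = \bigl[(s_1-s_2)e^{s_1\varphi}+s_2(e^{s_1\varphi}-e^{s_2\varphi})\bigr]\varphi'
\]
combined with \eqref{eq:C2}, giving a bound of the same shape on each inverse branch.

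The $L^1$ piece is then immediate: $\|(\L_{s_1}-\L_{s_2})v\|_1\leq\int|(e^{s_1\varphi}-e^{s_2\varphi})v|\leq C\eps_0^{-1}|s_1-s_2|\,\|v\|_\infty\int e^{\eps_0\varphi}$, where the last integral is finite by \eqref{eq:sum} (since $\int e^{\eps_0\varphi}=\int\L(e^{\eps_0\varphi})\leq C_3|Y|$) and $\|v\|_\infty\leq\|v\|_{\BV}$. For the variation, I would expand $(\L_{s_1}-\L_{s_2})v=\sum_{h\in\H}|h'|\bigl(e^{s_1\varphi\circ h}-e^{s_2\varphi\circ h}\bigr)v\circ h$ and run the branch-by-branch Lasota--Yorke calculation from Remark~\ref{rem-LYtwist}: Adler's condition \eqref{eq:adler} controls $\Var(|h'|)$, the pointwise and derivative bounds above control $\sup$ and $\Var$ of the weight $e^{s_1\varphi\circ h}-e^{s_2\varphi\circ h}$, and summability over $\H$ is supplied by \eqref{eq:sum}.

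The only delicate step is the bookkeeping of boundary jumps from the non-Markov image partition inside the variation estimate, but this is structurally identical to what is already done for the bare Lasota--Yorke bound on $\L_\sigma$; the extra factor $\eps_0^{-1}|s_1-s_2|$ simply propagates through every term, producing the claimed Lipschitz continuity.
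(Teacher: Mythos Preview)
Your approach is essentially the same as the paper's: both factor the difference $e^{s_1\varphi}-e^{s_2\varphi}$, absorb the linear factor $\varphi$ via the elementary bound $x e^{-ax}\leq (ea)^{-1}$ to land on $e^{\eps_0\varphi}$, invoke \eqref{eq:sum} for summability over branches, and then defer the variation estimate to the branch-by-branch Lasota--Yorke calculation underlying Proposition~\ref{prop-LYineq}. Your write-up is in fact slightly more explicit about the derivative of the weight than the paper's terse ``work as in the Proof of Proposition~\ref{prop-LYineq}'', and your bound naturally comes out in terms of $|s_1-s_2|$ rather than the $|\sigma_1-\sigma_2|$ appearing in the stated inequality (which, taken literally, cannot be right when $\sigma_1=\sigma_2$ but $b_1\neq b_2$).
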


The proof of  Proposition~\ref{prop-continuity} is deferred to the end of Appendix~\ref{sec-LY}.

\begin{remark}\label{rem:lambda}
An immediate consequence of Proposition~\ref{prop-continuity} is that for any $\delta\in(0,1)$, there exists $\eps\in (0,1)$
such that
\[
\sup_{|\sigma|<\eps}|\lambda_{\sigma}-1|<\delta,\quad \sup_{|\sigma|<\eps}
\|\frac{f_{\sigma}}{f_0}-1\|_{\BV}<\delta, \quad \sup_{|\sigma|<\eps}\|\frac{f_{\sigma}}{f_0}-1\|_{\infty}<\delta
\]
for all $|\sigma| < \eps$. 
Recall that $\frac{1}{C_4} \leq  f_0(x) \leq C_4$ for all $x \in Y$.
It follows that $\frac{f_{\sigma}(x)}{f_{\sigma}(y)}
=  \frac{f_{\sigma}(x)}{f_0(x)} \frac{f_0(x)}{f_0(y)} \frac{f_0(y)}{f_{\sigma}(y)} \leq (1+\delta) C^2 (1-\delta)^{-1} < \infty$ for all $x,y \in Y$.
Hence, $\frac{\sup f_{\sigma}}{\inf f_{\sigma}} \leq C_{5}$ for $C_{5} := \frac{1+\delta}{1-\delta} C_4^2$ and $|\sigma| < \eps$.
\end{remark}

Since $\lambda_0 = 1$ and $f_0$ is strictly positive, 
due to the continuity of $\lambda_{\sigma}$ and $f_{\sigma}$ in $\sigma$, 
we can ensure that for $\eps>0$ sufficiently small
\begin{equation}\label{eq-lamfsigma}
\rho^{-1/4} <\lambda_{\sigma}  \mbox{ and } f_{\sigma} \mbox{ is strictly
positive for all } |\sigma|<\eps.
\end{equation}
By assumption~\eqref{eq:k3} and Remark~\ref{rem:lambda}, we can choose $\eps$  small enough such that for all $|\sigma| < \eps$,
\begin{equation}\label{eq:rho3}
\rho^{-2k}(\sup f_{\sigma}+\Var f_{\sigma}) \Big(\frac{1}{\inf f_{\sigma}}+\Var\Big(\frac{1}{f_{\sigma}}\Big)\Big)<1.
\end{equation}
(The above formula will be used in the proof of Proposition~\ref{prop-LYineq}.)

\begin{lemma}\label{lem:eps}
There exists $\eps\in (0,1)$ so small that for all $|\sigma| < \eps$ 
and for all $n\geq 1$,
\begin{equation}\label{eq:rho2}
\frac{1}{\lambda_{\sigma}^n}
\sup_{h \in \H_n} \sup_{x \in \dom(h)} |h'(x)| e^{\sigma \varphi_n \circ h(x)}  \leq \rho^{-3n}.
\end{equation}
\end{lemma}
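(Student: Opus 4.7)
The plan is to combine the uniform expansion bound $|h'(x)|\leq\rho_0^{-n}=\rho^{-4n}$ with the exponential tail assumption \eqref{eq:sum} via a logarithmic interpolation, and then use the continuity of $\lambda_\sigma$ at $\sigma=0$ (Remark~\ref{rem:lambda}) to swallow the interpolation error.

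The first step is a pointwise consequence of \eqref{eq:sum}: each single summand is bounded by the whole sum, so $|g'(y)|e^{\eps_0\varphi(g(y))}\leq C_3$ for every $g\in\H$ and $y\in\dom(g)$. For $h\in\H_n$, the standard factorisation $h=g_0\circ g_1\circ\cdots\circ g_{n-1}$ with $g_i\in\H$ turns the derivative into a telescoping product and the Birkhoff sum $\varphi_n\circ h$ into a sum of $\varphi$ evaluated at the intermediate preimages $y_i=g_i\circ\cdots\circ g_{n-1}(x)$, so
$$
|h'(x)|\, e^{\eps_0\varphi_n\circ h(x)}=\prod_{i=0}^{n-1}|g_i'(y_{i+1})|\, e^{\eps_0\varphi(g_i(y_{i+1}))}\leq C_3^n.
$$

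Next, for $\sigma\in[0,\eps_0]$ I would interpolate geometrically between the bounds $|h'(x)|\leq\rho^{-4n}$ and $|h'(x)|\,e^{\eps_0\varphi_n\circ h(x)}\leq C_3^n$:
$$
|h'(x)|\,e^{\sigma\varphi_n\circ h(x)}=|h'(x)|^{1-\sigma/\eps_0}\bigl(|h'(x)|\,e^{\eps_0\varphi_n\circ h(x)}\bigr)^{\sigma/\eps_0}\leq \rho^{-4n(1-\sigma/\eps_0)}C_3^{n\sigma/\eps_0}.
$$
Dividing by $\lambda_\sigma^n$ yields an estimate of the form $\bigl(\rho^{-4(1-\sigma/\eps_0)}C_3^{\sigma/\eps_0}/\lambda_\sigma\bigr)^n$, whose base equals $\rho^{-4}$ at $\sigma=0$. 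Since $\rho^{-4}<\rho^{-3}$ strictly and $\lambda_\sigma\to 1$ as $\sigma\to 0$ (Remark~\ref{rem:lambda}), one can shrink $\eps$ so that this base stays $\leq\rho^{-3}$ for all $\sigma\in[0,\eps]$, which is precisely \eqref{eq:rho2}.

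For $\sigma\in(-\eps,0]$ the inequality is immediate: $\varphi\geq 1$ forces $e^{\sigma\varphi_n\circ h}\leq 1$, hence $|h'(x)|\,e^{\sigma\varphi_n\circ h(x)}\leq\rho^{-4n}$, and \eqref{eq-lamfsigma} gives $\lambda_\sigma>\rho^{-1/4}$, producing $\rho^{-4n}\lambda_\sigma^{-n}\leq\rho^{-15n/4}\leq\rho^{-3n}$ with room to spare. The only delicate point is therefore calibrating $\eps$ in the positive range so that the factor $C_3^{\sigma/\eps_0}/\lambda_\sigma$ is absorbed into the gap between $\rho^{-4}$ and $\rho^{-3}$, but this is a pure continuity argument and presents no real obstacle.
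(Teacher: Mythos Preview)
Your proof is correct. Both your argument and the paper's rest on the same two ingredients --- the expansion bound $|h'|\leq\rho^{-4n}$ and the tail bound \eqref{eq:sum} --- and both reduce to the case $n=1$ by multiplicativity (you do this explicitly via the factorisation $|h'(x)|e^{\eps_0\varphi_n\circ h(x)}\leq C_3^n$; the paper proves $n=1$ and then says ``the statement for $n\geq 1$ follows immediately'').

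Where you differ is in the packaging. The paper argues by contradiction: it fixes $u=\lfloor\eps_0/(4\eps)\rfloor$, assumes $\lambda_\sigma^{-1}|h'|e^{\sigma\varphi\circ h}>\rho^{-3}$, and amplifies to derive $|h'|e^{\eps_0\varphi\circ h}\geq C_3$, contradicting \eqref{eq:sum}. Your H\"older-type interpolation
\[
|h'|e^{\sigma\varphi_n\circ h}=|h'|^{1-\sigma/\eps_0}\bigl(|h'|e^{\eps_0\varphi_n\circ h}\bigr)^{\sigma/\eps_0}
\]
is more direct and makes the dependence on $\sigma$ completely explicit, so that the continuity argument at $\sigma=0$ is transparent. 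The contradiction route has the minor advantage of never needing to track the constant $C_3^{\sigma/\eps_0}$ explicitly, but your version is arguably cleaner and gives the same conclusion with the same hypotheses.
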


\begin{remark}
Without assumption \eqref{eq:sum} (i.e., without the exponential tail assumption), we still have 
$$
\sup_{h \in \H_n} \sup_{x \in \dom(h)} |h'(x)| e^{\sigma \varphi_n \circ h(x)}  \leq \rho^{-3n}
$$ 
for $-\eps < \sigma \leq 0$. 
\end{remark}

\begin{proof} We start with $n = 1$.
By continuity of $\lambda_{\sigma}$, we can take $\eps$ so small
that $\lambda_{\sigma}^{4u} \rho_0^{u-1} > C_3$ for $u = \lfloor \eps_0/(4\eps)\rfloor$ with $\eps_0\in (0,1)$ and $C_3$ such that \eqref{eq:sum} hold.
For $h \in \H_1$ assume by contradiction that
$\lambda_{\sigma}^{-1} |h'(x)| e^{\sigma \varphi \circ h(x)} > \rho^{-3}$
for some $x \in \dom(h)$.
Since $|h'| \leq \rho_0^{-1} = \rho^{-4}$ we have
$$
\lambda_{\sigma}^{-1} e^{\sigma \varphi \circ h(x)}
\geq \lambda_{\sigma}^{-1} \rho^4 |h'| e^{\sigma \varphi \circ h(x)} >
\rho = \rho_0^{1/4} \geq |h'|^{-1/4}.
$$
Therefore,
\begin{eqnarray*}
|h'| e^{\eps_0 \varphi \circ h} &>& |h'| e^{4u\eps \varphi \circ h} 
\geq |h'| e^{4u\sigma  \varphi \circ h} 
\geq |h'| (\lambda_{\sigma}^{-1} e^{\sigma  \varphi \circ h} )^{4u} \lambda_{\sigma}^{4u}\\
&\geq& |h'|^{1-u} \lambda_{\sigma}^{4u} \geq \rho_0^{u-1} \lambda_{\sigma}^{4u} \geq C_3
\end{eqnarray*}
contradicting \eqref{eq:sum}. The statement for $n \geq 1$
follows immediately.
\end{proof}

Let 
$$
\tilde \L_s v = \frac{1}{\lambda_{\sigma} f_{\sigma}} \L_s( f_{\sigma} v)
\quad \text{ and }\quad
\tilde \L_{\sigma} v = \frac{1}{\lambda_{\sigma} f_{\sigma}} \L_{\sigma}( f_{\sigma} v)
$$
be the \emph{normalized} versions of
$\L_s$ and $\L_{\sigma}$.

\begin{prop}[Lasota-Yorke type inequality] \label{prop-LYineq}
Choose $k$ and $\eps_1\in (0,1)$ such that
\eqref{eq:rho3} and ~\eqref{eq:rho2} hold.
Define $\Lambda_{\sigma} = \lambda_{2\sigma}^{1/2} / \lambda_{\sigma}$.
Then, there exist $\eps \leq \eps_1$, $\rho> 1$ and $c>0$  such that
for all $s=\sigma+ib$ with $|\sigma|<\eps$ and $b\in\R$,
\[
\Var_Y (\tilde \L_s^{nk} v) 
\leq\rho^{-nk} \Var_Y v + c(1+ |b|)\Lambda_{\sigma}^{nk}(\|v\|_\infty\|v\|_1)^{1/2}.
\]
for all $v\in \BV(Y)$ and all $n\geq 1$.
\end{prop}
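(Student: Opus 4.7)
\textbf{Step 1 (Reduction to $\L_s^{nk}(f_\sigma v)/\lambda_\sigma^{nk}$).} Since $f_\sigma,1/f_\sigma\in\BV$ with $\inf f_\sigma>0$ (Remark~\ref{rem:lambda}), the BV product rule $\Var(uw)\leq\|u\|_\infty\Var w+\|w\|_\infty\Var u$ gives
\[
\Var_Y(\tilde\L_s^{nk}v)\leq\bigl\|\tfrac1{f_\sigma}\bigr\|_\infty\tfrac{\Var_Y\L_s^{nk}(f_\sigma v)}{\lambda_\sigma^{nk}}+\Var\bigl(\tfrac1{f_\sigma}\bigr)\tfrac{\|\L_s^{nk}(f_\sigma v)\|_\infty}{\lambda_\sigma^{nk}},
\]
so it suffices to bound these two quantities with constants independent of $n$.

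\textbf{Step 2 (Branch-wise variation, $\Var v$ piece).} Decompose $\L_s^{nk}(f_\sigma v)=\sum_{h\in\H_{nk}}g_h$ with $g_h:=e^{s\varphi_{nk}\circ h}|h'|(f_\sigma v)\circ h$ supported on $F^{nk}(a_h)$. The total variation splits into $\sum_h\Var_{F^{nk}(a_h)}(g_h)$ plus jumps at endpoints of the image partition of $F^{nk}$. Expand each interior variation by the product rule over its four smooth factors; the only piece carrying $\Var v$ is $\Var_{a_h}v$, weighted by $\sup|h'|e^{\sigma\varphi_{nk}\circ h}/\lambda_\sigma^{nk}\leq\rho^{-3nk}$ from \eqref{eq:rho2}. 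Summing over $h$ and attaching the $f_\sigma$-constants from Step~1 produces $\rho^{-3nk}(\sup f_\sigma+\Var f_\sigma)(1/\inf f_\sigma+\Var(1/f_\sigma))\Var_Yv$, which by \eqref{eq:rho3} is at most $\rho^{-nk}\Var_Yv$. This yields the first term of the Lasota--Yorke inequality.

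\textbf{Step 3 (Cauchy--Schwarz producing $\Lambda_\sigma^{nk}$).} The remaining contributions are (i) the product-rule pieces involving $\Var(e^{s\varphi_{nk}\circ h})$, $\Var|h'|$, and $\Var(f_\sigma\circ h)$, which carry $\|v\circ h\|_\infty$ times factors $|b|C_2'+|\sigma|C_2'$ (from \eqref{eq:C2}), $C_1$ (from \eqref{eq:adler}), or $\Var f_\sigma$; and (ii) the boundary jumps $|g_h(\xi)|$, which I bound by $\frac{1}{\Leb F^{nk}(a_h)}\int_{F^{nk}(a_h)}|g_h|\,d\Leb+\Var_{F^{nk}(a_h)}(g_h)$, with the variation piece reabsorbed into Step~2 and the integral piece becoming $\int_{a_h}e^{\sigma\varphi_{nk}}|f_\sigma v|\,d\Leb$ after the change of variables $y=h(x)$. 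Both types of contributions collapse to bounding $\sum_h|h'(x)|e^{\sigma\varphi_{nk}\circ h(x)}|v\circ h(x)|$ at chosen points $x$. I would apply Cauchy--Schwarz in the $h$-sum:
\[
\sum_h|h'|e^{\sigma\varphi_{nk}\circ h}|v\circ h|\leq\bigl(\L_{2\sigma}^{nk}\mathbf{1}\bigr)^{1/2}\bigl(\L_0^{nk}|v|^2\bigr)^{1/2}\leq\bigl(C\lambda_{2\sigma}^{nk}\bigr)^{1/2}\|v\|_\infty^{1/2}\bigl(\L_0^{nk}|v|\bigr)^{1/2},
\]
using $\L_{2\sigma}^{nk}\mathbf{1}\leq C\lambda_{2\sigma}^{nk}$ (valid for small $|\sigma|$ by Proposition~\ref{prop-continuity} together with the spectral data of Remark~\ref{rem:lambda}). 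Dividing by $\lambda_\sigma^{nk}$ produces exactly the prefactor $\Lambda_\sigma^{nk}=\lambda_{2\sigma}^{nk/2}/\lambda_\sigma^{nk}$; averaging $(\L_0^{nk}|v|)^{1/2}$ against Lebesgue (or against atoms of $\P_k$, for the boundary contributions) and using $\int\L_0^{nk}|v|\,d\Leb=\|v\|_1$ by duality gives the $\|v\|_1^{1/2}$, combining with the earlier $\|v\|_\infty^{1/2}$ to deliver $(\|v\|_\infty\|v\|_1)^{1/2}$. Together with the $(1+|b|)$ factor from item~(i), this yields the second term of the claimed bound.

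\textbf{Main obstacle.} The delicate point is the non-Markov boundary bookkeeping: the image partition of $F^{nk}$ refines with $n$, so the number of interior jumps of $\sum_hg_h$ grows linearly in $n$, and each jump involves a denominator $\Leb F^{nk}(a_h)$. Controlling these denominators uniformly in $n$ is where assumption~\eqref{eq:k} enters: it forces atoms of $\P_k$ to have length at least a constant times $\rho^{-k}$, so that the $L^1$-averaging above is uniformly valid. The linear-in-$n$ proliferation of boundary points is then absorbed by the exponential factor $\rho^{-3nk}$ of Step~2, leaving sufficient slack to reach the target $\rho^{-nk}$. This non-Markov bookkeeping, paired with the Cauchy--Schwarz trick producing $\Lambda_\sigma^{nk}$, is the main technical ingredient beyond the classical Markov Lasota--Yorke argument.
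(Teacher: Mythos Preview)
Your overall architecture---product rule to split off $1/f_\sigma$, branch-wise variation estimate producing the $\rho^{-nk}\Var_Y v$ term via \eqref{eq:rho2} and \eqref{eq:rho3}, and Cauchy--Schwarz in the $h$-sum to manufacture $\Lambda_\sigma^{nk}(\|v\|_\infty\|v\|_1)^{1/2}$---matches the paper. The Cauchy--Schwarz factorisation you write is essentially the same one the paper uses on $\int(\tilde\L_\sigma^{nk}|v|)^2\,d\Leb$.

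The gap is in your boundary bookkeeping, and it stems from working directly at level $nk$ rather than at the fixed level $k$. Your bound $|g_h(\xi)|\leq\frac{1}{\Leb F^{nk}(a_h)}\int|g_h|+\Var g_h$ produces denominators $\Leb F^{nk}(a_h)$, and you claim these are controlled by assumption \eqref{eq:k}. They are not: \eqref{eq:k} bounds below the atoms of $\P_k$ for the \emph{fixed} $k$, whereas $F^{nk}(a_h)$ is an interval with endpoints in $X_{nk}$, and the paper explicitly does \emph{not} assume the finite image property holds uniformly in the iterate (see the paragraph preceding \eqref{eq:k}). So $\min_{a\in\alpha^{nk}}\Leb F^{nk}(a)$ may shrink with $n$ in a way \eqref{eq:k} says nothing about. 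Your further claim that the linear-in-$n$ growth of boundary points is absorbed by the $\rho^{-3nk}$ of Step~2 does not help either: the integral piece of the boundary term carries no such contracting factor.

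The paper avoids this entirely by proving the single-step inequality at level $k$---where the finite image property for $F$ gives $c_0:=\min_{a\in\alpha^k}\Leb F^k(a)>0$ directly, since $X_k$ is finite---obtaining
\[
\Var_Y\tilde\L_s^k v\leq\rho^{-k}\Var_Y v+C(1+|b|)\int_Y\tilde\L_\sigma^k|v|\,d\Leb,
\]
and only \emph{then} iterating. At each iteration the weak term becomes $\int\tilde\L_\sigma^{jk}|v|\,d\Leb$, which is bounded by $\Lambda_\sigma^{jk}(\|v\|_\infty\|v\|_1)^{1/2}$ via your Cauchy--Schwarz; summing the geometric series in $\rho^{-k}$ yields the claim. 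In particular, \eqref{eq:k} is \emph{not} used anywhere in the proof of Proposition~\ref{prop-LYineq}; it is needed only later for the cone invariance and jump-size control.
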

Proposition~\ref{prop-LYineq} would be meaningless if $\Lambda_\sigma < 1$,
but one can check that $1 \leq \Lambda_\sigma = 1 + O(\sigma^2)$.
The proof of Proposition~\ref{prop-LYineq} is deferred to Appendix~\ref{sec-LY}.

In what follows we focus on the controlling the term  containing $(\|v\|_\infty\|v\|_1)^{1/2}$ and proceed as in \cite{BalVal}: we estimate the $L^2$ norm
of $\tilde \L_s^n$ for $n$ large enough. Once we obtain a good estimate for the $L^2$ norm, we combine it with the estimate in
Proposition~\ref{prop-LYineq} (following the pattern in \cite{AM, AGY, BalVal}) to prove Theorem~\ref{th-main}.

\section{New ingredients of the proof}
\label{sec-newingr}

The basic strategy of the proof using the cancellation lemma follows
\cite{AM, AGY, BalVal}.
For the non-Markov AFU maps, we use the space $\BV$, and hence 
observables $u,v \in \BV$ can have jumps.
The task is to locate and control the sizes of these jumps.
Given a discontinuity point $x$ for a function $v$, we define the {\em size of the jump} at
$x$ as
\begin{equation}\label{eq:jumpsize}
\Size v(x) = \lim_{\delta \to 0} \sup_{\xi, \xi' \in (x-\delta, x+\delta) } |v(\xi)-v(\xi')|.
\end{equation}
Recall that the oscillation of a function $v:I \to \C$ on a subinterval $I \subset Y$ is defined as
$$
\Osc_I v = \sup_{\xi, \xi' \in I} |v(\xi) -v(\xi')|.
$$
It follows that
\begin{equation}\label{eq:oscsize}
\Osc_I v \leq \Osc_{I^\circ} v + \Size v(x) + \Size v(y)
\end{equation}
for $I = [x,y]$ with interior $I^\circ$.
For positive functions, \eqref{eq:jumpsize} reduces to
\begin{equation}\label{eq:jumpsize2}
\Size u(x) = \limsup_{\xi \to x} u(\xi) - \liminf_{\xi \to x} u(\xi)
= |\lim_{\xi \uparrow x} u(\xi) - \lim_{\xi \downarrow x} u(\xi)|.
\end{equation}
We adopt the convention $u(x) =\limsup_{\xi \to x} u(\xi)$ at discontinuity points, so we always have the trivial inequality $\Size u(x) \leq u(x)$.

\begin{defn}\label{def:expjumpsize}
Let $k\geq 1$ such that~\eqref{eq:k} holds
and take $C_7$ as in Lemma~\ref{lem:jump_fsigma}.
We say that a pair of functions $u,v \in \BV(Y)$ with $|v| \leq u$ and $u>0$ has 
{\em exponentially decreasing jump-sizes}, if  
the discontinuities of $u$ and $v$
belong to $X_\infty = \cup_{j\geq 1} X'_j$ and if $x \in X'_j$ for $j > k$
is such a discontinuity, then 
\begin{equation}\label{eq:EJ}
\Size v(x) , \Size u(x) \leq C_7 \rho^{-j} u(x).
\end{equation}
\end{defn}

\begin{example} For the reader's convenience, we provide a simple example of functions $(u,v)$ with exponentially decreasing jump-sizes.
Assume that $Y = [p,q]$.
Let $\{a_i\}_{i \geq 1}$ be a sequence in $\C$ such that $|a_i| \to 0$ exponentially fast, and $\{x_i\}_{i \geq 1} \subset [p, q]$.
Then
$$
v = \sum_{i \geq 1} a_i 1_{[x_i,q]} \qquad  u =  \sum_{i \geq 1} |a_i| 1_{[x_i,q]}
$$
is a pair of functions having exponentially decreasing jump-sizes
when $X'_j = \{ x_j\}$.
Indeed, let $\delta' > 0$ be arbitrary and let $N \in \N$ be such that
$\sum_{i > N} |a_i| \leq \delta'$.
Assuming for simplicity that the $x_i$ are distinct, we have
\begin{align*}
\Size v(x_j) & = \lim_{\delta \to 0} \sup_{\xi, \xi' \in (x_j-\delta, x_j+\delta)} 
\left| \sum_{i\geq 1} a_i \Big(1_{[x_i,q]}(\xi) - 1_{[x_i,q]}(\xi')\Big) \right| \\
& \leq \lim_{\delta \to 0} \sup_{\xi, \xi' \in (x_j-\delta, x_j+\delta)} 
\left| \sum_{i=1}^N a_i\Big( 1_{[x_i,q]}(\xi) - 1_{[x_i,q]}(\xi')\Big) \right|
+ \delta' = |a_j| +  \delta'.
\end{align*}
Since $\delta'$ was arbitrary, $\Size v(x_j) \leq |a_j|$.
So, $\Size v(x_j)$
is exponentially small in $j$.
On the other hand, if $x \notin \{ x_i\}_{i \in \N}$, then $v$ is continuous at $x$, so $\Size v(x) = 0$.
A similar computation holds for $\Size u(x_j)$.
\end{example}

Definition~\ref{def:expjumpsize} states that the discontinuities of $(u,v)$ can only appear in $X_\infty := \cup_{j \geq 1} X'_j$, and we will see in 
Proposition~\ref{prop:inductive_jumpsize} that this property is preserved
under $(u,v) \mapsto (\tilde\L_{\sigma}^n u, \tilde\L_s^n v)$.
For a given $n$, we will distinguish between two types of discontinuities
of $\tilde \L_{\sigma}^n u$.\\
{\bf (i)} {\em Created} discontinuities.
In this case $x \in \partial\dom(h)$ for some
$ h \in \H_n$ and $x \in X'_j$ for some $1 \leq j \leq n$. 
The discontinuity is created because 
the sum $\sum_{h \in \H, \xi \in \dom(h)}$ involved
in $\tilde \L_{\sigma}^n u$ runs over a different collection of inverse branches depending
on whether $\xi$ is close to the left or close to the right of $x$:
in only one of the cases $h$ is part of this collection.
It is not important whether the function $u$ is continuous
at $y = h(x)$.\\
{\bf (ii)} {\em Propagated} discontinuities.
Here the function $u:Y\to\R_{+}$ has discontinuities. Hence, it is discontinuous at $y = h(x)$ for some $h \in \H_n$.
In this case $y \in X'_j$ for some $j \geq 1$ and hence $x \in X'_{j+n}$.

Consequently, we define a cone $\CC_b$ of $\BV$ functions with
discontinuities  of the type prescribed in Definition~\ref{def:expjumpsize}.
In Appendix~\ref{sec:Hofbauer},
we prove that the eigenfunction $f_{\sigma}$ and $1/f_\sigma$
belong to $\CC_b$.
This argument is independent of Section~\ref{sec-invcone}
where the invariance of $\CC_b$ under the transformation
$(u,v) \mapsto (\tilde \L_{\sigma}^n(\chi u), \tilde \L_s^nv)$ is proved.
This invariance depends crucially on
Proposition~\ref{prop:inductive_jumpsize}, which
 together with an inductive bound
on $\frac{\sup u|_p}{\inf u|_p}$
for $p \in \P_k$ and assumption 
\eqref{eq:k} imply that discontinuities indeed behave as outlined 
in this section.
To deal with  \BV\ observables $v \notin \CC_b$, we exploit
the fact that
the size of
discontinuities at points $x \notin X_\infty$ decrease
exponentially under iteration of $\tilde \L_s$.
This means that $\tilde \L_s^nv$ converges exponentially
fast to $\CC_b$ and this suffices to prove the results for 
arbitrary \BV\ observables.

\section{Towards the cone condition: discontinuities and jump-sizes}
\label{sec-disc-jumps}

Recall the sets $X'_j$ from Section~\ref{sec:finite_image}
and let $k$ satisfy the conditions in Subsection~\ref{subsec-standhyp}.
To deal with the discontinuities of $(u,v)$, we introduce 
the ``extra term'' for intervals $I \subset Y$:
\begin{equation}\label{eq:E}
E_I(u) := \sum_{j > k} \rho^{-j} \sum_{x \in X'_j \cap I^\circ} 
\limsup_{\xi \to x} u(\xi),
\end{equation}
where we recall that $\# X'_j \leq N_1$ for all $j \geq 1$.
The choice of $k$ in \eqref{eq:kE} implies that
$C_8 E_I(u) \leq \frac{1}{12} \sup_I u$ for every $I$ contained in a single atom of $\P_k$.

Throughout this and the next section we set $n = 2k$.
We start with two lemmas on the properties of the eigenfunction $f_{\sigma}$,
which will be proved in Section~\ref{sec:Hofbauer}. We recall (see Remark 1.4) that $f_{\sigma}$ is the positive eigenfunction of $\L_{\sigma}$  with eigenvalue $\lambda_{\sigma}$.

\begin{lemma}\label{lem:jump_fsigma}
There are $C_6, C_7 \geq 1$ 
such that for all $\sigma$ with $|\sigma|<\eps$
the following holds:
\begin{enumerate} 
\item $f_{\sigma}$ has discontinuities only in
$X_\infty$, and if $x_j \in X'_j$, then
$\Size f_{\sigma}(x_j) \leq C_7 \rho^{-3j} \sup f_{\sigma}$.
\item For every interval $I \subset Y$ we have
$$
\Osc_{I^\circ}(f_{\sigma}) \leq C_6 \Leb(I) \inf_I f_{\sigma}  +  C_7 E_I(f_{\sigma})
\ \text{ and } \
\Osc_{I^\circ}\Big(\frac{1}{f_{\sigma}}\Big) \leq C_6 \Leb(I)\inf_I \frac{1}{f_{\sigma}}  +
 C_7 E_I\Big(\frac{1}{f_{\sigma}}\Big).
$$
\end{enumerate}
\end{lemma}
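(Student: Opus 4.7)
My plan is to argue by induction on $j$ using the eigenfunction identity $\lambda_\sigma^n f_\sigma = \mathcal{L}_\sigma^n f_\sigma$ for a suitable $n = n(j)$. Expanding the right side,
\[
\lambda_\sigma^n f_\sigma(x) = \sum_{h \in \H_n,\, x \in \dom(h)} e^{\sigma \varphi_n(h(x))} |h'(x)|\, f_\sigma(h(x)),
\]
discontinuities at $x$ split into \emph{created} ones (where $x \in \partial\dom(h) \subset X_n \subset X_\infty$) and \emph{propagated} ones (coming from a jump of $f_\sigma$ at the preimage $h(x) \in X'_{j''}$, which forces $x = F^n(h(x)) \in X'_{j''+n} \subset X_\infty$). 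In either case the discontinuity lies in $X_\infty$, establishing the location claim of part (1). The same statement for $1/f_\sigma$ follows since $\inf f_\sigma > 0$ forces its discontinuities to coincide with those of $f_\sigma$.

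For the size bound in part (1), I would iterate the per-branch estimate $\lambda_\sigma^{-n}|h'(x)| e^{\sigma \varphi_n(h(x))} \leq \rho^{-3n}$ from Lemma~\ref{lem:eps}, controlling each type of jump at $x \in X'_j$. Created jumps contribute at most $(\text{number of inverse branches with } x \in \partial \dom(h)) \cdot \rho^{-3n} \sup f_\sigma$, with the count bounded via the AFU combinatorics and $\#X'_k \leq N_1$; propagated jumps contribute inductively at most $\sum_{j'' \geq \max(1, j - n)} N_1 \rho^{-3n} C_7 \rho^{-3 j''} \sup f_\sigma$, which is geometrically summable and much smaller than $C_7 \rho^{-3j} \sup f_\sigma / 2$ once $n$ is chosen sufficiently large (order $j + O(1)$) depending on $j$. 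Choosing $C_7$ large enough to absorb the created term then closes the induction. The bound for $1/f_\sigma$ follows from $\Size(1/f_\sigma)(x) \leq \Size f_\sigma(x)/(\inf f_\sigma)^2$, together with Remark~\ref{rem:lambda}.

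For part (2), enumerate the discontinuities of $f_\sigma$ in $X_\infty \cap I^\circ$ as $\xi_1 < \cdots < \xi_m$ with $\xi_i \in X'_{j_i}$, and subdivide $I^\circ$ into continuous pieces $I_i = (\xi_{i-1}, \xi_i)$. On each $I_i$, the eigenfunction equation together with Adler's condition \eqref{eq:adler} gives the standard bounded-distortion estimate $\Osc_{I_i} f_\sigma \leq C_6 \Leb(I_i) \inf_{I_i} f_\sigma$ (cf.\ \cite{Zwei0, Zwei}). Using \eqref{eq:oscsize},
\[
\Osc_{I^\circ} f_\sigma \leq \sum_i \Osc_{I_i} f_\sigma + \sum_i \Size f_\sigma(\xi_i) \leq C_6 \Leb(I) \inf_I f_\sigma + C_7 \sum_i \rho^{-3 j_i} \sup f_\sigma,
\]
and the last sum is bounded by $C_7 E_I(f_\sigma)$ because for $j_i > k$ one has $\rho^{-3 j_i} \sup f_\sigma \leq \rho^{-j_i} \limsup_{\xi \to \xi_i} f_\sigma$ (using $\rho^{-2 j_i} (\sup f_\sigma / \inf f_\sigma) \leq 1$ from the choice of $k$ in Subsection~\ref{subsec-standhyp}), while the finitely many $j_i \leq k$ contributions can be absorbed into the first term after enlarging $C_6$. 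The argument for $1/f_\sigma$ is identical.

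The main obstacle is the bookkeeping in the induction for part (1): one must choose $n = n(j)$ to balance the decay $\rho^{-3n}$ against the count of inverse branches with $x \in \partial \dom(h)$, and ensure that $C_7$ and the auxiliary constants are uniform in $\sigma$ for $|\sigma|$ small. This counting becomes transparent on the Hofbauer tower extension of $F$ used in Appendix~\ref{sec:Hofbauer}, where the Markov structure makes the branch counting immediate and the per-level decay $\rho^{-3j}$ follows directly from the spectral estimate on the lifted transfer operator, uniformly in $\sigma$.
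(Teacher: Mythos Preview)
Your inductive strategy applied directly to $f_\sigma$ has two genuine gaps.

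First, the ``created jump'' estimate does not close. You bound the created contribution at $x\in X'_j$ by (branch count)\,$\times\,\rho^{-3n}\sup f_\sigma$. But for $n\geq j$ the branches $h\in\H_n$ with $x\in\partial\dom(h)$ factor as $h'\circ\tilde h$ with a \emph{single} $\tilde h\in\H_{j-1}$ (mapping $x$ to some $y\in X'_1$) composed with \emph{all} $h'\in\H_{n-(j-1)}$ whose range lies in the ``missing'' first-level cylinder at $y$; this count grows like the number of $(n-j)$-cylinders and is exponentially large in $n-j$. Multiplying an exponential count by the per-branch bound $\rho^{-3n}$ yields only an $O(1)$ bound, not $C_7\rho^{-3j}$. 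The paper avoids this by \emph{not} counting branches: it factors the sum and uses the aggregate eigenfunction estimate $\lambda_\sigma^{-(r-j+1)}\sum_{h'\in\H_{r-(j-1)}}|h'(y)|e^{\sigma\varphi_{r-(j-1)}\circ h'(y)}\leq \sup f_\sigma/\inf f_\sigma$ on the $h'$-sum, applying the per-branch bound $\rho^{-3(j-1)}$ only to the single factor $\tilde h$.

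Second, the induction is circular. Writing $\lambda_\sigma^n f_\sigma=\L_\sigma^n f_\sigma$ forces you to control the \emph{propagated} jumps, i.e., the jump sizes of $f_\sigma$ at all preimages $h(x)$. A preimage $h(x)\in X'_{j''}$ gives $x\in X'_{j''+n}$, and there is no reason $j''<j$ (a point can sit in several $X'_j$'s, cf.\ Remark~\ref{rem:multiple}); so the inductive hypothesis is not available. The paper sidesteps this entirely by iterating on the constant function $1$ rather than on $f_\sigma$: since $1$ is continuous, $\L_\sigma^r 1$ has \emph{only} created discontinuities, and $f_\sigma$ is recovered as the Cesaro limit $\lim_{n\to\infty}\frac1n\sum_{r=0}^{n-1}\lambda_\sigma^{-r}\L_\sigma^r 1$, inheriting the jump-size bounds uniformly in $r$. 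No Hofbauer tower is used.

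Your outline for part~(2) is close to the paper's argument, but note that absorbing the finitely many $j_i\le k$ jump contributions into the $C_6\Leb(I)$ term does not work as stated: those jump sizes are $O(\sup f_\sigma)$, not $O(\Leb(I))$. In practice the lemma is only applied to intervals $I$ contained in a single atom of $\P_k$ (so $X_k\cap I^\circ=\emptyset$), which is how the paper's proof and its applications proceed.
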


\begin{lemma}\label{lem:G}
Choose $k$ such that \eqref{eq:k} holds and set $n = 2k$.
Then there exists $\eps \in (0, 1)$ and $C_9 \in (0, 1)$ such that
$$
\lambda_{\sigma}^{-n} \inf_{x \in Y} 
\sum_{\stackrel{h \in \H_n, x \in \dom(h)}{\range(h) \subset p}} 
|h'(x)| e^{\sigma \varphi_n \circ h(x)} \geq C_9 \Leb(p)
$$ 
for all $p \in \P_k$ and $|\sigma| < \eps$.
\end{lemma}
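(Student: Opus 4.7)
Each $h \in \H_n$ is associated bijectively with an atom $a \in \alpha^n$ via $\range(h) = a$, so the restricted sum in the statement equals $\lambda_\sigma^{-n} \L_\sigma^n(1_{A_p})(x)$, where $A_p := \bigcup\{a \in \alpha^n : a \subset p\}$. Since atoms of $\alpha^n$ have diameter at most $|Y|\rho_0^{-n} = |Y|\rho^{-8k}$ by uniform expansion, while \eqref{eq:k} forces $\Leb(p) \gtrsim \rho^{-k}$, at most two atoms of $\alpha^n$ can straddle $\partial p$. Hence $\Leb(A_p) \geq \Leb(p)/2$ for $k$ large enough.

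\textbf{Distortion and mixing.} I would apply Adler's estimate \eqref{eq:adler} together with the derivative bound $|(\varphi_n \circ h_a)'| \leq C_2' := C_2\rho_0/(\rho_0-1)$ obtained from \eqref{eq:C2} and the geometric expansion $|(F^j|_a)'| \geq \rho_0^j$. This yields uniform bounded distortion of $|h_a'(\cdot)|e^{\sigma\varphi_n(h_a(\cdot))}$ on each $F^n(a)$ for $|\sigma|<\eps$ small, and the change of variables $\int_{F^n(a)}|h_a'(y)|e^{\sigma\varphi_n(h_a(y))}\,dy = \int_a e^{\sigma\varphi_n(z)}\,dz$ gives
$$
|h_a'(x)|e^{\sigma\varphi_n(h_a(x))} \gtrsim \frac{1}{\Leb(F^n(a))}\int_a e^{\sigma\varphi_n(z)}\,dz, \qquad x \in F^n(a).
$$
Since $n = 2k \geq 2k_1$, topological mixing implies that any atom $a \in \alpha^n$ with $|F^{n-k_1}(a)| \geq \delta_0$ satisfies $F^n(a) = F^{k_1}(F^{n-k_1}(a)) = Y$; call such $a$ \emph{good}. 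The crux of the proof, and what I expect to be the main obstacle, is showing that good atoms contained in $p$ have total Lebesgue measure at least a fixed fraction of $\Leb(p)$. This combinatorial estimate uses \eqref{eq:k} together with the finite image property (so that the $F^{n-k_1}$-image partition has at most $(n-k_1)N_1$ atoms), and bounded distortion applied across the decomposition $F^n = F^k \circ F^k$.

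\textbf{Conclusion.} Granting the previous step, summing the pointwise lower bound over good $a \subset p$, where $\Leb(F^n(a)) = |Y|$, gives
$$
\L_\sigma^n(1_{A_p})(x) \gtrsim \frac{1}{|Y|}\int_{A_p^{\text{good}}} e^{\sigma\varphi_n(z)}\,dz.
$$
To finish I would relate $\int_Q e^{\sigma\varphi_n}\,dz$ to $\lambda_\sigma^n \Leb(Q)$ using the eigenfunction identity $\L_\sigma^n f_\sigma = \lambda_\sigma^n f_\sigma$, the Lebesgue duality $\int_Y \L_\sigma^n 1_Y\,d\Leb = \int_Y e^{\sigma\varphi_n}\,d\Leb$, the sandwich $\inf f_\sigma \leq f_\sigma \leq \sup f_\sigma$ from Remark~\ref{rem:lambda}, and the pointwise bound of Lemma~\ref{lem:eps} to control the variation of $e^{\sigma \varphi_n}$. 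Together these yield $\int_{A_p^{\text{good}}} e^{\sigma\varphi_n}\,dz \asymp \lambda_\sigma^n \Leb(A_p^{\text{good}})$ uniformly for $|\sigma|<\eps$ small, and hence $\lambda_\sigma^{-n}\L_\sigma^n(1_{A_p})(x) \geq C_9 \Leb(p)$ for some $C_9 \in (0,1)$, as required.
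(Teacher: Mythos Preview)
Your identification of the main combinatorial obstacle --- that ``good'' atoms $a \in \alpha^n$ contained in $p$ (those with $F^n(a) = Y$) occupy a fixed fraction of $\Leb(p)$ --- is exactly right, and is what the paper proves separately as Lemma~\ref{lem:eta1}. Your distortion step reducing $|h_a'(x)|e^{\sigma\varphi_n\circ h_a(x)}$ to $\frac{1}{|Y|}\int_a e^{\sigma\varphi_n}\,dz$ is also fine.

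The gap is in your Conclusion. The assertion $\int_{A_p^{\text{good}}} e^{\sigma\varphi_n}\,dz \asymp \lambda_\sigma^n \Leb(A_p^{\text{good}})$ uniformly for small $|\sigma|$ is not delivered by the tools you list. Lemma~\ref{lem:eps} gives only an \emph{upper} bound on $\lambda_\sigma^{-n}|h'|e^{\sigma\varphi_n\circ h}$, and the duality $\int_Y \L_\sigma^n 1_{A}\,d\Leb = \int_A e^{\sigma\varphi_n}\,d\Leb$ is circular here: lower-bounding the right-hand side by $\lambda_\sigma^n\Leb(p)$ is precisely the pointwise estimate you are trying to prove, only integrated. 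The difficulty is that $\varphi_n$ is unbounded across the (possibly infinitely many) atoms in $A_p^{\text{good}}$, so for $\sigma<0$ the integrand $e^{\sigma\varphi_n}$ can be arbitrarily small on atoms carrying most of the mass, and no global comparison with $\lambda_\sigma^n$ is available.

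The paper sidesteps this entirely: it passes to a \emph{finite} subcollection $B^*_{j,p}$ of good atoms still carrying $\geq \tfrac{2}{3}\eta_1\Leb(p)$, and then uses continuity of $\sigma \mapsto \lambda_\sigma^{-n}|h'_{\tilde a}(z)|e^{\sigma\varphi_n\circ h_{\tilde a}(z)}$ at $\sigma=0$ (where it equals $|h'_{\tilde a}(z)|$) over this finite set to get $\lambda_\sigma^{-n}|h'_{\tilde a}|e^{\sigma\varphi_n\circ h_{\tilde a}} \geq \tfrac{3}{4}|h'_{\tilde a}|$ for $|\sigma|<\eps$. Summing $|h'_{\tilde a}(z)| \geq e^{-C_1}\Leb(\tilde a)/\Leb(F^n(\tilde a))$ over the finite collection then gives the bound directly, with no need to estimate $\int e^{\sigma\varphi_n}$. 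Your argument can be repaired by inserting exactly this finite-subcollection step before summing.
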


The main result in this section is the following.

\begin{prop}\label{prop:inductive_jumpsize}
Choose $k$ such that \eqref{eq:k} holds and set $n = 2k$.
If the pair $(u,v)$ with $|v| \leq u$ has exponentially decreasing 
jump-sizes \eqref{eq:EJ}, 
then for each $x \in X'_j$ with $j > k$, we have 
$$
\Size \tilde\L_{\sigma}^n u(x)\  ,\ \Size \tilde\L_s^n v(x)
\leq \frac14 \max_{p \in \P_k}\frac{\sup u|_p}{\inf u|_p}\  C_7 \rho^{-j} \tilde\L_{\sigma}^n u(x).
$$  
\end{prop}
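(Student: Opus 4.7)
The plan is to isolate three independent sources of jumps for $\tilde\L_s^n v(\xi) = \Phi_s(\xi)/(\lambda_\sigma^n f_\sigma(\xi))$, where $\Phi_s(\xi) := \sum_{h\in\H_n}e^{s\varphi_n\circ h(\xi)}|h'(\xi)|f_\sigma(h(\xi))v(h(\xi))\mathbf{1}_{\range(h)}(\xi)$. Applying the elementary product-rule estimate $\Size(AB)(x)\le |A(x)|\Size B(x)+\Size A(x)\cdot\sup_{\xi\to x}|B|$, the jump at $x\in X'_j$ decomposes into: (A) the jump of the prefactor $1/f_\sigma$ at $x$; (B) \emph{created} jumps from branches $h\in\H_n$ with $x\in\partial\range(h)$, which are relevant only when $j\le n$; and (C) \emph{propagated} jumps from branches for which $h(x)\in X_\infty$ is a discontinuity of $f_\sigma$ or $v$. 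The same decomposition applies verbatim to $\Size \tilde\L_\sigma^n u(x)$ with $u$ playing the role of $v$.

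I would bound each piece as follows. For (A), Lemma~\ref{lem:jump_fsigma}(1) gives $\Size(1/f_\sigma)(x)\le C_7\rho^{-3j}\sup f_\sigma/(\inf f_\sigma)^2$; combined with $|\Phi_s|\le\lambda_\sigma^n f_\sigma \tilde\L_\sigma^n u$ and the control of $\sup f_\sigma/\inf f_\sigma$ from Remark~\ref{rem:lambda}, this yields a contribution of order $\rho^{-3j}\tilde\L_\sigma^n u(x)$, well within one quarter of the target since $\rho^{-3j}\ll\rho^{-j}$ for $j>k$. For (B), Lemma~\ref{lem:eps} provides $g_h(x)/\lambda_\sigma^n\le\rho^{-3n}\sup f_\sigma$, so each boundary branch contributes at most $\rho^{-3n}\sup u|_{p_h}\cdot\sup f_\sigma/\inf f_\sigma$; the count of such branches is at most $N_1$ per boundary point by the finite image property. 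Then Lemma~\ref{lem:G} together with~\eqref{eq:k} yields the lower bound $\tilde\L_\sigma^n u(x)\ge 16 C_8\inf u\,\rho^{-k}$, so that the ratio of the created contribution to the target carries the factor $\rho^{-3n+j+k}\le\rho^{-n}$ (for $j\le n$, $n=2k$), made small by~\eqref{eq:kE}. For (C) I would group branches by the level $m$ with $h(x)\in X'_m$. The key structural observation, obtained by taking $j$ to be the minimal level with $x\in X'_j$, is that every such $m$ satisfies $m\ge j-n$, since $x=F^n(h(x))\in X'_{m+n}$ forces $j\le m+n$; moreover $\#\{h:h(x)\in X'_m\}\le N_1$, because each such preimage corresponds uniquely to an element of $X_1$ via $F^{n+m-1}$. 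For the levels $m>k$ where the cone bound $\Size v(h(x))\le C_7\rho^{-m}u(h(x))$ applies, the single-branch gain $\rho^{-3n}$ and the convergent tail $\sum_{m\ge\max(j-n,k+1)}N_1\rho^{-m}$ produce a contribution of order $\rho^{-j-2n}\sup u$; comparing against the Lemma~\ref{lem:G}/\eqref{eq:k} lower bound on $\tilde\L_\sigma^n u(x)$, the resulting ratio is $O(N_1\rho^{-2n+k})$, again rendered small by~\eqref{eq:kE}.

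The main obstacle is the residual subcase of (C) with $m\le k$, which can occur only for $k<j\le k+n$: here the cone imposes no exponential control on $\Size v(h(x))$, so the burden falls entirely on the $\rho^{-3n}$ single-branch decay from Lemma~\ref{lem:eps}. Since there are at most $n-k$ such levels and at most $N_1$ branches per level, this source adds only $O(nN_1\rho^{-3n}\sup u)$, which remains below $\rho^{-j}\tilde\L_\sigma^n u(x)\gtrsim\rho^{-(k+n)}\inf u$ by one further invocation of~\eqref{eq:kE}. The factor $\tfrac14$ in the conclusion reflects distributing the $C_7\rho^{-j}\tilde\L_\sigma^n u(x)$ budget among (A), (B), and the two halves of (C); the prefactor $\max_{p\in\P_k}(\sup u|_p/\inf u|_p)$ absorbs the ratio $\sup u/\inf u$ on a single $\P_k$-atom that appears when replacing the individual branch value $u(h(x))$ by $\sup u|_{p_h}$ in all three estimates.
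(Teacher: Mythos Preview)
Your decomposition into sources (A), (B), (C) is sound in spirit and matches the paper's strategy, but your treatment of (B) contains a genuine error. You assert that the number of branches $h\in\H_n$ with $x\in\partial\dom(h)$ is at most $N_1$ ``by the finite image property.'' This is false. The finite image property bounds $\#X'_1$, i.e., the number of distinct boundary \emph{points} of $\{F(a):a\in\alpha\}$; it does not bound the number of cylinders $a\in\alpha^n$ whose $F^n$-image has $x$ as an endpoint. Already for the $\beta$-transformation with $\beta=5/2$ one checks that the number of $h\in\H_n$ with $\dom(h)=[0,1/2)$ grows exponentially in $n$; for countably-branched AFU maps the count can be infinite. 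Since your estimate for (B) is (count)$\times\rho^{-3n}$, the argument collapses once the count is unbounded.

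The fix, which is what the paper does, is to exploit the structure of these branches rather than count them. Every $h\in\H_n$ with $x\in\partial\dom(h)$ (with the boundary created at level $j$) factors as $h=g\circ\tilde h$, where $\tilde h\in\H_{j-1}$ is the fixed inverse branch carrying $x$ to $y=\tilde h(x)\in X'_1$, and $g$ ranges over a subfamily of $\H_{n-j+1}$. One then pulls out the factor $\lambda_\sigma^{-(j-1)}|\tilde h'(x)|e^{\sigma\varphi_{j-1}\circ\tilde h(x)}\le\rho^{-3(j-1)}$ from Lemma~\ref{lem:eps}, and controls the remaining sum over $g$ using the normalisation $\tilde\L_\sigma 1=1$ (equivalently, the eigenvalue relation for $f_\sigma$), which bounds it by a constant times $u(y)\cdot\sup f_\sigma$ rather than by a branch count. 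This yields the required $\rho^{-j}$ decay, and the comparison with $\tilde\L_\sigma^n u(x)$ then proceeds exactly as you do via Lemma~\ref{lem:G} and~\eqref{eq:k}. Your parts (A) and (C) are essentially correct; in particular, the bound $\#\{h\in\H_n:h(x)\in X'_m\}\le N_1$ in (C) \emph{is} valid, because distinct inverse branches send $x$ to distinct points and $\#X'_m\le N_1$.
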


\begin{remark}\label{rem:multiple} 
It is possible that $x$ belongs to different $X'_j$'s at the same time. This means that the discontinuity at $x$ is propagated by different branches of $F$ (or $x \in X'_1 \cap X'_j$ for some $j \geq 2$, and the discontinuity
at $x$ is generated in $\P_1$ as well as propagated from another discontinuity at some point in $X'_{j-1}$).
In this case, we add the jump-sizes at $x$ but the proof remains
the same, \ie writing $x = x_j = x_{j'}$ for $x_j \in X'_j$ and $x_{j'} \in X'_{j'}$,
$\Size v(x) = \Size v(x_j) + \Size v(x_{j'}) \leq C_7 (\rho^{-j} + \rho^{-j'}) \| u \|_\infty$.
\end{remark}

\begin{proof}[Proof of Proposition~\ref{prop:inductive_jumpsize}]
By Lemma~\ref{lem:jump_fsigma}, we know that $f_{\sigma}$ and $1/f_{\sigma}$ have exponentially decreasing jump-sizes with parameters $C_7$ and $\rho^3$.

Let $y = \tilde h(x)$ for some $\tilde h \in \H_r$ and $r > k$ to be determined below.
Let $p \in \P_k$ such that $y \in \overline{p}$.
Then
\begin{eqnarray}\label{eq:Lnu}
\tilde \L_{\sigma}^r u(x) &\geq &
\frac{1}{\lambda_{\sigma}^r f_{\sigma}(x)} \sum_{\stackrel{h \in \H_r}{\range(h) \subset p}}
|h'| e^{\sigma \varphi_r \circ h(x)} (f_{\sigma} u)\circ h(x) \nonumber \\
&\geq & 
\frac{\inf f_{\sigma}}{f_{\sigma}(x)} \frac{\inf u|_p}{\sup u|_p}  u(y) 
 \lambda_{\sigma}^{-r} \sum_{\stackrel{h \in \H_r, x \in \dom(h)}{\range(h) \subset p}} |h'(x)| e^{\sigma \varphi_r \circ h(x)} \nonumber \\
&\geq & \frac{\inf f_{\sigma}}{f_{\sigma}(x)} \frac{\inf u|_p}{\sup u|_p} C_9 \Leb(p) u(y) 
\end{eqnarray}
by Lemma~\ref{lem:G}.

First take $j > n$ and $x \in X'_j$, so $x$ is a discontinuity
propagated from some $y \in X'_{j-n}$.
Let $\tilde h \in \H_n$
such that $\tilde h(x) = y$ be the corresponding inverse branch.
This is the only inverse branch that contributes to
$\Size \tilde \L_s^n v(x)$.
We compute using \eqref{eq:rho2} and Lemma~\ref{lem:jump_fsigma},
\begin{eqnarray}\label{eq:jump1}
&\Size& \!\!\!\!\tilde \L_s^n v(x) =
\Size \Big( |\tilde h'| e^{s \varphi_n \circ \tilde h} 
\frac{(f_{\sigma} v) \circ \tilde h}{\lambda_{\sigma}^n f_{\sigma}}\Big) (x) \nonumber \\
&\leq& \frac{1}{\lambda_{\sigma}^n} |\tilde h'(x)| e^{\sigma \varphi_n \circ \tilde h(x)} 
\Big( \frac{|v(y)|}{f_{\sigma}(x)} \Size f_{\sigma}(y)
+  f_{\sigma}(y) |v(y)| \Size \frac{1}{f_{\sigma}}(x) +
\frac{f_{\sigma}(y)}{f_{\sigma}(x)} \Size v(y) \Big) \nonumber \\
&\leq& 4\rho^{-3n}  \frac{\sup f_{\sigma}}{f_{\sigma}(x)}
 u(y) \times \begin{cases}
C_7  \rho^{-(j-n)} & \text{ if } j-n > k,\\
1 &  \text{ if } j-n \leq k.
\end{cases}
\end{eqnarray}
This distinction is because \eqref{eq:EJ} only holds for $j-n > k$;
for $j-n \leq k$ we only have the trivial bound $\Size v(y) \leq u(y)$.
The factor $4$ is to account for the three terms in the penultimate line above;
in particular, $\Size v(y) \leq 2 u(y)$, so the factor $4$ appears despite the presence of just three terms.
Since $\rho^{-2n} \leq \rho^{-4k}$,
we have 
\begin{equation}\label{eq:sizen}
\Size \tilde \L_s^n v(x) \leq \frac{4\sup f_{\sigma}}{\rho^{3k} f_{\sigma}(x)}
C_7 \rho^{-j} u(y)
\end{equation}
in either case.

Combining \eqref{eq:sizen} and \eqref{eq:Lnu} for $y = \tilde h(x)$ and $r=n$, and using the bound on $\Leb(p)$ from \eqref{eq:k} we obtain
$$
\Size\tilde \L_s^n v(x) \leq  
\frac{ 4C_7 }{C_9 \rho^{3k} \Leb(p)}
\frac{\sup u|_p}{\inf u|_p} 
\frac{\sup f_{\sigma}}{\inf f_{\sigma}} \ \rho^{-j} 
\tilde \L_{\sigma}^n u(x) 
\leq \frac14 \frac{\sup u|_p}{\inf u|_p}\  C_7 \rho^{-j} 
\tilde \L_{\sigma}^n u(x).
$$
Now take $k < j \leq n$, so the discontinuity at $x \in X'_j$ is created
by non-onto branches of $F^n$, and there exist $y \in X'_1$
and an inverse branch $\tilde h \in \H_{j-1}$ such that
$y = \tilde h(x)$. 
Then, analogous to \eqref{eq:jump1},
\begin{eqnarray*}
\Size \tilde \L_s^n v(x) &=&
\Size \Big( |\tilde h'| e^{s \varphi_{j-1} \circ \tilde h} 
\frac{(f_{\sigma} v) \circ \tilde h}{\lambda_{\sigma}^n f_{\sigma}}\Big) (x) \\
&\leq& \frac{1}{\lambda_{\sigma}^n} |\tilde h'(x)| e^{\sigma \varphi_{j-1} \circ \tilde h(x)} 
\frac{4\sup f_{\sigma}}{f_{\sigma}(x)} u(y) \\
&\leq& \frac{\rho^{-3(j-1)}}{\lambda_{\sigma}^{n-j+1} } 
\frac{4\sup f_{\sigma}}{f_{\sigma}(x)} u(y)
\leq \frac{4C_7 \sup f_{\sigma}}{\rho^k f_{\sigma}(x)} \ \rho^{-j} u(y)
\end{eqnarray*}
because $C_7 \geq 1$, $k < j \leq n$ and $\lambda_{\sigma}^{-4} \leq \rho$
by \eqref{eq-lamfsigma}.
Combining this with \eqref{eq:Lnu} to bound $u(y)$
(but applied to $r=j$) and \eqref{eq:k} gives
$$
\Size\tilde \L_s^n v(x) \leq  
\frac{ 4C_7 }{C_9 \rho^k \Leb(p)}
\frac{\sup u|_p}{\inf u|_p} 
\frac{\sup f_{\sigma}}{\inf f_{\sigma}} \ \rho^{-j} 
\tilde \L_{\sigma}^n u(x) 
\leq \frac14 \frac{\sup u|_p}{\inf u|_p}\  C_7 \rho^{-j} 
\tilde \L_{\sigma}^n u(x),
$$
as before. The computations for $\tilde\L_{\sigma}^n u$ are the same.
\end{proof}

\section{Cancellation lemma}
\label{sec-cance}

We define a cone of function pairs $(u,v)$:
\begin{align} \label{eq:cone}
\CC_b = \Big\{ (u ,v) \ : & \ 0 < u \ , \ 0 \leq |v| \leq u\ , (u,v) 
\text{ has exponentially decreasing }
\nonumber\\
& \text{ jump-sizes } \eqref{eq:EJ} \text{ and }
\Osc_I v \leq C_{10} |b| \Leb(I)\sup u|_I  + C_8 E_I(u), \\
& \text{ for all intervals $I$ contained in a single atom of } \P_k 
\Big\}. \nonumber
\end{align}
Recall that the choice of $k$ in \eqref{eq:kE} implies that
$C_8 E_I(u) \leq \frac{1}{12} \sup_I u$ for every $I$ contained in a single atom of $\P_k$.
In Section~\ref{sec-invcone} we show that $\CC_b$ is 'invariant' in the sense of~\cite{BalVal}: see Lemma~\ref{lemma-invcone}.

In this section we provide a cancellation lemma for pairs of functions in $\CC_b$ similar to the one in~~\cite{BalVal}.
The statement and proof of Lemma~\ref{lemma-cancellation} below follows closely the pattern of  the statements and proofs of 
~\cite[Lemma 2.4]{BalVal} and ~\cite[Lemma 2.9]{AM}.
In this section, we abbreviate
$$
A_{s,h,n}= e^{s\varphi_n\circ h} |h'| v\circ h
$$ 
for $h \in \H_n$ and $\varphi_n = \sum_{j=0}^{n-1} \varphi \circ F^j$.

\begin{lemma}\label{lemma-cancellation} Fix $k$ such that~\eqref{eq:k} holds.
Recall that $\eta_0=\frac{\sqrt 7-1}{2}\in (2/3,1)$. 
Assume that the UNI condition in Subsection~\ref{subsec-uni} holds
(with constant $D>0$, $k$ fixed and $n_0\geq 1$). 

Set $\Delta=\frac{2\pi}{D}$. There exists $\delta\in (0,\Delta)$ such that the following hold for all $|\sigma|<\eps$, $|b|>2\Delta$ and for all $(u,v)\in\CC_b$:

Let $p\in\P_k$ and let $h_1, h_2 \in \H_{n_0}$ be the branches from UNI.
For every $y_0\in p$ there exists $y_1\in B_{\Delta/|b|}(y_0)$ such that one of the following inequalities holds on $B_{\delta/|b|}(y_1)$:
\begin{itemize}
\item[Case $h_1$.] $|A_{s,h_1,n_0}(f_{\sigma} v)+A_{s,h_2,n_0}(f_{\sigma} v)|\leq \eta_0 A_{\sigma,h_1,n_0}(f_{\sigma} u)+ A_{\sigma,h_2,n_0}(f_{\sigma} u)$.

\item[Case $h_2$.] $|A_{s,h_1,n_0}(f_{\sigma} v)+A_{s,h_2,n_0}(f_{\sigma} v)|\leq  A_{\sigma,h_1,n_0}(f_{\sigma} u)+\eta_0 A_{\sigma,h_2,n_0}(f_{\sigma} u)$.
\end{itemize}
\end{lemma}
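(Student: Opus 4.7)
The plan is to adapt the cancellation lemma of Baladi--Vall\'ee \cite{BalVal} and Ara\'ujo--Melbourne \cite{AM} to the BV cone $\CC_b$. I will write
$$
U_j(x):=A_{\sigma,h_j,n_0}(f_\sigma u)(x),\qquad
r_j(x):=\frac{(v\circ h_j)(x)}{(u\circ h_j)(x)}
$$
(with the convention $r_j=0$ where $u\circ h_j(x)=0$), so that $|r_j|\leq 1$ and $A_{s,h_j,n_0}(f_\sigma v)(x)=U_j(x)e^{ib\varphi_{n_0}\circ h_j(x)}r_j(x)$. The relative phase of the two summands is then $\theta(x):=b\psi(x)+\arg(r_1(x)\overline{r_2(x)})$.

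Step 1 (sweep the phase). Since $|b|>4\pi/D$ and, by \eqref{eq:k}, the atoms of $\P_k$ are not too small, $B_{\Delta/|b|}(y_0)\subset p$. By UNI, $|b\psi'|\geq D|b|$, so $b\psi$ varies by at least $2\pi$ on $B_{\Delta/|b|}(y_0)$. The oscillation of $\arg(r_1\bar r_2)$ over this ball is much smaller: the cone bound for $(u,v)\in\CC_b$ controls $\Osc v$ and $\Osc u$, and pulling back through $h_j$ contracts distances by a factor $\rho_0^{-n_0}$ via Adler's estimate \eqref{eq:adler}. Consequently there is $y_1\in B_{\Delta/|b|}(y_0)$ at which $\theta(y_1)\equiv\pi\pmod{2\pi}$.

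Step 2 (case split on magnitudes). If $|r_1(y_1)|\leq \eta_0$, the same oscillation control lets me choose $\delta$ small enough that $|r_1|\leq \eta_0$ throughout $B_{\delta/|b|}(y_1)$; the triangle inequality and $|r_2|\leq 1$ then give Case $h_1$. The situation $|r_2(y_1)|\leq \eta_0$ is symmetric and yields Case $h_2$. In the remaining regime $|r_1(y_1)|,|r_2(y_1)|>\eta_0$ the identity $|z_1+z_2|^2=|z_1|^2+|z_2|^2+2|z_1||z_2|\cos\theta$ becomes effective: the calibration \eqref{eq-n0-1} is designed so that both $b\psi$ and $\arg(r_1\bar r_2)$ move by at most $\pi/24$ over $B_{\delta/|b|}(y_1)$, so $\cos\theta\leq -\cos(\pi/12)$ there. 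An elementary algebraic check, using the defining relation $(2\eta_0+1)^2=7$, converts this cancellation into $|z_1+z_2|\leq \eta_0 U_1+U_2$ (or the symmetric inequality, whichever of $U_1,U_2$ is larger receiving the factor $1$).

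The main obstacle is Step 1's oscillation bookkeeping. In the non-Markov BV setting $u$ and $v$ may have jumps inside the atom $p$, so the estimate on $\Osc\arg(r_1\bar r_2)$ must be split along the two-part budget of $\CC_b$: the smooth piece $C_{10}|b|\Leb(I)\sup u|_I$ and the jump piece $C_8 E_I(u)$. The condition \eqref{eq:kE} forces $C_8 E_I(u)\leq \tfrac{1}{12}\sup_I u$ on every $I$ contained in a single atom, and the exponentially decreasing jump-sizes \eqref{eq:EJ} show that the jumps inherited by the pullbacks through $h_1,h_2$ contribute a geometrically small amount in $n_0$. Once these jump errors are absorbed into the smooth budget, the phase-sweeping of Step~1 and the phase-cancellation of Step~2 go through as in the smooth Markov case of \cite{BalVal,AM}.
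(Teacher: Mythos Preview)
Your overall strategy is the paper's own adaptation of the Baladi--Vall\'ee/Ara\'ujo--Melbourne cancellation argument, and your handling of the BV jump budget $C_8E_I(u)$ via \eqref{eq:kE} is correct. But two concrete points in the logical structure need repair.

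\textbf{Reverse the order of Steps 1 and 2.} To bound $\Osc\arg(r_1\bar r_2)$ on $B_{\Delta/|b|}(y_0)$ in Step~1 you need $|r_j|$ bounded away from zero on that ball: the trigonometric fact you invoke (if $|z_1|,|z_2|\geq c$ and $|z_1-z_2|\leq c(2-2\cos\omega)^{1/2}$ then $|\arg z_1-\arg z_2|\leq\omega$) requires the lower bound $c$. If some $|r_j|$ is small on part of the ball, the argument can swing wildly and the phase sweep fails. The paper avoids this by making the magnitude dichotomy \emph{first}, on the small ball $B_{\delta/|b|}(y_0)$: if $\inf|v\circ h_m|\leq\tfrac12\sup(u\circ h_m)$ for some $m$, one is immediately done with $y_1=y_0$; only when both satisfy $\inf|v\circ h_m|>\tfrac12\sup(u\circ h_m)$ does one enlarge to $B_{(\delta+\Delta)/|b|}(y_0)$, propagate the lower bound there, and then sweep the phase.

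\textbf{The magnitude threshold must be strictly below $\eta_0$.} Splitting at $|r_1(y_1)|\leq\eta_0$ leaves no room for the oscillation of $r_1$ over $B_{\delta/|b|}(y_1)$: at the boundary value $|r_1(y_1)|=\eta_0$ you cannot force $|r_1|\leq\eta_0$ on any neighbourhood. The paper takes the threshold $\tfrac12$ and then uses the cone oscillation bound (which, after pullback through $h_m$ and \eqref{eq-n0-1}, contributes at most $\tfrac16\sup(u\circ h_m)$, with the jump piece already absorbed by \eqref{eq:kE}) to get $\sup|v\circ h_m|\leq\tfrac23\sup(u\circ h_m)<\eta_0\sup(u\circ h_m)$. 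Relatedly, the paper only obtains $|\theta-\pi|\leq\tfrac{2\pi}{3}$ on $B_{\delta/|b|}(y_1)$, i.e.\ $\cos\theta\leq\tfrac12$, not your $\cos\theta\leq-\cos(\pi/12)$; but $\cos\theta\leq\tfrac12$ is already exactly what the Baladi--Vall\'ee algebraic lemma needs for this particular $\eta_0=(\sqrt7-1)/2$.
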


\begin{proof} Choose $\delta \in (0, \Delta)$ sufficiently small such that
\begin{equation}\label{eq-delta}
\delta \frac{D}{16\pi}< \frac{1}{12}, \quad\quad C_0\delta<\frac{\pi}{6}.
\end{equation}
Let $y_0\in Y$. Note that for $m=1,2$,
\begin{align*}
\sup_{B_{\delta/|b|}(y_0)}|v\circ h_m|\leq \Osc_{B_{\delta/|b|}(y_0)}(v\circ h_m)+ \inf_{B_{\delta/|b|}(y_0)}|v\circ h_m|
+\Size v(B_{\delta/|b|}(y_0)).
\end{align*}
Since $(u,v)\in\CC_b$,
\begin{align*}
\sup_{B_{\delta/|b|}(y_0)}|v\circ h_m|\leq C_{10} \Leb(h_m(B_{\delta/|b|}(y_0)))|b|\sup_{B_{\delta/|b|}(y_0)}(u\circ h_m)&+
 \inf_{B_{\delta/|b|}(y_0)}|v\circ h_m|\\
&+ C_8 E_{B_{\delta/|b|}(y_0)}(u).
\end{align*}
But
\[
C_{10}\Leb(h_m(B_{\delta/|b|}(y_0)))\leq C_{10} \rho_0^{-n_0}\Leb(B_{\delta/|b|}(y_0))=C_{10}\rho_0^{-n_0}\frac{\delta}{|b|}\leq \frac{D}{16\pi}\frac{\delta}{|b|},
\]
where in the last inequality we have used~\eqref{eq-n0-1}.
Putting the above together with the estimate on $E_I(u)$ below equation~\eqref{eq:E}
and using the choice of $\delta$ and $k$,
\begin{align}\label{eq-1}
 \sup_{B_{\delta/|b|}(y_0)}|v\circ h_m|
\leq  \frac{1}{6} \sup_{B_{\delta/|b|}(y_0)}(u\circ h_m)+ \inf_{B_{\delta/|b|}(y_0)}|v\circ h_m|.
\end{align}
{\bf Case 1.} Suppose that
$\inf_{B_{\delta/|b|}(y_0)}|v\circ h_m|\leq  \frac{1}{2} \sup_{B_{\delta/|b|}(y_0)}(u\circ h_m)$ for $m=1,2$. Then~\eqref{eq-1}
implies that
\[
\sup_{B_{\delta/|b|}(y_0)}|v\circ h_m|\leq (\frac{1}{2}+\frac{1}{6})\sup_{B_{\delta/|b|}(y_0)}(u\circ h_m)=\frac{2}{3}\sup_{B_{\delta/|b|}(y_0)}(u\circ h_m)
<\eta_0\sup_{B_{\delta/|b|}(y_0)}(u\circ h_m).
\]
Thus, for $m=1,2$, $|A_{s,h_m,n_0}(f_{\sigma} v)(y)|\leq \eta_0 A_{\sigma,h_m,n_0}(f_{\sigma} u)(y)$
for all $y\in B_{\delta/|b|}(y_0)$. So, Case $h_m$ holds with $y_1=y_0$.
\\[2mm]
{\bf Case 2.} Suppose the reverse; that is, suppose that 
$\inf_{B_{\delta/|b|}(y_0)}|v\circ h_m|> \frac{1}{2} \sup_{B_{\delta/|b|}(y_0)}(u\circ h_m)$ for $m=1,2$. 

For $m=1,2$, write $A_{s,h_m,n_0}(f_{\sigma} v)(y)=r_m(y)e^{i\theta_m(y)}$. Let $\theta(y)=\theta_1(y)-\theta_2(y)$.
Choose $\delta$ as in~\eqref{eq-delta} and recall $\Delta=\frac{2\pi}{D}$.
A calculation~\cite[Lemma 2.3]{BalVal} shows that if $\cos\theta\leq 1/2$ then
$r_1e^{i\theta_1}+ r_2e^{i\theta_2}\leq\max\{\eta_0 r_1+r_2, r_1+\eta_0 r_2\}$.
Thus, the conclusion follows once we show that  $\cos\theta(y)\leq 1/2$, or equivalently $|\theta(y)-\pi|<2\pi/3$,
for all $y\in B_{\delta/|b|}(y_1)$ for some $y_1\in B_{\Delta/|b|}(y_0)$.
In what follows we show that $|\sup_{B_{\delta/|b|}(y_1)}\theta-\pi|<2\pi/3$, for some $y_1\in B_{\Delta/|b|}(y_0)$.

We start by restricting to  $B_{\xi/|b|}(y_0)$,  where $\xi=\delta+\Delta$.
Note that $\theta=V-b\psi$, where $\psi=\psi_{h_1,h_2}$ is the quantity defined in UNI and $V=\arg(v\circ h_1)-\arg(v\circ h_2)$.
We first estimate $\Osc_{B_{\xi/|b|}(y_0)}V$.
For this purpose, we recall a basic trigonometry result (also used in in~\cite{BalVal} and~\cite{AM}): if $|z_1|, |z_2|\geq c$
and $|z_1-z_2|\leq c(2-2\cos\omega)^{1/2}$ for $c>0$ and $|\omega|<\pi$ then $|\arg(z_1)-\arg(z_2)|\leq \omega$.

Since $(u,v)\in\CC_b$ and $\xi<4\pi/D$ for $m=1,2$,
we have by~\eqref{eq-n0-1}
\begin{align}\label{eq-sup1}
\nonumber\Osc_{B_{\xi/|b|}(y_0)}(v\circ h_m)&\leq C_{10} \rho_0^{-n_0}\frac{4\pi}{D}\sup_{B_{\xi/|b|}(y_0)}(u\circ h_m)\\
&\leq \frac{1}{4}(2-2\cos\frac{\pi}{12})^{1/2}\sup_{B_{\xi/|b|}(y_0)}(u\circ h_m).
\end{align}
Recalling the assumption of Case 2,
\begin{align}\label{eq-sup2}
\nonumber\sup_{B_{\xi/|b|}(y_0)}|v\circ h_m|&\geq 
\Big|\sup_{B_{\xi/|b|}(y_0)}|v\circ h_m|-\Osc_{B_{\xi/|b|}(y_0)}(v\circ h_m)\Big|\\
&\geq \frac{1}{2} \sup_{B_{\delta/|b|}(y_0)}(u\circ h_m)-\frac{1}{4}\sup_{B_{\xi/|b|}(y_0)}(u\circ h_m)=\frac{1}{4}\sup_{B_{\xi/|b|}(y_0)}(u\circ h_m).
\end{align}
By equations~\eqref{eq-sup1} and~\eqref{eq-sup2},
\[
\sup_{z_1, z_2 \in B_{\delta/|b|}(y_0)}
\Big|\arg(v\circ h_m(z_1))-\arg(v\circ h_m(z_2))\Big|\leq\frac{\pi}{12},
\]
and thus
\begin{align}\label{eq-oscV}
\Osc_{B_{\xi/|b|}(y_0)}V\leq \frac{\pi}{6}.
\end{align}

Next, recall the UNI assumption in Subsection~\ref{subsec-uni}. Note that for any $z\in B_{\Delta/|b|}(y_0)$,
\[
|b(\psi(z)-\psi(y))|\geq |b||z-y_0|\inf|\psi'|\geq D|b||z-y_0|=\frac{2\pi}{\Delta}|b||z-y_0|.
\]
Since $|b|>2\Delta$, the ball $B_{\Delta/|b|}(y_0)\subset Y$ contains an interval of length at least $\Delta/|b|$. Hence,
as $z$ varies in $B_{\Delta/|b|}(y_0)$,  it fills out an interval around $0$ of length at least $2\pi$
$b(\psi(z)-\psi(y))$. This means that we can choose $y_1\in B_{\Delta/|b|}(y_0)$ such that
\[
b(\psi(y_1)-\psi(y))=\theta(y_0)-\pi\mod 2\pi.
\]
Note that $\theta(y_0)-V(y_0)+b\psi(y_0)=0$. Using the above displayed equation,
\[
\theta(y_1)-\pi=V(y_1) -b\psi(y_1)-\pi+\theta(y_0)-V(y_0)+b\psi(y_0)=V(y_1)-V(y_0).
\]
Together with~\eqref{eq-oscV}, the above equation implies that $|\theta(y_1)-\pi|\leq \pi/6$.
Recalling $\sup_Y|\psi'|\leq C_0$ and our choice of $\delta$,
\begin{align*}
\Big|\sup_{B_{\delta/|b|}(y_1)}\theta-\pi\Big|&\leq \frac{\pi}{6}+\sup_{B_{\delta/|b|}(y_1)}\Big| \theta-\theta(y_1)\Big| \\
&\leq \frac{\pi}{6}+|b| \sup_{B_{\delta/|b|}(y_1)}\Big| \psi-\psi(y_1)\Big|
+\Osc_{B_{\delta/|b|}(y_1)} V+\Osc_{B_{\Delta/|b|}(y_0)}  V \\
& \leq \frac{\pi}{6}+C_0\delta+2\Osc_{B_{\xi/|b|}(y_0)}V \leq\frac{4\pi}{6}=\frac{2\pi}{3},
\end{align*}
which ends the proof.
\end{proof} 

Let $I^p$ be a closed interval contained in  an atom of $\P_k$ such that 
if Lemma~\ref{lemma-cancellation} holds on $B_{\delta/|b|}(y_1)$,
we also have $B_{\delta/|b|}(y_1)\subset I^p$. Write $type(I^p)=h_m$ if we are in case $h_m$. Then we can find finitely many disjoint intervals $I_j^p=[a_j, b_{j+1}]$,
$j=0,\dots, N-1$ (with $0=b_0 \leq a_0<b_1<a_1<\ldots<b_N\leq a_n=1$) of $type(I_j^p) \in \{h_1, h_2\}$ with $\diam(I_j^p)\in [\delta/|b|, 2\delta/|b|]$ and gaps
$J_j^p=[b_j,a_j]$, $j=0,\dots, N$ with $\diam(J_j^p)\in (0, 2\Delta/|b|]$.

Let $\chi:Y\to[\eta, 1]$,  with $\eta\in [\eta_0,1)$ be a $C^1$ function as 
constructed below (as in~\cite{AM, BalVal}):
\begin{itemize}
\item Let $p\in\P_k$, $h\in\H_n$ for $n\in\N$ and write $h|_p: p\to h(p)$. Set $\chi\equiv 1$ on $Y\setminus (h_1(p) \cup h_2(p))$.

\item On $h_1(p)$ we require that $\chi(h_1(y))=\eta$ for all $y$ lying in the middle third of an interval of type $h_1$ and that 
$\chi(h_1(y))=1$ for all $y$ not lying in an interval of type $h_1$.

\item On $h_2(p)$ we require that $\chi(h_2(y))=\eta$ for all $y$ lying in the middle third of an interval of type $h_2$ and that 
$\chi(h_2(y))=1$ for all $y$ not lying in an interval of type $h_2$.
\end{itemize}

Since $\diam (I_j^p)\geq\delta/|b|$, we can choose $\chi$ to be $C^1$ with $|\chi'|\leq \frac{3(1-\eta)|b|}{\delta P}$ where $P=\min_{m=1,2}\{\inf |h'_m|\}$.
From here on we choose $\eta\in [\eta_0,1)$ sufficiently close to $1$ so that $|\chi'|\leq |b|$.

Since $p\in\P_k$ is arbitrary in the statement of Lemma~\ref{lemma-cancellation} and the construction of $\chi$ above,
we obtain

\begin{cor}\label{cor-canc}
Let $\delta,\Delta$ be as in Lemma~\ref{lemma-cancellation}. Let $|b|\geq 4\pi/D$ and $(u,v)\in\CC_b$.
Let $\chi=\chi(b,u,v)$ be the $C^1$ function described above. Then $|\tilde \L_s^{n_0}v(y)|\leq \tilde \L_{\sigma}^{n_0}(\chi u)(y)$,
for all $s=\sigma+ib$, $|\sigma|<\eps$ and all $y\in Y$.
\end{cor}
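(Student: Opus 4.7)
The plan is to prove the pointwise bound $|\tilde\L_s^{n_0} v(y)| \leq \tilde\L_\sigma^{n_0}(\chi u)(y)$ by expanding both operators as sums over inverse branches $h \in \H_{n_0}$ with $y \in \dom(h)$, dividing each side by the common factor $\lambda_\sigma^{n_0} f_\sigma(y)$, and matching contributions branch by branch. Fix $y \in Y$ and let $p \in \P_k$ be the unique atom containing $y$, with $h_1 = h_1^{(p)}, h_2 = h_2^{(p)} \in \H_{n_0}$ the UNI pair for $p$. An application of the triangle inequality isolating the UNI pair gives
$$
\Big|\sum_h A_{s,h,n_0}(f_\sigma v)(y)\Big| \leq \big|A_{s,h_1,n_0}(f_\sigma v)(y) + A_{s,h_2,n_0}(f_\sigma v)(y)\big| + \sum_{h \neq h_1, h_2} \big|A_{s,h,n_0}(f_\sigma v)(y)\big|,
$$
so it suffices to dominate each of the two pieces by the corresponding piece of $\sum_h A_{\sigma,h,n_0}(f_\sigma \chi u)(y)$.

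For the UNI pair I would apply Lemma~\ref{lemma-cancellation} according to the tiling of $p$ by the intervals $I_j^p$ and gaps $J_j^p$. If $y$ lies in some $I_j^p$ of type $h_1$, the lemma gives $|A_{s,h_1}(f_\sigma v)(y) + A_{s,h_2}(f_\sigma v)(y)| \leq \eta_0 A_{\sigma,h_1}(f_\sigma u)(y) + A_{\sigma,h_2}(f_\sigma u)(y)$, and by construction of $\chi$ one has $\chi(h_1(y)) \geq \eta \geq \eta_0$ while $\chi(h_2(y)) = 1$ (since the intervals of different types are disjoint). Hence the right-hand side is bounded by $A_{\sigma,h_1}(f_\sigma \chi u)(y) + A_{\sigma,h_2}(f_\sigma \chi u)(y)$. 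The case of a type-$h_2$ interval is symmetric. If $y$ lies in a gap $J_j^p$ then $y$ avoids every type-$h_m$ interval, so $\chi(h_1(y)) = \chi(h_2(y)) = 1$, and the trivial bound $|v| \leq u$ together with $|e^{ib\varphi_{n_0}\circ h}| = 1$ suffices after one more application of the triangle inequality.

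For each remaining branch $h \in \H_{n_0} \setminus \{h_1^{(p)}, h_2^{(p)}\}$ with $y \in \dom(h)$, the trivial bound yields $|A_{s,h}(f_\sigma v)(y)| \leq A_{\sigma,h}(f_\sigma u)(y)$, which equals $A_{\sigma,h}(f_\sigma \chi u)(y)$ precisely when $\chi(h(y)) = 1$. The main obstacle is the bookkeeping needed to verify this equality: $\chi$ deviates from $1$ only on $\bigcup_{p',m} h_m^{(p')}(p')$, and the ranges of the inverse branches in $\H_{n_0}$ are pairwise disjoint. Thus either $h$ is not a UNI branch for any atom, in which case $\range(h)$ is disjoint from every $h_m^{(p')}(p')$ and $\chi \equiv 1$ on $\range(h)$; or $h = h_m^{(p')}$ for some atom $p' \neq p$, in which case $y \in p$ lies outside $p'$, so $h(y) = h_m^{(p')}(y)$ is outside $h_m^{(p')}(p')$, and again $\chi(h(y)) = 1$. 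Summing the three estimates over all branches and dividing by $\lambda_\sigma^{n_0} f_\sigma(y)$ concludes the proof.
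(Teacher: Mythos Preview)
Your proof is correct and fills in precisely the details that the paper leaves implicit: the paper states the corollary without proof, remarking only that it follows because $p\in\P_k$ is arbitrary in Lemma~\ref{lemma-cancellation} and in the construction of $\chi$. Your branch-by-branch decomposition, the use of the cancellation lemma on the type intervals $I_j^p$ together with the trivial bound on the gaps $J_j^p$, and the verification that $\chi(h(y))=1$ for all branches $h\neq h_1^{(p)},h_2^{(p)}$ via disjointness of the ranges in $\H_{n_0}$, constitute exactly the standard argument (as in \cite{BalVal,AM}) that the paper is appealing to.
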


The following intervals $\hat I^p$ and $\hat J^p$ are constructed as in~\cite{AM, BalVal}.
Let $\hat I^p=\cup_{j=0}^{N-1}\hat I_j^p$, where $\hat I_j^p$ denotes the middle third of $I_j^p$.
Let $\hat J_j$ be the interval consisting of $J_j$ together with the rightmost third of $I_{j-1}^p$
and the leftmost third of $I_j^p$. Define $\hat J_0^p$ and $\hat J_p^N$ with the obvious modifications.
By construction, $\diam(\hat I_j^p)\geq \frac{1}{3}\frac{\delta}{|b|}$ and $\diam(\hat J_j^p)\geq (\frac{4}{3}+2\Delta)\frac{\delta}{|b|}$.
Hence, there is a constant $\delta'=\delta/(4\delta+6\Delta)>0$ (independent of $b$) such that $\diam(\hat I_j^p)\geq \delta'\diam(\hat J_j^p)$
for $j=0,\dots, N-1$. 

\begin{prop}\label{prop-intsupinfw} Suppose that $w$ is a positive function with $\frac{\sup_p w}{\inf_p w}\leq M$ for some $M>0$.
Then $\int_{\hat I^p}w\, d\Leb\geq \delta'' \int_{\hat J^p}w\, d\Leb$, where $\delta''=(2 M)^{-1} \delta'$.
\end{prop}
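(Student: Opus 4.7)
The plan is to exploit the uniform bound $\sup_p w \leq M \inf_p w$ on the atom $p \in \P_k$ in order to reduce the integral comparison to a purely geometric Lebesgue-measure comparison between $\hat I^p$ and $\hat J^p$, and then to invoke the per-index diameter bound $\diam(\hat I_j^p) \geq \delta' \diam(\hat J_j^p)$ that was recorded just above the statement.

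First I would note that positivity of $w$ together with the hypothesis gives
$$
\int_{\hat I^p} w\, d\Leb \geq \inf_p w \cdot \Leb(\hat I^p), \qquad \int_{\hat J^p} w\, d\Leb \leq \sup_p w \cdot \Leb(\hat J^p) \leq M \inf_p w \cdot \Leb(\hat J^p),
$$
since both $\hat I^p$ and $\hat J^p$ are contained in the same atom $p$. Dividing these reduces the proposition to the purely geometric inequality $\Leb(\hat I^p) \geq (\delta'/2)\Leb(\hat J^p)$.

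For this geometric step, I would sum the per-index bound $\diam(\hat I_j^p) \geq \delta' \diam(\hat J_j^p)$ over $j = 0, \dots, N-1$, giving $\Leb(\hat I^p) = \sum_{j=0}^{N-1} \diam(\hat I_j^p) \geq \delta' \sum_{j=0}^{N-1} \diam(\hat J_j^p)$. The only missing piece in $\Leb(\hat J^p) = \sum_{j=0}^{N} \diam(\hat J_j^p)$ is the single boundary interval $\hat J_N^p$, which is not matched by any $\hat I_j^p$ on the list. Since all the $\hat J_j^p$ are of comparable size (bounded both above and below by fixed multiples of $1/|b|$ from the explicit construction), discarding this one extra interval costs at most a factor of $2$; hence $\Leb(\hat J^p) \leq 2 \sum_{j=0}^{N-1} \diam(\hat J_j^p)$ and therefore $\Leb(\hat I^p) \geq (\delta'/2)\Leb(\hat J^p)$.

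Assembling the two steps yields the proposition with $\delta'' = \delta'/(2M)$. There is no substantive obstacle here; the only bookkeeping subtlety is the extra boundary interval $\hat J_N^p$ (or, symmetrically, $\hat J_0^p$), and the factor $1/2$ in the definition of $\delta''$ is precisely the price paid for discarding that unmatched interval.
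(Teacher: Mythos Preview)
Your approach is essentially the paper's: use the bound $\sup_p w \leq M\inf_p w$ to reduce to a Lebesgue-measure comparison, then invoke the per-index diameter inequality $\diam(\hat I_j^p)\geq \delta'\diam(\hat J_j^p)$. The paper does the same computation index-by-index rather than globally, but the content is identical.

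There is, however, a genuine slip in your factor-$2$ step. You claim $\Leb(\hat J^p)\leq 2\sum_{j=0}^{N-1}\diam(\hat J_j^p)$ on the grounds that the $\hat J_j^p$ are ``of comparable size''. But their diameters range from as small as $\delta/(3|b|)$ (for a boundary $\hat J_0^p$ with degenerate gap $J_0^p$) up to $(4\delta/3+2\Delta)/|b|$, a ratio of order $\Delta/\delta$, not $2$. When $N=1$ your inequality reads $\diam(\hat J_1^p)\leq \diam(\hat J_0^p)$, which can fail badly.

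The correct reason for the factor $2$ is different, and it is what the paper means by ``the factor $2$ takes care of the intervals $\hat J_0^p$ and $\hat J_N^p$''. The bound $\diam(\hat I_j^p)\geq \delta'\diam(\hat J_{j'}^p)$ actually holds for \emph{every} pair $(j,j')$, because it comes from the uniform estimates $\diam(\hat I_j^p)\geq \delta/(3|b|)$ and $\diam(\hat J_{j'}^p)\leq (4\delta/3+2\Delta)/|b|$, not from adjacency. Hence each $\hat I_j^p$ can be paired with both neighbours $\hat J_j^p$ and $\hat J_{j+1}^p$:
\[
2\,\Leb(\hat I^p)=2\sum_{j=0}^{N-1}\diam(\hat I_j^p)\ \geq\ \delta'\sum_{j=0}^{N-1}\diam(\hat J_j^p)+\delta'\sum_{j=0}^{N-1}\diam(\hat J_{j+1}^p)\ \geq\ \delta'\sum_{j=0}^{N}\diam(\hat J_j^p)=\delta'\,\Leb(\hat J^p),
\]
which gives $\Leb(\hat I^p)\geq(\delta'/2)\Leb(\hat J^p)$ and completes your argument.
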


\begin{proof}Compute that
\begin{eqnarray*}
\int_{\hat I^p}w\, d\Leb &\geq& \Leb(\hat I_j^p)\inf_p w\geq M^{-1} \delta'\Leb(\hat J_j^p) \sup_p w \\
&=& 2\delta''\Leb(\hat J_j^p) \inf_p w \geq  2\delta'' \int_{\hat J_j^p}w\, d\Leb.
\end{eqnarray*}
Here the factor $2$ takes care of the intervals $\hat J_0^p$ and $\hat J_p^N$.~\end{proof}

\section{Invariance of the cone}
\label{sec-invcone}

Recall that the cone $\CC_b$ was defined in \eqref{eq:cone}. 
The main result of this section is:

\begin{lemma}\label{lemma-invcone} Assume $|b| \geq 2$. Then
$\CC_b$ is invariant under 
$(u,v) \mapsto (\tilde\L_{\sigma}^{n_0}(\chi u), \tilde\L_s^{n_0}v)$,
where $\chi = \chi(b,u,v) \in C^1(Y)$ comes from Corollary~\ref{cor-canc}.
\end{lemma}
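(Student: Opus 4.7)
The plan is to verify the three defining properties of $\CC_b$ for the image pair $(U,V):=(\tilde\L_\sigma^{n_0}(\chi u),\tilde\L_s^{n_0} v)$. Positivity $U>0$ is immediate from $u>0$, $\chi\geq\eta_0>0$ and $f_\sigma>0$ (by~\eqref{eq-lamfsigma}); the pointwise domination $|V|\leq U$ is precisely Corollary~\ref{cor-canc}.

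For the exponentially decreasing jump-sizes, I would observe that $\chi\in C^1(Y)$, so $\chi u$ has exactly the discontinuity set of $u$ (contained in $X_\infty$) with $\Size(\chi u)(x)\leq\Size u(x)$. The pair $(\chi u,v)$ therefore satisfies the hypothesis of Proposition~\ref{prop:inductive_jumpsize}, whose argument applies unchanged to any $n_0$ multiple of $k$, giving for every $x\in X'_j$ with $j>k$
$$\Size U(x),\ \Size V(x)\leq\tfrac14 M_u\,C_7\rho^{-j} U(x),\qquad M_u:=\max_{p\in\P_k}\frac{\sup u|_p}{\inf u|_p}.$$
Membership in $\CC_b$ will then follow provided $M_u\leq 4$, a bound I would carry along in the induction by combining the cone's oscillation inequality with $|v|\leq u$, the estimate $C_8 E_I(u)\leq\tfrac1{12}\sup_I u$ on intervals in atoms of $\P_k$ (a consequence of~\eqref{eq:kE}), and the contraction from $\chi\leq 1$.

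The main step is the oscillation bound. Fix $p\in\P_k$ and $I\subset p$ and expand
$$V(y)=\sum_{h\in\H_{n_0},\, y\in\dom(h)} \Phi_{s,h}(y)\,v(h(y)),\qquad \Phi_{s,h}(y):=\frac{|h'(y)|\,e^{s\varphi_{n_0}\circ h(y)}f_\sigma(h(y))}{\lambda_\sigma^{n_0}\,f_\sigma(y)}.$$
I would split $\Osc_I V$ into three parts: (i) the smooth variation of the multipliers $\Phi_{\sigma,h}$ on $I$, controlled by Adler's inequality~\eqref{eq:adler}, the derivative bound~\eqref{eq:C2}, Lemmas~\ref{lem:eps}--\ref{lem:jump_fsigma}, and the factor $|b|$ from $e^{ib\varphi_{n_0}\circ h}$, producing the linear-in-$|b|$ growth to be balanced against $C_{10}$ through~\eqref{eq-n0-1}; (ii) the oscillation of $v\circ h$ on $h(I)$, to which I apply the cone inequality on $v$ (after subdividing $h(I)$ into pieces lying in single atoms of $\P_k$), obtaining $C_{10}|b|\Leb(h(I))\sup u|_{h(I)}+C_8 E_{h(I)}(u)$, with $\Leb(h(I))\leq \rho_0^{-n_0}\Leb(I)$ and, crucially, the pointwise weight $\chi(h(y))=\eta<1$ on the middle-third subintervals of type $h_1,h_2$ supplying the extra factor strictly below $1$; (iii) propagated jumps at points of $X_\infty\cap h(I)$, controlled by Proposition~\ref{prop:inductive_jumpsize} for $(\chi u,v)$ and Lemma~\ref{lem:jump_fsigma} for $f_\sigma$, which assemble into $C_8 E_I(U)$. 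Summing over $h$ and comparing with $U$ via Lemma~\ref{lem:G} closes the inequality. The main obstacle is the bookkeeping in (ii)--(iii): one must verify that after summation the emerging constants do not exceed $C_{10}|b|$ and $C_8$. This is precisely where the Dolgopyat-style cancellation through $\chi$ (values $\eta<1$ on a uniformly positive fraction of $I$, by the construction in Section~\ref{sec-cance}) and the quantitative choices in~\eqref{eq:k} and~\eqref{eq-n0-1} must interlock exactly.
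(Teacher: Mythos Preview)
Your overall skeleton (positivity, domination via Corollary~\ref{cor-canc}, then exponential jump-sizes, then oscillation) matches the paper, and your three-way split of the oscillation is close to the paper's five-factor decomposition. But there is a genuine conceptual gap in how you close the oscillation inequality.

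You write that in part (ii) ``the pointwise weight $\chi(h(y))=\eta<1$ on the middle-third subintervals \ldots\ supply[s] the extra factor strictly below $1$'', and later that ``the Dolgopyat-style cancellation through $\chi$'' is what makes the constants fit. This is not how the argument works. The function $\chi$ equals $1$ on all but a tiny portion of $Y$ (only on the middle thirds of two specific branches per atom does it drop to $\eta$), so it cannot produce a uniform contraction of the oscillation. In the paper the contraction factor $\eta_0$ in
\[
\Osc_{I}(\tilde\L_s^{n_0}v)\;\le\; C_{10}\,\eta_0\,|b|\,\Leb(I)\sup_I(\tilde\L_\sigma^{n_0}u)\;+\;C_8\,\eta_0\,E_I(\tilde\L_\sigma^{n_0}u)
\]
comes entirely from the \emph{definitions} of $C_{10}$ and $C_8$: note that $C_{10}$ has $2\eta_0-4\rho_0^{-k}$ in the denominator and $C_8=3C_7/\eta_0$, so when one sums the contributions of the five factors $|h'|$, $e^{s\varphi_n\circ h}$, $f_\sigma\circ h$, $1/f_\sigma$, $v\circ h$, the total is $\le C_{10}\eta_0|b|\Leb(I)\sup(\cdot)+C_8\eta_0 E_I(\cdot)$ by design. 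Only \emph{after} this is established does $\chi$ enter, via the trivial inequality $\eta_0\,\tilde\L_\sigma^{n_0}u\le\tilde\L_\sigma^{n_0}(\chi u)$ (since $\chi\ge\eta_0$), which converts the right-hand side into the cone form for $U=\tilde\L_\sigma^{n_0}(\chi u)$. The role you assign to $\chi$ is the one it plays in the $L^2$-contraction (Lemma~\ref{lemma-L2}), not in the cone invariance. Relatedly, your appeal to~\eqref{eq-n0-1} in part (i) is misplaced: that inequality is used in the cancellation Lemma~\ref{lemma-cancellation}, not here.

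A second, smaller point: you apply Proposition~\ref{prop:inductive_jumpsize} directly at level $n_0$, asserting its proof ``applies unchanged to any $n_0$ multiple of $k$''. The paper instead iterates in steps of $n=2k$, defining $(u_i,v_i)=(\tilde\L_\sigma^{n}u_{i-1},\tilde\L_s^{n}v_{i-1})$ and showing at each stage that $\sup_p u_i/\inf_p u_i\le 4$ (from the oscillation bound plus~\eqref{eq:kE}) so that Proposition~\ref{prop:inductive_jumpsize} applies with its stated $n=2k$. Going directly to $n_0$ would require re-proving the proposition at that level and controlling the intermediate ratios anyway, so you end up with the same induction.
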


\begin{proof}
Since $\chi u\geq \eta u>0$ and $\tilde \L_{\sigma}$ is a positive operator we have 
$\tilde\L_{\sigma}^{n_0}(\chi u)>0$. The condition $|\tilde\L_s^{n_0}v|\leq \tilde\L_{\sigma}^{n_0}(\chi u)$
follows from Corollary~\ref{cor-canc}. In what follows we check the other cone conditions for the pair
$(\tilde\L_{\sigma}^{n_0}(\chi u), \tilde\L_s^{n_0}v)$.

For simplicity of exposition, we assume that $n_0 = 2qk$ for some $q \geq 1$.
We will start with invariance of the exponential jump-size and oscillation
conditions under $(u,v) \mapsto (\tilde \L_{\sigma}^n u, \tilde \L_s^n v)$
for a smaller exponent $n = 2k$. Iterating
this, we get to the required exponent $n_0$.
Hence define
\begin{eqnarray*}
(u_1, v_1) &=& (\tilde \L_{\sigma}^{n} u, \tilde \L_s^{n}v) \\
(u_2, v_2) &=& (\tilde \L_{\sigma}^{n} u_1, \tilde \L_s^{n}v_1) \\
\vdots \ \quad  & \vdots & \qquad \quad \vdots \\
(u_{q-1}, v_{q-1}) &=& (\tilde \L_{\sigma}^{n} u_{q-2}, \tilde \L_s^{n}v_{q-2}) \\
(u_q, v_q) &=& (\tilde \L_{\sigma}^{n} u_{q-1}, \tilde \L_s^{n} v_{q-1})
= (\tilde \L_{\sigma}^{n_0} u, \tilde \L_s^{n_0}v).
\end{eqnarray*}
Since $|v| \leq u$, this construction shows that $|v| \leq u$ for all
$1 \leq i \leq q$.
We will now show by induction that
$(u_i, v_i)$ satisfies \eqref{eq:EJ} and
$\Osc_I v_i \leq  C_{10} |b| \Leb(I) \sup_I u_i + C_8 E_I(u_i)$
for all $1 \leq i \leq q$.

{\bf The {\lq}exponential decrease of jump-sizes{\rq} condition in $\CC_b$.}
Without loss of generality we can refine (if needed) 
the partition $\P_k$ such that 
\begin{equation}\label{eq:Pk}
C_{10} |b| \Leb([\xi_{i-1}, \xi_i]) \leq \twothird,
\end{equation}
for all $i$.
Then the oscillation condition applied to $(u,v=u)$
combined with \eqref{eq:Pk} and the fact that $E_I(u) \leq \frac{1}{12} \sup_p u$ give
$\sup_p u - \inf_p u = \Osc_p u \leq (\frac23 + \frac{1}{12}) \sup_p u$.
Therefore
$\frac{\sup u|_p}{\inf u|_p} \leq 4$ for each $p \in \P_k$.
The invariance of the exponential jump-size condition
follows by Proposition~\ref{prop:inductive_jumpsize}, that is:
the pair $(\tilde \L_{\sigma}^n u, \tilde \L_s^n v)$ satisfies
\eqref{eq:EJ} as well.

{\bf The {\lq}oscillation{\rq} condition in $\CC_b$.}
For the invariance of the oscillation condition,
we need to verify
$$
\Osc_I(\tilde\L_s^nv)
\leq C_{10} |b| \Leb(I) \sup_{x \in I}(\tilde\L_{\sigma}^n u)(x)
+ C_8 E_I (\tilde\L_{\sigma}^nu).
$$
For this purpose, we split $\Osc_I(\tilde\L_s^nv)$ into a sum of jump-sizes
at non-onto branches
(\ie $\partial \dom(h) \cap I^\circ \neq \emptyset$, corresponding to the ``created'' discontinuities), 
and a sum of onto branches (which includes ``propagated'' discontinuities).
Because of \eqref{eq:oscsize}, this gives the following:
\begin{eqnarray*}
\Osc_I(\tilde\L_s^nv) &\leq & \sum_{h \in \H_n, \partial \dom(h) \cap I^\circ \neq \emptyset }
\Size \Big( |h'| e^{s \varphi_n \circ h(x)} \frac{(f_{\sigma} v) \circ h }{\lambda_{\sigma}^n f_{\sigma} } \Big)(\partial \dom(h) \cap I^\circ) \\
&& + \sum_{h \in \H_n, \dom(h) \cap I^\circ \neq \emptyset} \Osc_I\Big( |h'| e^{s \varphi_n \circ h} \frac{(f_{\sigma} v) \circ h }{\lambda_{\sigma}^n f_{\sigma} } \Big) 
\\
&=& O_1 + O_2.
\end{eqnarray*}
For the term $O_1$ we use Proposition~\ref{prop:inductive_jumpsize}, and recall that $I \subset p$, so each created discontinuity $x$ in this sum belong to $X'_j$ 
for some $k < j \leq n$. We obtain
\begin{equation}\label{eq:E1}
O_1 \leq C_7 \sum_{j=k+1}^n \rho^{-j} \sum_{x \in X'_j \cap I^\circ}
\tilde\L_{\sigma}^n u(x), 
\end{equation}
which contributes to $E_I(\tilde\L_{\sigma}^n(\chi u))$.
 
Now for the sum $O_2$ (concerning the interiors of $\dom(h)$, $h \in \H_n$),
we decompose the summands into five parts, according to the five factors
$|h'|$, $e^{s \varphi_n \circ h}$, $f_{\sigma} \circ h$, $1/f_{\sigma}$ and $v \circ h$
of which the oscillations have to be estimated.
The estimates for this five parts are as follows.\\
{\bf The term with $|h'|$.}
For each $h \in \H_n$ we have
$1 = h' \circ F^n \cdot(F^n)'$ and $0 = h'' \circ F^n \cdot ((F^n)')^2 +
h' \circ F^n \cdot (F^n)''$.
Using Adler's condition \eqref{eq:adler} for the branches of $F^n$, 
\begin{equation}\label{eq:hpp}
|h''(\xi)| = \frac{ |(F^n)'' \circ h(\xi)|}{|(F^n)' \circ h(\xi)|^2} \cdot |h'(\xi)| \leq C_1 |h'(\xi)|
\end{equation}
for each $n \geq 1$ and $\xi \in a \in \alpha^n$. 
Hence by the Mean Value Theorem,
$$
\Osc_{I^\circ}(|h'|) \leq  \Leb(I) |h''(\xi)|
\leq  C_1 \Leb(I) |h'(\xi)|
\leq  C_1e^{C_1}  \Leb(I) \inf_{x \in \dom(h) \cap I} |h'(x)|.
$$
Summing over all $h \in \H_n$ with $\dom(h) \cap I^\circ \neq \emptyset$, we get
\begin{equation}\label{eq:O1}
\sum_{\stackrel{h \in \H_n}{\dom(h) \cap I^\circ \neq \emptyset}}
\Osc_{I^\circ}(|h'|) \sup_{x \in \dom(h) \cap I^\circ} e^{\sigma \varphi_n \circ h(x)} 
\frac{(f_{\sigma} |v|) \circ h(x)}{\lambda_{\sigma}^n f_{\sigma}(x)}
\leq  C_1 e^{C_1}  \Leb(I)  \sup_{x \in I} (\tilde \L_{\sigma}^n u)(x).
\end{equation}

\noindent
{\bf The term with $e^{s \varphi_n \circ h}$}.
Write 
$\varphi_n(x) = \sum_{i=0}^{m-1} \varphi \circ F^i(x)$ and
$h = h_n\circ h_{n-1} \circ \dots \circ h_1 \in \H_n$ where $h_j \in \H_1$ for $1 \leq j \leq n$. 
Then by \eqref{eq:C2}
\begin{align}\label{eq:phin}
|(\varphi_n \circ h)'| &\leq \sum_{j=0}^{n-1}| (\varphi \circ h_{n-j} \circ F^{j+1} \circ h)'| 
= \sum_{j=0}^{n-1} |(\varphi \circ h_{n-j})'| \cdot |(F^{j+1} \circ h)'| \nonumber \\
& \leq C_2 \sum_{j=0}^{n-1} \rho_0^{-(n-(j+1))} \leq \frac{C_2 \rho_0}{\rho_0-1}
 =: C'_2.
\end{align}
By the Mean Value Theorem 
$\frac{\sup_{x \in I} e^{\sigma \varphi_n \circ h(x)}}
{\inf_{x \in I} e^{\sigma \varphi_n \circ h(x)}}
\leq e^{\sigma (\varphi_n \circ h)'(\xi) \Leb(I)}
\leq e^{\eps C'_2}$.
Therefore
\begin{eqnarray*}
\Osc_{I^\circ}(e^{s \varphi_n \circ h}) &=& |s| e^{\sigma \varphi_n \circ h(\xi)}
|(\varphi_n \circ h)'(\xi)| \Leb(I) \\
&\leq & (1+\eps)|b|  \frac{\sup_{x \in I} e^{\sigma \varphi_n \circ h(x)}}
{\inf_{x \in I} e^{\sigma \varphi_n \circ h(x)}}
\inf_{x \in I} e^{\sigma \varphi_n \circ h(x)} \sup_{x \in I} (\varphi_n \circ h)'(x)\\
&\leq & (1+\eps) e^{\eps C'_2} C'_2 |b| \Leb(I) \inf_{x \in I} 
e^{\sigma \varphi_n \circ h(x)}.
\end{eqnarray*}
Summing over all $h \in \H_n$ with $\dom(h) \cap I^\circ \neq \emptyset$,
this gives
\begin{align}\label{eq:O2}
\sum_{\stackrel{h \in \H_n}{\dom(h) \cap I^\circ \neq \emptyset}}
\Osc_{I^\circ}(e^{s \varphi_n \circ h}) & \sup_{x \in \dom(h) \cap I^\circ} |h'(x)| 
\frac{(f_{\sigma} |v|) \circ h(x)}{\lambda_{\sigma}^n f_{\sigma}(x)} \nonumber \\
& \leq  (1+\eps) e^{\eps C'_2}  C'_2 |b| \Leb(I) \sup_{x \in I} (\tilde \L_{\sigma}^n u)(x).
\end{align}

\noindent
{\bf The term with $f_{\sigma} \circ h$}.
Applying Lemma~\ref{lem:jump_fsigma}, part 2 to $f_{\sigma} \circ h$ we find
\begin{equation}\label{eq:oscfh}
\Osc_{I^\circ}(f_{\sigma} \circ h) \leq C_6 \Leb(h(I)) \inf_{x\in h(I)} f_{\sigma}(x) + C_7 E_{h(I)}(f_{\sigma}).
\end{equation}
For an arbitrary $h \in \H_n$, the first term in \eqref{eq:oscfh}, multiplied by
$\sup_{x \in \dom(h) \cap I^\circ} |h'(x)|\ | e^{s \varphi_n \circ h(x)} | \
\frac{|v| \circ h(x)}{\lambda_{\sigma}^n f_{\sigma}(x)}$ is bounded by
$$
 C_6 \Leb(h(I))\sup_{x \in \dom(h) \cap I^\circ} |h'(x)| 
e^{\sigma \varphi_n \circ h(x)} 
\frac{ (f_{\sigma} u) \circ h(x)}{\lambda_{\sigma}^n f_{\sigma}(x)}.
$$
Summing over all $h \in \H_n$ with $\dom(h) \cap I^\circ \neq \emptyset$
gives
\begin{equation}\label{eq:O3}
\sum_{\stackrel{h \in \H_n}{\dom(h) \cap I^\circ \neq \emptyset}}
 C_6 \Leb(h(I)) \sup_{x \in \dom(h) \cap I^\circ} |h'(x)| 
e^{\sigma \varphi_n \circ h(x)} 
\frac{ (f_{\sigma} u) \circ h(x)}{\lambda_{\sigma}^n f_{\sigma}(x)}
\leq  C_6 \rho_0^{-n} \Leb(I) \sup_{x \in I} (\tilde \L_{\sigma}^n u)(x).
\end{equation} 
The second term in \eqref{eq:oscfh} is a sum over propagated discontinuities $x \in I^\circ$, and for each $x$ we let $\tilde h \in \H_n$ be the inverse branch such that
$f_{\sigma}$ has a discontinuity at $y = \tilde h(x)$, and $j>k$ is such that $x \in X'_j$.
By Lemma~\ref{lem:jump_fsigma}
the term in $E_{h(I)}(f_{\sigma})$ related to $y$ is bounded by
$C_7 \rho^{-3(j-n)} f_{\sigma}(y)$. 
Multiplied by
$|\tilde h'(x)|\ | e^{s \varphi_n \circ\tilde h(x)} |\ \frac{|v| \circ\tilde h(x)}{\lambda_{\sigma}^n f_{\sigma}(x)}$,
and using \eqref{eq:Lnu} to obtain an upper bound for $u\circ \tilde h(x) = u(y)$, this gives
\begin{eqnarray*}
\frac{C_7}{\rho^{3(j-n)}} f_{\sigma}(y)
|\tilde h'(x)|\ e^{\sigma \varphi_n \circ\tilde h(x)} \frac{|v| \circ\tilde h(x)}{\lambda_{\sigma}^n f_{\sigma}(x)}
&\leq & \frac{C_7}{\rho^{3(j-n)}} \rho^{-3n} 
\frac{(f_{\sigma} u) \circ \tilde h(x)}{\lambda_{\sigma}^n f_{\sigma}(x)} \\
&\leq & C_7  \rho^{-j} \frac{\sup f_{\sigma}}{\inf f_{\sigma}}
\frac{\sup u|_p}{\inf u|_p} \frac{1}{\rho^k C_9 \Leb(p)} \tilde \L_{\sigma}^n u(x).
\end{eqnarray*}
Since $\frac{\sup u|_p}{\inf u|_p} \leq 4$,
the bound on $\Leb(p)$ in \eqref{eq:k} gives
$\frac{\sup f_{\sigma}}{\inf f_{\sigma}}
\frac{\sup u|_p}{\inf u|_p} \frac{1}{\rho^k C_9 \Leb(p)} \leq 1$.
Hence, summing over all propagated discontinuities $x \in I^\circ$
and corresponding branches, we get
\begin{equation}\label{eq:E2}
C_7 \sum_{j > n} \sum_{x \in X'_j \cap I^\circ} \rho^{-3(j-n)}
f_{\sigma}(y)
|h'(x)|\ e^{\sigma \varphi_n \circ h(x)} \frac{|v| \circ h(x)}{\lambda_{\sigma}^n f_{\sigma}(x)} \le
C_7  \sum_{j > n} \rho^{-j} \sum_{x \in X'_j \cap I^\circ} \tilde\L_{\sigma}^n u(x).
\end{equation}
which contributes to $E_I(\tilde\L_{\sigma}^n u)$.

\noindent
{\bf The term with $1/f_{\sigma}$}. 
Applying Lemma~\ref{lem:jump_fsigma}, part 2.\ to $f_{\sigma} \circ h$ we find
\begin{equation}\label{eq:osc1f}
\Osc_{I^\circ}(1/f_{\sigma}) \leq C_6 \Leb(I) \inf_{x\in h(I)} 1/f_{\sigma}(x) + C_7 E_I(1/f_{\sigma}).
\end{equation}
For $h \in \H_n$, the first term of \eqref{eq:osc1f}, multiplied by
$\sup_{x \in \dom(h) \cap I^\circ} |h'(x)|\ | e^{s \varphi_n \circ h(x)} | \
\frac{(f_{\sigma}|v|) \circ h(x)}{\lambda_{\sigma}^n}$ is bounded by
$$
 C_6 \Leb(I) \sup_{x \in \dom(h) \cap I^\circ} |h'(x)| 
e^{\sigma \varphi_n \circ h(x)} 
\frac{ (f_{\sigma} u) \circ h(x)}{\lambda_{\sigma}^nf_{\sigma}(x)}.
$$
Summing over all $h \in \H_n$ with $\dom(h) \cap I^\circ \neq \emptyset$
gives
\begin{equation}\label{eq:O4}
\sum_{\stackrel{h \in \H_n}{\dom(h) \cap I^\circ \neq \emptyset}}
 C_6 \Leb(I) \sup_{x \in \dom(h) \cap I^\circ} |h'(x)| 
e^{\sigma \varphi_n \circ h(x)} 
\frac{ (f_{\sigma} u) \circ h(x)}{\lambda_{\sigma}^n f_{\sigma}(x)}
\leq  C_6 \Leb(I) \sup_{x \in I} (\tilde \L_{\sigma}^n u)(x).
\end{equation}
The second term of \eqref{eq:osc1f} is a sum over propagated discontinuities $x \in I^\circ$. Take $j > k$ such that $x \in X'_j$.
Lemma~\ref{lem:jump_fsigma} gives that 
the term in $E_I$ related to $x$ is bounded by
$C_7 \rho^{-3j}/f_{\sigma}(x)$. 
Multiplying with 
$|h'(x)| \ |e^{\sigma \varphi_n \circ h(x)}|\ \frac{(f_{\sigma} u) \circ h(x)}{\lambda_{\sigma}^n}$
and then summing over all $x \in \cup_{j > k} X'_j \cap I^\circ$
and $h \in \H_n$ with $x \in \dom(h)$ gives
\begin{equation}\label{eq:E3}
C_7  \sum_{j > k} \rho^{-3j} \sum_{x \in X'_j \cap I^\circ}
|h'(x)| e^{\sigma \varphi_n \circ h(x)} \frac{(f_{\sigma} u) \circ h(x)}{\lambda_{\sigma}^n f_{\sigma}(x) }
\leq C_7  \sum_{j > k} \rho^{-j}\sum_{x \in X'_j \cap I^\circ}
(\tilde \L_{\sigma}^n u)(x),
\end{equation}
which contributes to $E_I(\tilde \L_{\sigma}^n u)$.

\noindent
{\bf The term with $v$}.
Using the cone condition for $v$, we obtain
\begin{eqnarray}\label{eq:oscvh}
\Osc_{I^\circ}(v \circ h) &\leq & C_{10} \Leb(h(I)) |b| \sup_{x \in h(I)} u(x) 
+ C_8 E_{h(I)}(u) \nonumber \\
&\leq & \rho_0^{-n} \frac{\sup u|_{h(I)}}{\inf u|_{h(I)}} C_{10} \Leb(I)\ |b|\ \inf_{x \in h(I)} u(x) 
+ C_8 E_{h(I)}(u).
\end{eqnarray}
For $h \in \H_n$, the first term of \eqref{eq:oscvh}, 
multiplied by
$\sup_{x \in \dom(h) \cap I^\circ} |h'(x)|\ | e^{s \varphi_n \circ h(x)} | \
\frac{f_{\sigma} \circ h(x)}{\lambda_{\sigma}^n f_{\sigma}(x)}$, is bounded by
$$
4\rho_0^{-n}  C_{10} |b| \Leb(I) \sup_{x \in \dom(h) \cap I^\circ} |h'(x)| 
e^{\sigma \varphi_n \circ h(x)} 
\frac{ (f_{\sigma} u) \circ h(x)}{\lambda_{\sigma}^nf_{\sigma}(x)}.
$$
Summing over all $h \in \H_n$ with $\dom(h) \cap I^\circ \neq \emptyset$
gives
\begin{equation}\label{eq:O5}
\sum_{\stackrel{h \in \H_n}{\dom(h) \cap I^\circ \neq \emptyset}}
\frac{4C_{10}}{\rho_0^n} |b| \Leb(I) \sup_{x \in \dom(h) \cap I^\circ} |h'(x)| 
e^{\sigma \varphi_n \circ h(x)} 
\frac{ (f_{\sigma} u) \circ h(x)}{\lambda_{\sigma}^nf_{\sigma}(x)}
\leq  \frac{4C_{10}}{\rho_0^n} |b| \Leb(I) \sup_{x \in I} (\tilde \L_{\sigma}^n u)(x).
\end{equation}
The second term of \eqref{eq:oscvh} is a sum over propagated discontinuities $x \in I^\circ$. For each such $x$ we let $\tilde h \in \H_n$ be the inverse branch such that 
$v$ has a discontinuity at $y = \tilde h(x)$, and $j$ is such that $x \in X'_j$.
\\
{\bf Case a:} Assume that $j-n > k$.
Since $u$ has exponentially decreasing jump-sizes, we 
get that the term in $E_{h(I)}$ related to $y$ is bounded by
$C_7 \rho^{-(j-n)} u(y)$. 
After multiplying by
$|\tilde h'(x)|\ | e^{s \varphi_n \circ\tilde h(x)} |\ \frac{f_{\sigma} \circ\tilde h(x)}{\lambda_{\sigma}^n f_{\sigma}(x)}$,
and using \eqref{eq:Lnu} for an upper bound of
$u \circ \tilde h(x) = u(y)$, we have
\begin{eqnarray*}
C_7 \rho^{-(j-n)} u(y)
|h'(x)|\ e^{\sigma \varphi_n \circ h(x)} \frac{f_{\sigma} \circ h(x)}{\lambda_{\sigma}^n f_{\sigma}(x)}
&\leq & C_7 \rho^{-(j-n)} \rho^{-3n} \frac{(f_{\sigma} u) \circ \tilde h(x)}{f_{\sigma}(x)} \\
&\leq & C_7  \rho^{-j} 
\frac{\sup f_{\sigma}}{\inf f_{\sigma}}
\frac{\sup u|_p}{\inf u|_p} \frac{1}{\rho^k C_9 \Leb(p)} \tilde \L_{\sigma}^n u(x) \\
&\leq & \frac{C_7}{C_8}  \rho^{-j} \tilde \L_{\sigma}^n u(x),
\end{eqnarray*}
because $\frac{\sup u|_p}{\inf u|_p} \leq 4$,
and using the bound on $\Leb(p)$ from \eqref{eq:k}.
\\
{\bf Case b:} Assume that $j-n \leq k$. Then \eqref{eq:jumpsize}
doesn't apply to 
the term in $E_{h(I)}$ related to $y$, so it can only be bounded by
$u(y)$. 
Multiplied by
$|\tilde h'(x)|\ | e^{s \varphi_n \circ\tilde h(x)} |\ \frac{f_{\sigma} \circ\tilde h(x)}{\lambda_{\sigma}^n f_{\sigma}(x)}$, and
 using \eqref{eq:Lnu} for obtaining an upper bound of
$u \circ \tilde h(x) = u(y)$, we have
\begin{eqnarray*}
u(y) |h'(x)|\ e^{\sigma \varphi_n \circ h(x)} \frac{f_{\sigma} \circ h(x)}{\lambda_{\sigma}^n f_{\sigma}(x)}
&\leq & \rho^{-3n} \frac{(f_{\sigma} u) \circ \tilde h(x)}{f_{\sigma}(x)} \\
&\leq & \rho^{-2(n-k)} \frac{\sup f_{\sigma}}{\inf f_{\sigma}}
\frac{\sup u|_p}{\inf u|_p} \frac{1}{\rho^k C_9 \Leb(p)} \rho^{-j} \tilde \L_{\sigma}^n u(x) \\
&\leq &  \frac{1}{C_8}  \rho^{-j} \tilde \L_{\sigma}^n u(x),
\end{eqnarray*}
because $\frac{\sup u|_p}{\inf u|_p} \leq 4$,
and using the bound on $\Leb(p)$ from \eqref{eq:k}.
Hence, summing over all propagated discontinuities $x \in I^\circ$
and corresponding branches, we get
\begin{equation}\label{eq:E4}
C_7 \sum_{j > n} \sum_{x \in X'_j \cap I^\circ} \rho^{-(j-n)}
f_{\sigma}(y)
|h'(x)|\ e^{\sigma \varphi_n \circ h(x)} \frac{|v| \circ h(x)}{\lambda_{\sigma}^n f_{\sigma}(x)} \le
\frac{C_7}{C_8} \sum_{j > n} \rho^{-j}  \sum_{x \in X'_j \cap I^\circ} \tilde\L_{\sigma}^n u(x),
\end{equation}
which contributes to $E_I(\tilde\L_{\sigma}^n u)$.
This completes the treatment of the five terms.

Combining terms \eqref{eq:O1}, \eqref{eq:O2}, \eqref{eq:O3}, \eqref{eq:O4} and 
\eqref{eq:O5}, the oscillation part is bounded by
$$
\left(C_1 e^{C_1} + (1+\eps)|b| e^{\eps C'_2} C'_2 + (1+\rho_0^{-n})C_6 + 4C_{10}\rho_0^{-n}  \right)
\Leb(I) \sup_I(\tilde \L_{\sigma}^n u)
$$
and by the choice of $C_{10}$ in Subsection~\ref{subsec-uni}, this is less
than $C_{10} |b| \Leb(I) \eta_0 \sup_I(\L_{\sigma}^n u)$
whenever $|b| \geq 2$.

Recall $C_8 = 3C_7/\eta_0$. Combining \eqref{eq:E1}, \eqref{eq:E2}, \eqref{eq:E3} and 
\eqref{eq:E4}, the jump part is bounded by
$$
3C_7 E_I(\tilde\L^n u) \leq C_8 \eta_0 E_I(\tilde\L^n u).
$$
This concludes the induction step,
proving that
\begin{eqnarray*}
\Osc_{I^\circ}(\tilde\L_s^{n_0}v) &\leq & C_{10} \eta_0 |b| \Leb(I) \sup_I
(\tilde\L_{\sigma}^{n_0} u) + C_8 \eta_0 E_I (\tilde\L_{\sigma}^{n_0} u) \\
 &\leq & C_{10} |b| \Leb(I) \sup_I
(\tilde\L_{\sigma}^{n_0} (\chi u)) + C_8 E_I (\tilde\L_{\sigma}^{n_0} (\chi u))
\end{eqnarray*}
 as required.~\end{proof}

\section{Proof of Theorem~\ref{th-main}}
\label{sec-proofmain}

Given Lemma~\ref{lemma-cancellation} and Lemma~\ref{lemma-invcone}, the proof of the $L^2$ contraction for functions in $\CC_b$ goes almost
word by word as the proof of~\cite[Theorem 2.16]{AM} with some obvious modifications. We sketch the argument in Subsection~\ref{sub-L2}.
In Subsection~\ref{sub-L2BV} we deal with arbitrary \BV\ observables  satisfying  a mild condition via the $\|\,\|_b$ norm.
In Subsection~\ref{sub-Completing the argument}, we complete the argument required for the proof of Theorem~\ref{th-main}.

\subsection{$L^2$ contraction for functions in $\CC_b$}
\label{sub-L2}

\begin{lemma}\label{lemma-L2}
There exist $\eps\in (0,1)$ and $\beta\in (0,1)$ such that for all $m\geq 1$, $s=\sigma+ib$, $|\sigma|<\eps$, $|b|\geq\max\{4\pi/D,2\}$,
\[
\int |\tilde \L_s^{mn_0}v|^2\, d\Leb\leq \beta^m \|v\|_\infty^2,
\]
for all $v\in \BV$ such that $(u,v)$ for $u=cst$ satisfy condition~\eqref{eq:EJ} in Definition~\ref{def:expjumpsize}.
\end{lemma}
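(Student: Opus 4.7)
The plan is to mirror the argument of \cite[Theorem~2.16]{AM}, exhibiting a dominating pair $(u_m,v_m)\in\CC_b$ with $v_m=\tilde\L_s^{mn_0}v$, $|v_m|\le u_m$, and proving a geometric $L^2$-contraction for $u_m$. First I would set $u_0\equiv\|v\|_\infty$ and $v_0=v$. Because $u_0$ is constant it has no jumps, and the hypothesis supplies \eqref{eq:EJ} for $(u_0,v_0)$. The oscillation cone condition is not automatic, but applying $\tilde\L_s^{n_0}$ once and using Proposition~\ref{prop-LYineq} to control $\Var_Y(\tilde\L_s^{n_0}v_0)$ together with Proposition~\ref{prop:inductive_jumpsize} to retain \eqref{eq:EJ} brings the pair into $\CC_b$ at the cost of an initial multiplicative constant that gets absorbed into $\beta$. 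From here on I assume $(u_0,v_0)\in\CC_b$.

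Given $(u_m,v_m)\in\CC_b$, Corollary~\ref{cor-canc} produces $\chi_m=\chi(b,u_m,v_m)\in C^1(Y)$ with values in $[\eta_0,1]$, equal to $\eta_0$ on the middle thirds of the intervals of the prescribed type in each atom $p\in\P_k$, and satisfying the pointwise bound $|\tilde\L_s^{n_0}v_m|\le \tilde\L_\sigma^{n_0}(\chi_m u_m)$. I would then define
\[
u_{m+1}:=\tilde\L_\sigma^{n_0}(\chi_m u_m),\qquad v_{m+1}:=\tilde\L_s^{n_0}v_m,
\]
which by Lemma~\ref{lemma-invcone} again lies in $\CC_b$, closing the induction. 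In particular the bound $\sup u_m|_p/\inf u_m|_p\le 4$ (established inside the proof of invariance from the oscillation condition applied to $(u_m,u_m)$ and the choice of $\P_k$ in \eqref{eq:Pk}) propagates to every $m$ and to $\tilde\L_\sigma^{n_0}u_m$ itself.

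For the $L^2$-contraction I would compare $u_{m+1}^2$ and $(\tilde\L_\sigma^{n_0} u_m)^2$. On each atom $p\in\P_k$, decompose $Y\cap p^{(n_0)}$ into the intervals $\hat I^p$ and $\hat J^p$ of Section~\ref{sec-cance}. On $\hat J^p$ we have $\chi_m\equiv 1$, hence no contraction is lost, but the integrand can be compared to that on $\hat I^p$: setting $w=(\tilde\L_\sigma^{n_0}u_m)^2$, which inherits a bounded ratio $\sup_p w/\inf_p w$ from the inductive control of $u_m$, Proposition~\ref{prop-intsupinfw} gives $\int_{\hat J^p}w\,d\Leb\le C\int_{\hat I^p}w\,d\Leb$ with a constant independent of $b$. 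On $\hat I^p$ the replacement of $u_m$ by $\chi_m u_m$ costs at least a factor $1-c(1-\eta_0)$ on the middle third, so after assembling the pieces,
\[
\int_Y u_{m+1}^2\,d\Leb \le \beta\int_Y (\tilde\L_\sigma^{n_0}u_m)^2\,d\Leb\le \beta\int_Y u_m^2\,d\Leb,
\]
for a constant $\beta\in(0,1)$ independent of $m$, $b$ and $\sigma$ (the second inequality uses that $\tilde\L_\sigma^{n_0}$ is an $L^2$-contraction on its invariant density, which follows from $\tilde\L_\sigma^{n_0}1\le 1$ up to a factor $\to 1$ by Remark~\ref{rem:lambda}). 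Iterating and noting $\int u_0^2\,d\Leb\le \|v\|_\infty^2\Leb(Y)$, a harmless adjustment of $\beta$ gives $\int |\tilde\L_s^{mn_0}v|^2\,d\Leb\le\beta^m\|v\|_\infty^2$.

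The main obstacle is keeping the contraction factor $\beta$ uniform in $b$ and $\sigma$ across all iterates. This uniformity rests on two pillars: the quantitative cancellation in Lemma~\ref{lemma-cancellation}, which forces $\chi_m\le \eta_0<1$ on a subset of definite relative measure on every atom of $\P_k$; and the bounded oscillation ratio $\sup u_m|_p/\inf u_m|_p\le 4$, which via Proposition~\ref{prop-intsupinfw} prevents the $\hat J^p$-mass from dominating the $\hat I^p$-mass. A secondary point is the initial passage into $\CC_b$ under only the hypothesis \eqref{eq:EJ}; this is handled by a single preliminary step combining Propositions~\ref{prop-LYineq} and \ref{prop:inductive_jumpsize}.
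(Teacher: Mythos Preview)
Your outline follows the paper's architecture: define $(u_m,v_m)$ inductively via Corollary~\ref{cor-canc} and Lemma~\ref{lemma-invcone}, then obtain geometric $L^2$-decay of $u_m$ by comparing the $\hat I^p$ and $\hat J^p$ integrals through Proposition~\ref{prop-intsupinfw}. Two points deserve correction.

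\emph{Initial cone membership.} The paper simply asserts $(u_0,v_0)\in\CC_b$; you are right that the oscillation clause does not follow from \eqref{eq:EJ} alone, but your fix (one application of Proposition~\ref{prop-LYineq}) only controls the global variation $\Var_Y$, not the local bound $\Osc_I v\le C_{10}|b|\,\Leb(I)\sup_I u+C_8E_I(u)$ required by the cone. In practice this does not matter: the lemma is only invoked (in Lemma~\ref{lemma-L2BV}) for the functions $w_{mn_0}$ produced by Proposition~\ref{prop:to-cone}, which are constructed to lie in $\CC_b$.

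\emph{The $L^2$ step.} Here you diverge from the paper in a way that leaves a gap. The paper establishes the \emph{pointwise} Cauchy--Schwarz bound
\[
u_{m+1}^2(y)\ \le\ \xi(\sigma)\,\big(\tilde\L_0^{\,n_0}(u_m^2)\big)(y)\quad\text{(with an extra }\eta_1\text{ on }\hat I^p),
\]
where the factor $\xi(\sigma)$ absorbs the conversion from $\tilde\L_\sigma$ to $\tilde\L_0$ and satisfies $\xi(\sigma)\to 1$ as $\sigma\to 0$ (Remark~\ref{rem:lambda}). It then applies Proposition~\ref{prop-intsupinfw} to $w=\tilde\L_0^{\,n_0}(u_m^2)$ and uses that $\tilde\L_0$ preserves integration against $f_0\,d\Leb$, hence $\int_Y\tilde\L_0^{\,n_0}(u_m^2)\,d\Leb$ differs from $\int_Y u_m^2\,d\Leb$ only by a bounded factor $\sup f_0/\inf f_0$. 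Your version compares instead to $w=(\tilde\L_\sigma^{\,n_0}u_m)^2$ and asserts $\int(\tilde\L_\sigma^{\,n_0}u_m)^2\,d\Leb\le\int u_m^2\,d\Leb$ ``up to a factor $\to1$''. But $\tilde\L_\sigma$ is \emph{not} an $L^1$- or $L^2$-contraction w.r.t.\ Lebesgue when $\sigma>0$: the duality relation brings in the unbounded weight $e^{\sigma\varphi}$, and Remark~\ref{rem:lambda} says nothing about this. Passing through $\tilde\L_0$ and paying the $\xi(\sigma)$ factor at the pointwise stage is precisely what eliminates the $e^{\sigma\varphi}$ weight from the integral step; without it your final inequality is unjustified.
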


\begin{proof} Set $u_0\equiv \|v\|_\infty$, $v_0=v$ and for $m\geq 0$, define
\[
u_{m+1}=\tilde \L_{\sigma}^{n_0}(\chi_m u_m),\quad v_{m+1}=\tilde \L_s(v_m),
\]
where $\chi_m$ is a function depending on $b,u_m, v_m$.  Since by definition $(u_0,v_0)\in\CC_b$, it follows from Lemma~\ref{lemma-invcone}
that $(u_m,v_m)\in\CC_b$, for all $m$. Thus, we can construct $\chi_m:=\chi(b,u_m, v_m)$ inductively as in Corollary~\ref{cor-canc}.

As in~\cite{AM, BalVal}, it is enough to show that there exists $\beta\in (0,1)$ such that $\int u_{m+1}^2 \, d\Leb\leq \beta\int u_{m}^2 \, d\Leb$
 for all $m \geq 0$. 
Then $|\tilde \L_s^{mn_0}v|= |\tilde \L_s^{mn_0}v_0|= |v_m|\leq u_m$ and thus,
\[
\int |\tilde \L_s^{mn_0}v|^2\, d\Leb\leq\int u_{m}^2 d\Leb\leq \beta^m 
\int u_{0}^2\, d\Leb=\beta^m \|v\|_\infty^2,
\]
as required. 

Let $\hat I^p,\hat J^p$ be as constructed before the statement of Proposition~\ref{prop-intsupinfw} and note that $Y=(\cup_p\hat I^p)\cup (\cup_p\hat J^p)$.
 Proceeding as in the proof of ~\cite[Lemma 2.13]{AM} (which relies on the use of the Cauchy-Schwartz inequality), we obtain that there exists $\eta_1<1$
such that for any $p\in\P_k$,
$$
u_{m+1}^2(y) \leq
\begin{cases}
  \xi(\sigma)\eta_1 (\tilde \L_0^{n_0} u_{m}^2)(y)& \text{ if } y\in\hat I^p,\\
 \xi(\sigma)(\tilde \L_0^{n_0} u_{m}^2)(y)&  \text{ if } y\in\hat J^p,
\end{cases}
$$
where 
$\xi(\sigma)=\lambda_{\sigma}^{-2n_0}\sup_p(f_0/f_{\sigma})\sup_p(f_{2\sigma}/f_{\sigma})\sup_p(f_{\sigma}/f_0)\sup_p(f_{\sigma}/f_{2\sigma})$.

Since $(u_m,v_m)\in\CC_b$, we have, in particular, that for any $p\in\P_k$, $\sup_p u_m-\inf_p u_m \leq \Osc_p u \leq (\frac{2}{3}+\frac{1}{12})\sup_p u_m$
 and thus, $\frac{\sup_p u_m}{\inf_p u_m}\leq 4$. Similarly, $\frac{\sup_p u_m^2}{\inf_p u_m^2}\leq 16$. Hence,
\begin{align*}
\frac{\sup_p \tilde \L_0^{n_0}(u_m^2)}{\inf_p \tilde \L_0^{n_0}(u_m^2)}
&= \frac{\sup_p \sum_{h\in\H_{n_0}}|h'| (f_0\circ h) (u_m^2\circ h)/f_0}{\inf_p \sum_{h\in\H_{n_0}}|h'| (f_0\circ h) (u_m^2\circ h)/f_0} \\
&\leq 16 \Big(\frac{\sup_p f_0}{\inf_p f_0}\Big)^2\ \frac{\sup_p \sum_{h\in\H_{n_0}}|h'|}{\inf_p \sum_{h\in\H_{n_0}}|h'|}<\infty.
\end{align*}
Let $w:= \tilde \L(u_m^2)$, set $M:=16 \Big(\frac{\sup_p f_0}{\inf_p f_0}\Big)^2\ \frac{\sup_p \sum_{h\in\H_{n_0}}|h'|}{\inf_p \sum_{h\in\H_{n_0}}|h'|}$
 and note that $w$ satisfies the conditions of Proposition~\ref{prop-intsupinfw} for such $M$. For any $p\in\P_k$,  it follows that 
$\int_{\hat I^p} w\, d\Leb\geq \delta''\int_{\hat J^p} w\, d\Leb$ and thus,
\[
\int_{\cup_p \hat I^p} w\, d\Leb\geq \delta''\int_{\cup_p\hat J^p} w\, d\Leb.
\]
From here on the argument goes word by word as the argument used at the end of the proof of~\cite[Theorem 2.16]{AM}.
We provide it here for completeness. Let $\beta'=\frac{1+\eta_1\delta''}{1+\delta''}<1$. Then $\delta''=\frac{1-\beta'}{\beta'-\eta_1}$
and thus, $(\beta'-\eta_1)\int_{\cup_p \hat I^p} w\, d\Leb\geq (1-\beta')\int_{\cup_p\hat J^p} w\, d\Leb$. Since also $Y=(\cup_p\hat I^p)\cup (\cup_p\hat J^p)$,
we obtain
$\eta_1\int_{\cup_p \hat I^p} w\, d\Leb +\int_{\cup_p\hat J^p} w\, d\Leb\leq \beta' \int_{Y} w\, d\Leb$. Putting the above together,
\begin{align*}
\int_{Y} u_{m+1}^2\, d\Leb &\leq  \xi(\sigma)\Big(\eta_1\int_{\cup_p \hat I^p} w\, d\Leb +\int_{\cup_p\hat J^p} w\, d\Leb\Big) \\
&\leq \xi(\sigma)\beta'\int_Y \tilde \L_0^{n_0} (u_{m+1}^2)\, d\Leb=\xi(\sigma)\beta'\int_Y  u_{m}^2\, d\Leb.
\end{align*}
To conclude, recall that by Remark~\ref{rem:lambda}, if necessary, we can shrink $\eps$ such that $\beta:=\xi(\sigma)\beta'<1$ for all $|\sigma|<\eps$.~\end{proof}

\subsection{Dealing with arbitrary \BV\ observables via the $\|\,\|_b$ norm}
\label{sub-L2BV}

The cone $\CC_b$ represents only a specific class of \BV\ observables, namely with 
discontinuities of prescribed size and location.
It is, in fact, the smallest Banach space that is invariant under 
$(u,v) \mapsto (\tilde \L_{\sigma} u, \tilde \L_s v)$
and contains all continuous \BV\ functions.

In this section we are concerned with the behaviour of $\tilde \L_s^r$ acting on \BV\ functions satisfying 
a certain mild condition  (less restrictive than belonging to $\CC_b$). To phrase such a condition we let $C_{11}$ be a positive constant such that
\begin{equation}
\label{eq-cst11}
C_{11} = 64 (1+c)^2 \Big( \frac{\sup f_{\sigma}}{\inf f_{\sigma}} \Big)^2 \,
\frac{\sup f_{2\sigma}}{\inf f_{2\sigma}} 
\left( \frac{\sup f_{\sigma}}{\inf f_{\sigma}}\frac{\sup f_0}{\inf f_0} \right)^2,
\end{equation}
where $c$ is the constant in the statement of Proposition~\ref{prop-LYineq}.
We use the following hypothesis:
\begin{equation}\label{eq:H}
\begin{cases} 
\Var_Y v\leq C_{11} |b|^2 \rho^{m n_0} \|v\|_1 & \text{ if } \sigma \geq 0,\\
\Var_Y(e^{\sigma \varphi_{m n_0}} v) \leq C_{11} |b|^2 \rho^{m n_0} \|e^{\sigma \varphi_{m n_0}}v\|_1 & \text{ if } \sigma < 0.
\end{cases}
\tag{$H_{\sigma,m}$}
 \end{equation}
The next result, Proposition~\ref{prop:to-cone}, says that for $v \in \BV(Y)$ such that if \eqref{eq:H},
then $\tilde \L_s^rv$ is exponentially close to the cone $\CC_b$ in
$\| \ \|_\infty$,
because jumps-sizes of discontinuities of $v$ outside $X_\infty$ die out
at an exponential rate and are not newly created by the dynamics of $F$.

\begin{prop}\label{prop:to-cone}
There exists  $\eps\in (0,1)$ such that
for all $s=\sigma+ib$, $|\sigma|<\eps$, $|b|\geq\max\{4\pi/D,2\}$,
and all $v \in \BV(Y)$ such that \eqref{eq:H}
holds for some $m \geq 1$, there exists
a pair $(u_{m n_0}, w_{m n_0}) \in \CC_b$ such that 
\begin{equation*}
\| \tilde \L_s^{m n_0}v - w_{m n_0} \|_\infty \leq 2C_{10} \ \rho^{-m n_0} |b|\| v \|_\infty
\ \mbox{ and }\ \| w_{m n_0}\|_\infty \leq\| v \|_\infty.
\end{equation*}
\end{prop}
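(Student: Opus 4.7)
The plan is to decompose $\tilde\L_s^{mn_0}v$ into a piece $w_{mn_0}$ whose discontinuities lie only in $X_\infty$ (a natural candidate for membership in $\CC_b$) and a remainder capturing the cumulative effect of the ``bad'' discontinuities of $v$ at points outside $X_\infty$, and to estimate the remainder via Lemma~\ref{lem:eps} combined with hypothesis~\eqref{eq:H}.

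First I would isolate these bad discontinuities: let $D:=\mathrm{Disc}(v)\setminus X_\infty$. For each $y\in D$ lying in the interior of a domain of $F^{mn_0}$ there is a unique $\tilde h\in\H_{mn_0}$ with $\tilde h(x)=y$ for $x:=F^{mn_0}(y)$, and only this branch produces a new discontinuity of $\tilde\L_s^{mn_0}v$ at $x$. The formula for $\tilde\L_s^{mn_0}v$ together with Lemma~\ref{lem:eps} and Remark~\ref{rem:lambda} yield
\[
\Size\tilde\L_s^{mn_0}v(x)\;\le\;\frac{|\tilde h'(x)|}{\lambda_{\sigma}^{mn_0}}\,e^{\sigma\varphi_{mn_0}\circ\tilde h(x)}\,\frac{f_{\sigma}(y)}{f_{\sigma}(x)}\,\Size v(y)\;\le\;C_5\,\rho^{-3mn_0}\,\Size v(y).
\]
Summing over $y\in D$, using $\sum_{y\in D}\Size v(y)\le\Var v$, and applying~\eqref{eq:H} (in the $\sigma<0$ case the factors $e^{\sigma\varphi_{mn_0}}$ are absorbed directly into the Lemma~\ref{lem:eps} bound), the total ``bad-jump mass'' of $\tilde\L_s^{mn_0}v$ is at most $C_5 C_{11}|b|^2\rho^{-2mn_0}\|v\|_1$.

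Next I would define $w_{mn_0}$ by modifying $\tilde\L_s^{mn_0}v$ on a small neighbourhood of each $x\in F^{mn_0}(D)\setminus X_\infty$, interpolating linearly across the bad jump while remaining within the local range of $\tilde\L_s^{mn_0}v$; this keeps $\|w_{mn_0}\|_\infty\le\|\tilde\L_s^{mn_0}v\|_\infty\le\|v\|_\infty$ (the second inequality from $\tilde\L_\sigma 1=1$), and $\|\tilde\L_s^{mn_0}v-w_{mn_0}\|_\infty$ is bounded by the bad-jump mass computed above. Whenever $\rho^{mn_0}$ exceeds a fixed multiple of $|b|$ this delivers $\|\tilde\L_s^{mn_0}v-w_{mn_0}\|_\infty\le 2C_{10}\rho^{-mn_0}|b|\|v\|_\infty$; for smaller $m$ (so that $2C_{10}\rho^{-mn_0}|b|\ge 2$) the bound is immediate from $\|\tilde\L_s^{mn_0}v-w_{mn_0}\|_\infty\le 2\|v\|_\infty$.

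The hard part will be verifying that $(u_{mn_0},w_{mn_0})\in\CC_b$ for a suitable majorant $u_{mn_0}$, with the natural candidate being a constant slightly above $\|v\|_\infty$, so that $\Osc_I u_{mn_0}=0$ while $E_I(u_{mn_0})$ remains positive from the geometric tail over $X'_j$ with $j>k$. Since by construction $w_{mn_0}$ has no discontinuities outside $X_\infty$, the two remaining cone conditions---the exponential jump-size inequality~\eqref{eq:EJ} at points of $X_\infty$, and the oscillation bound on atoms of $\P_k$---should be checked by rerunning the ``created vs.\ propagated'' decomposition of Proposition~\ref{prop:inductive_jumpsize} and the five-term oscillation estimate from the proof of Lemma~\ref{lemma-invcone}, but with the inductive cone hypotheses on the input replaced by the trivial bounds $|v|\le\|v\|_\infty$, $\Size v\le 2\|v\|_\infty$, and the summed estimate $\sum_y\Size v(y)\le\Var v$ controlled through~\eqref{eq:H}. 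The delicate issue is that propagated jumps at $X'_j$ with $j$ substantially exceeding $mn_0$ cannot be handled term by term; they require the summed variance bound, and this is precisely why the constant $C_{11}$ in~\eqref{eq-cst11} is calibrated against the cone constants $C_7$, $C_8$, $C_{10}$.
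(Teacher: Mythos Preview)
Your decomposition has a genuine gap: removing only the ``bad'' jumps of $\tilde\L_s^{mn_0}v$ at points outside $X_\infty$ does not produce a function satisfying the cone oscillation condition. The cone requires $\Osc_I w_{mn_0}\le C_{10}|b|\Leb(I)\sup_I u_{mn_0}+C_8 E_I(u_{mn_0})$ for \emph{every} subinterval $I$ of an atom of $\P_k$, in particular for arbitrarily short $I$. But away from your modified neighbourhoods, $w_{mn_0}$ coincides with $\tilde\L_s^{mn_0}v$, and for a general $v\in\BV$ the term $\sum_{h}\Osc_{h(I)}v$ need not be $O(\Leb(I))$ at all (think of a Cantor staircase). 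Your proposed substitute, the summed bound $\sum_y\Size v(y)\le\Var v$ controlled via \eqref{eq:H}, is uniform in $I$ rather than linear in $\Leb(I)$, so it cannot close the oscillation inequality on small intervals. A second, related problem is the pointwise condition \eqref{eq:EJ}: discontinuities of $v$ lying in $X'_j$ for large $j$ propagate to $X'_{j+mn_0}$, and there the required bound $C_7\rho^{-(j+mn_0)}u$ is much stronger than what $\rho^{-3mn_0}\Size v$ gives; a summed variance bound does not rescue a pointwise condition.

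The paper's construction is different in kind. It does not try to repair $\tilde\L_s^{r}v$ locally; instead it replaces it wholesale by a \emph{piecewise affine interpolant} $w_r$ on a partition $Q_r$ of mesh $\sim\rho^{-r}$ that refines $\P_r$, matching one-sided limits at the partition endpoints (and similarly $u_r$ from $\tilde\L_\sigma^r|v|$). This kills both problems at once: $w_r$ has discontinuities only in $X_r$, so \eqref{eq:EJ} follows from the created-jump estimate \eqref{eq:sizeg}; and affinity gives a controlled slope on each $I_r$, so the oscillation condition holds on all subintervals. The price is that $\|g_r-w_r\|_\infty$ is now governed by $\Osc_{I_r}g_r$, and bounding the $v\circ h$ contribution there is exactly where \eqref{eq:H} enters, via the comparison $\|v\|_1\le \frac{K_1}{\Leb(I_r)}\int_{F^{-r}(I_r)}|v|\,d\Leb$ of Lemma~\ref{lem:Varvar}. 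That lemma is the missing ingredient in your outline and is what converts the global bound $\Var v\le C_{11}|b|^2\rho^{r}\|v\|_1$ into a local estimate proportional to $\Leb(I_r)\sup_{I_r}\tilde\L_\sigma^r|v|$.
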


The above result will allow us to prove

\begin{lemma}\label{lemma-L2BV} 
There exist  $\eps\in (0,1)$ and $\beta\in (0,1)$ such that 
for all $s=\sigma+ib$, $|\sigma|<\eps$, $|b|\geq\max\{4\pi/D,1\}$ and for all 
$m\geq 1$,
\[
\|\tilde \L_s^{3mn_0}v\|_b\leq (1+|b|)^{-1} \Var_Y(\tilde \L_s^{3mn_0}v)+(2C_{10}\rho^{-m n_0}|b|+\beta^{m})\|v\|_\infty.
\]
for all $v\in \BV(Y)$ satisfying \eqref{eq:H}.
\end{lemma}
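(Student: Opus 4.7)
The decomposition $\|\cdot\|_b = (1+|b|)^{-1}\Var_Y(\cdot) + \|\cdot\|_1$ shows that the variation term in the conclusion is present for free, so the entire content of the lemma is the $L^1$-bound
\begin{equation*}
\|\tilde\L_s^{3mn_0}v\|_1 \leq \bigl(2C_{10}\rho^{-mn_0}|b| + \beta^m\bigr)\|v\|_\infty
\end{equation*}
for some $\beta\in(0,1)$. The plan is to decompose $\tilde\L_s^{mn_0}v$ into a part that lies in the cone $\CC_b$ (to which the $L^2$-contraction of Lemma~\ref{lemma-L2} applies) and an exponentially small $L^\infty$-remainder.

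Concretely, I would first invoke Proposition~\ref{prop:to-cone} at exponent $m$ to write $\tilde\L_s^{mn_0}v = w_{mn_0} + r_{mn_0}$, with $(u_{mn_0},w_{mn_0})\in\CC_b$, $\|w_{mn_0}\|_\infty\leq\|v\|_\infty$ and $\|r_{mn_0}\|_\infty\leq 2C_{10}\rho^{-mn_0}|b|\,\|v\|_\infty$. Iterating the transfer operator gives
\begin{equation*}
\tilde\L_s^{3mn_0}v = \tilde\L_s^{2mn_0}w_{mn_0} + \tilde\L_s^{2mn_0}r_{mn_0}.
\end{equation*}
For the remainder, the pointwise inequality $|\tilde\L_s^n f|\leq\tilde\L_\sigma^n|f|$ (valid for real $\sigma$) combined with the eigenfunction identity $\tilde\L_\sigma^n(1)=1$ (a consequence of $\L_\sigma f_\sigma=\lambda_\sigma f_\sigma$) yields $\|\tilde\L_s^{2mn_0}r_{mn_0}\|_1\leq \Leb(Y)\,\|r_{mn_0}\|_\infty$, and the harmless constant $\Leb(Y)$ can be absorbed into the coefficient of the $\rho^{-mn_0}|b|$-term.

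For the cone piece, I would verify that $w_{mn_0}$ satisfies the hypothesis of Lemma~\ref{lemma-L2}, namely that $(u,w_{mn_0})$ with $u$ constant satisfies the EJ condition~\eqref{eq:EJ}. Since $(u_{mn_0},w_{mn_0})\in\CC_b$, we have $\Size w_{mn_0}(x)\leq C_7\rho^{-j}u_{mn_0}(x)\leq C_7\rho^{-j}\sup u_{mn_0}$, so EJ holds with $u:=\sup u_{mn_0}$. A brief inspection of the construction of $u_{mn_0}$ inside Proposition~\ref{prop:to-cone} (iterating $\tilde\L_\sigma^{n_0}(\chi\,\cdot)$ from a seed bounded by $\|v\|_\infty$) shows that $\sup u_{mn_0}$ is controlled by a universal multiple of $\|v\|_\infty$. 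Lemma~\ref{lemma-L2} (applied with exponent $2m$) then gives $\|\tilde\L_s^{2mn_0}w_{mn_0}\|_2\leq\beta_0^m\sup u_{mn_0}$ for some $\beta_0\in(0,1)$, and Cauchy--Schwarz converts this into $\|\tilde\L_s^{2mn_0}w_{mn_0}\|_1\leq C\,\Leb(Y)^{1/2}\beta_0^m\|v\|_\infty$.

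Finally, choosing $\beta\in(\beta_0,1)$ close enough to $1$ to absorb the remaining multiplicative constant uniformly in $m\geq 1$ (if necessary by first passing to a sufficiently high iterate of $\tilde\L_s^{n_0}$ to bring the effective $L^2$-rate below the inverse square root of $\Leb(Y)$ times any fixed constant) completes the proof. The main obstacle is exactly the verification just indicated: that the EJ condition transfers from $\CC_b$-membership with the non-constant companion $u_{mn_0}$ to the constant-$u$ form required by Lemma~\ref{lemma-L2}, together with the tracking of $\sup u_{mn_0}$ in terms of $\|v\|_\infty$ through the iterative cone construction in Proposition~\ref{prop:to-cone}.
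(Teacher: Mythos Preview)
Your approach is correct and essentially identical to the paper's: split off the variation term, apply Proposition~\ref{prop:to-cone} at level $m$ to decompose $\tilde\L_s^{mn_0}v = w_{mn_0} + r_{mn_0}$, bound $\tilde\L_s^{2mn_0}r_{mn_0}$ in $L^1$ via the $L^\infty$ estimate and $\tilde\L_\sigma^n 1 = 1$, and feed $w_{mn_0}$ into Lemma~\ref{lemma-L2} with exponent $2m$. The paper is slightly more direct in that it invokes $\|w_{mn_0}\|_\infty \leq \|v\|_\infty$ straight from Proposition~\ref{prop:to-cone} instead of tracking $\sup u_{mn_0}$; note also that inside that proposition $u_{mn_0}$ is built as an affine interpolation of $\bar g_{mn_0} = \tilde\L_\sigma^{mn_0}|v|$ (hence bounded by $\|v\|_\infty$ because $\tilde\L_\sigma 1 = 1$), not by iterating $\tilde\L_\sigma^{n_0}(\chi\,\cdot)$ as you wrote---that iteration lives in the proof of Lemma~\ref{lemma-L2}, not Proposition~\ref{prop:to-cone}.
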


\begin{proof}[Proof of Proposition~\ref{prop:to-cone}]
Let $v \in \BV(Y)$ be arbitrary and take $r = m n_0$ 
(this is a multiple of $k$ because $n_0$ is).
Write $g_r = \tilde \L_s^rv$ and $\bar g_r = \tilde\L_{\sigma}^r |v|$; for every fixed $b\in\R$,
they belong to $\BV(Y)$ as well by Proposition~\ref{prop-LYineq}. 
Therefore $g_r$ has at most countably many discontinuity points, which we denote by $\{ x_i \}_{i \in \N}$. 
Assume throughout this proof that $g_r$ is continuous from the right;
this can be achieved by adjusting $g_r$ at $\{ x_i \}_{i \in \N}$, so it has no effect on the $L^p$-norm for any $p \in [1,\infty]$.

To estimate the jump-size $|a_i|$ of $g_r$ at $x_i \in X'_j$
for some $j \leq r$, 
we note that this discontinuity is created
by non-onto branches of $F^r$, and there exist $y \in X'_1$
and an inverse branch $\tilde h \in \H_{j-1}$ such that
 $y_i = \tilde h(x_i)$. The jump-size of $\tilde \L_s^rv$ at $x_i$
can be expressed as a sum of $h \in \H_{r-(j-1)}$ which in the summand
is composed with $\tilde h$. Then
\begin{align}\label{eq:sizeg}
\Size& \tilde \L_s^r v(x_i) \leq
\sum_{h \in \H_{r-(j-1)}} \!\!\!\!\!\! |(h \circ \tilde h)'(x_i)|\
|e^{s \varphi_{r-(j-1)} \circ h \circ \tilde h(x_i) + s \varphi_{j-1} \circ \tilde h(x_i)}|\
\frac{(f_{\sigma} v) \circ h \circ \tilde h(x_i)}{\lambda_{\sigma}^r f_{\sigma}(x_i)} \nonumber \\
&= \sum_{h \in \H_{r-(j-1)}} \!\!\!\!\!\! |h'(y_i)|\ e^{\sigma \varphi_{r-(j-1)} \circ h(y_i)} 
\frac{(f_{\sigma} v) \circ h(y_i)}{\lambda_{\sigma}^{r-(j-1)} f_{\sigma}(y_i)}  
|\tilde h'(x_i)| \ e^{\sigma \varphi_{j-1} \circ \tilde h(x_i)} 
\frac{f_{\sigma}(y_i)}{\lambda_{\sigma}^{j-1}f_{\sigma}(x_i)} \nonumber \\
&\leq \Big( \sum_{h \in \H_{n-(j-1)}}\!\!\!\!\!\! |h'(y_i)|\
e^{\sigma \varphi_{r-(j-1)} \circ h(y_i)} 
\frac{f_{\sigma} \circ h(y_i)}{\lambda_{\sigma}^{r-(j-1)} f_{\sigma}(y_i)}\Big) \  \| v \|_\infty\
\rho^{-3(j-1)}\ \frac{\sup f_{\sigma}}{\inf f_{\sigma}} \nonumber \\
&\leq \| v \|_\infty\ \rho^3\ \frac{\sup f_{\sigma}}{\inf f_{\sigma}} \rho^{-3j}.
\end{align}
where the sum in brackets in the penultimate line is $1$ because $f_{\sigma}$ 
is an eigenfunction of $\L_{\sigma}$.

For $r>k$, let $Q_{r}$ be an interval partition of $Y$ refining $\P_r$ such that
$\frac12 \rho^{-r} < \Leb(I_{r}) < 2 \rho^{-r}$ for every $I_{r} \in Q_{r}$. 
In fact, by adjusting
$Q_r$ by an arbitrary small amount if necessary, 
we can assume that
$g_r$ and $\bar g_r$ are continuous at every point in $\partial I_r \setminus X_r$, $I_r \in Q_r$.
Construct $w_r$ and $u_r$ to be affine on each $(p,q) = I_r \in Q_r$ such that
$$
\lim_{x \downarrow p} w_r(x) = \lim_{x \downarrow p} g_r(x)
\quad \text{ and } \quad
\lim_{x \uparrow q} w_r(x) = \lim_{x \uparrow q} g_r(x)
$$
and similarly
$$
\lim_{x \downarrow p} u_r(x) = \lim_{x \downarrow p} \bar g_r(x)
\quad \text{ and } \quad
\lim_{x \uparrow q} u_r(x) = \lim_{x \uparrow q} \bar g_r(x).
$$
Then $w_r$ and $u_r$ are continuous on $Y \setminus X_r$ and 
as $\bar g_r \geq |g_r|$, it is immediate that
$u_r \geq |w_r|$ on $Y$.
The main estimate now concerns the oscillation
$$
\Osc_{I_r} g_r = \Osc_{I_r} \left( \sum_{h \in \H_r, I_r \subset \dom(h)}
\frac{e^{s\varphi_r \circ h} |h'|}{\lambda_{\sigma}^r f_{\sigma}} (f_{\sigma} v) \circ h \right)
\quad \text{ for } I_r \in Q_r,
$$
which we will split into five terms similar to the proof of the invariance
of the cone.\\[2mm]
{\bf The term with $|h'|$} is bounded above by
$C_1 e^{C_1} \Leb(I_r) \sup_{x \in I_r} \tilde \L_{\sigma}^r|v|$
as in \eqref{eq:O1}.\\[2mm]
{\bf The term with $e^{s \varphi_n \circ h}$} is bounded above by
$(1+|\sigma|) e^{\sigma C'_2} C'_2 |b| \Leb(I_r) \sup_{x \in I_r} \tilde \L_{\sigma}^r|v|$
as in \eqref{eq:O2}.\\[2mm]
{\bf The term with $1/f_{\sigma}$} is bounded above, by combining
\eqref{eq:O4} and \eqref{eq:E3}, by
$$
C_6 \Leb(I_r) \sup_{x \in I_r} \tilde \L_{\sigma}^r |v| +
C_7 \Leb(I_r) \sum_{j > r} \rho^{-j} \sum_{x \in X'_j \cap I_r} \tilde \L_{\sigma}^r|v|(x).
$$
Here the second term is bounded by
$C_7  N_1 \frac{\rho^{-r}}{\rho-1} \sup_{x \in I_r} \tilde \L_{\sigma}^r|v|
\leq 2C_7\frac{N_1}{\rho-1} \Leb(I_r) \sup_{x \in I_r} \tilde \L_{\sigma}^r|v|$,
where we recall that $\# X'_j \leq N_1$ for all $j \geq 1$.
\\[2mm]
{\bf The term with $f_{\sigma} \circ h$} is bounded above, by combining
\eqref{eq:O3} and \eqref{eq:E2} and arguing as in the
previous case, by
$$
C_6 \rho_0^{-r} \Leb(I_r) \sup_{x \in I_r} \tilde\L_{\sigma}^r|v| + C_7 \sum_{j > r}
\rho^{-j}\!\!\!  \sum_{x \in X'_j \cap I_r} \tilde \L_{\sigma}^r|v|(x)
\leq
(C_6 \rho_0^{-r} + 2C_7 \frac{N}{\rho-1}) \Leb(I_r) \sup_{x \in I_r} \tilde \L_{\sigma}^r|v|.
$$
{\bf The term with $v \circ h$}:
First we treat the case $\sigma \geq 0$. 
By Lemma~\ref{lem:Varvar} (which also gives
a lower bound $r_0$ for $r$)
\[
\|v\|_1 \leq \frac{K_1}{\Leb(I_r)} \int_{F^{-r}(I_r)} |v|\, d\Leb \quad
\text{ for all }\, I_r \in Q_r,
\] 
where $K_1 = 6e^{C_1}/\eta$. Recall that \eqref{eq:H} holds with $C_{11}>1$ as defined in~\eqref{eq-cst11}. Compute that
\begin{align*}
\sum_{\stackrel{h \in \H_r}{I_r \subset \dom(h)}} & \left(
\sup_{x \in I_r} \frac{|e^{s \varphi_r \circ h}| \, |h'|}{\lambda_{\sigma}^r f_{\sigma}} f_{\sigma} \circ h \right) \Osc_{I_r}(v \circ h) 
\leq \rho^{-3r} \frac{\sup f_{\sigma}}{\inf f_{\sigma}} 
\sum_{\stackrel{h \in \H_r}{I_r \subset\dom(h)}} \Osc_{h(I_r)} v\\
\leq\ & \rho^{-3r} \frac{\sup f_{\sigma}}{\inf f_{\sigma}} \Var_{F^{-r}(I_r)} v\leq 2\rho^{-2r} \Leb(I_r) \frac{\sup f_{\sigma}}{\inf f_{\sigma}} \Var_{Y} v\\
\leq\ & 2\rho^{-2r} \Leb(I_r) \frac{\sup f_{\sigma}}{\inf f_{\sigma}}\,
C_{11} |b|^2 \rho^r  \int_Y |v|\, d\Leb \\
\leq\ & 2 C_{11} |b|^2 K_1\rho^{-r} \frac{\sup f_{\sigma}}{\inf f_{\sigma}}\int_{F^{-r}(I_r)} |v| \, d\Leb\\
\leq\ & 2C_{11}|b|^2 K_1\rho^{-r}\left(\frac{\sup f_{\sigma}}{\inf f_{\sigma}}\right)^2 
\sum_{\stackrel{h \in \H_r}{I_r \subset\dom(h)}}
\int_{I_r}  \frac{ |h'|}{f_{\sigma}} (f_{\sigma} |v|) \circ h \, d\Leb.
\end{align*}
Because $\sigma \geq 0$, we can continue as
\begin{align*}
\sum_{\stackrel{h \in \H_r}{I_r \subset\dom(h)}} &
\Big( \sup_{x \in I_r} \frac{|e^{s \varphi_r \circ h}| \, |h'|}{\lambda_{\sigma}^r f_{\sigma}}  f_{\sigma} \circ h \Big) \Osc_{I_r}(v \circ h) \\
\leq\ &
2 C_{11} |b|^2 K_1\rho^{-r} \lambda_{\sigma}^r 
\left(\frac{\sup f_{\sigma}}{\inf f_{\sigma}}\right)^2 
\sum_{\stackrel{h \in \H_r}{I_r \subset\dom(h)}}
\int_{I_r} 
\frac{e^{\sigma \varphi_r \circ h} |h'|}{\lambda_{\sigma}^r f_{\sigma}} 
(f_{\sigma} |v|) \circ h \, d\Leb\\
\leq\ & 2 C_{11}|b|^2 K_1\rho^{-r} \lambda_{\sigma}^r \left(\frac{\sup f_{\sigma}}{\inf f_{\sigma}}\right)^2 
\Leb(I_r) \sup_{x \in I_r} \tilde \L_{\sigma}^r|v|.
\end{align*}
Since $\rho > \lambda_{\sigma}$, we obtain 
the upper bound $\Leb(I_r) \sup_{x \in I_r} \tilde \L_{\sigma}^r|v|$
by taking $r$ sufficiently large.

Now we treat the case $\sigma < 0$. 
By Lemma~\ref{lem:Varvar}
applied to $e^{\sigma \varphi_r} v$ (and with the same lower bound $r_0$ for $r$ as before)
\[
\| e^{\sigma \varphi_r} v\|_1 \leq \frac{K_1}{\Leb(I_r)} \int_{F^{-r}(I_r)} |e^{\sigma \varphi_r} v|\, d\Leb \quad
\text{ for all }\, I_r \in Q_r.
\] 
Note that 
\begin{align*}
\sum_{\stackrel{h \in \H_r}{I_r \subset\dom(h)}} & 
\left(\sup_{x \in I_r} \frac{|e^{s \varphi_r \circ h}| \, |h'|}{\lambda_{\sigma}^r f_{\sigma}} f_{\sigma} \circ h \right) \Osc_{I_r}(v \circ h)  \\
\leq \ & e^{\eps C'_2} 
\sum_{\stackrel{h \in \H_r}{I_r \subset\dom(h)}} 
\left( \sup_{x \in I_r} \frac{|h'|}{\lambda_{\sigma}^r f_{\sigma}} f_{\sigma} \circ h \right) \Osc_{I_r}( (e^{\sigma \varphi_r} v) \circ h) \\
\leq \ & e^{\eps C'_2} \lambda_{\sigma}^{-r} \frac{\sup f_{\sigma}}{\inf f_{\sigma}}
\rho_0^{-r}\ \Osc_{I_r}( (e^{\sigma \varphi_r} v) \circ h).
\end{align*}
Estimating the oscillation as in the case $\sigma \geq 0$, and using
 \eqref{eq:H}, we 
find the upper bound
\begin{align*}
\sum_{\stackrel{h \in \H_r}{I_r \subset\dom(h)}} & 
\left(\sup_{x \in I_r} \frac{|e^{s \varphi_r \circ h}| \, |h'|}{\lambda_{\sigma}^r f_{\sigma}} f_{\sigma} \circ h \right) \Osc_{I_r}(v \circ h)  \\
\leq\ & 2e^{\eps C'_2} C_{11}|b|^2 K_1\rho^{-3r}
\left(\frac{\sup f_{\sigma}}{\inf f_{\sigma}}\right)^2 
\sum_{\stackrel{h \in \H_r}{I_r \subset\dom(h)}}
\int_{I_r}  \frac{ e^{\sigma \varphi_r \circ h} |h'|}{\lambda_{\sigma}^r f_{\sigma}} (f_{\sigma} |v|) \circ h \, d\Leb \\
\leq\ & 2e^{\eps C'_2} C_{11}|b|^2 K_1\rho^{-3r}
\left(\frac{\sup f_{\sigma}}{\inf f_{\sigma}}\right)^2 \Leb(I_r)
\sup_{x \in I_r} \tilde \L_{\sigma}^r |v|.
\end{align*}
By taking $r$ sufficiently large, we obtain again 
 the upper bound $\Leb(I_r) \sup_{x \in I_r} \tilde \L_{\sigma}^r|v|$,
and this finishes the case $\sigma < 0$.

Putting all terms together,
\begin{equation}\label{eq:oscg}
\Osc_{I_r} g_r \leq C_{10} |b| \ \Leb(I_r) \sup_{I_r} \tilde \L_{\sigma}^r|v|,
\end{equation}
and since $w_r$ is an affine interpolation of $g_r$, with the same limit values
at all points $x_i \in X_r$,
\[
\| g_r - w_r\|_\infty \leq
 C_{10} |b| \ \Leb(I_r) \sup_{I_r} \tilde \L_{\sigma}^r|v| \leq
2 C_{10} |b| \rho^{-r} \| v \|_\infty.
\]
 Also, since $w_r$ is an affine interpolation of $g_r$,
we have $\| w_r \| \leq \| g_r \|_\infty \leq \| v \|_\infty$.

We still need to complete the argument why $(u_r, w_r) \in \CC_b$. By \eqref{eq:oscg},  the affine function 
$w_r|_{I_r}$ has slope 
$C_{10} |b| \sup_{I_r} \tilde\L_{\sigma}^r|v| = C_{10} |b| \sup_{I_r} |u_r|$.
This means that for every subinterval $I \subset I_r$,
we also have
$$
\Osc_I w_r \leq C_{10} |b| \Leb(I) \sup_I u_r.
$$
If on the other hand, $I$ intersects several contiguous $I_r \in Q_r$
(but is contained in an atom of $\P_k$),
then we have to include the jump-sizes of discontinuity
points at $\partial I_r$ as well. But since $Q_r$ refines $\P_r$
and $g_q$ is continuous at all boundary points $q \in \partial I_r \setminus X_r$, and the jump-sizes of $g_r$ and $w_r$
coincide at every $x_i \in X'_j$ (and decrease exponentially
in $j$ by \eqref{eq:sizeg})
we conclude that
$$
\Osc_I w_r \leq C_{10} |b| \Leb(I) \sup_I u_r + C_8 E_I(u_r).
$$
This shows that $(u_r, w_r) \in \CC_b$, as required.
\end{proof}

\begin{proof}[Proof of Lemma~\ref{lemma-L2BV}]
For $m \geq 1$ let $(w_{mn_0}, u_{mn_0})\in\CC_b$ be as in the statement of Proposition~\ref{prop:to-cone}.
Let $v\in \BV$.
Using the  definition of $\|\,\|_b$ norm, 
\begin{align*}
\|\tilde \L_s^{3mn_0} v\|_b&=(1+|b|)^{-1}\Var_Y(\tilde \L_s^{3mn_0} v)+\|\tilde \L_s^{3mn_0}v\|_1\\
&\leq (1+|b|)^{-1}\Var_Y(\tilde \L_s^{3mn_0} v)+\| \tilde \L_s^{2mn_0}(\tilde \L_s^{mn_0} v-w_{mn_0})\|_1+\|\tilde \L_s^{2mn_0}w_n\|_1\\
&\leq (1+|b|)^{-1}\Var_Y(\tilde \L_s^{3mn_0} v)+2C_{10}\rho^{-mn_0}|b|\|v\|_\infty+\beta^{m}\|w_{mn_0}\|_\infty,
\end{align*}
where in the last inequality we have used Proposition~\ref{prop:to-cone} and Lemma~\ref{lemma-L2}. The conclusion follows since
$\|w_{mn_0}\|_\infty\leq \| v\|_\infty$ (as in the statement of Proposition~\ref{prop:to-cone}).~\end{proof}

\subsection{Completing the argument}
\label{sub-Completing the argument}

In this section we complete the proof of Theorem~\ref{th-main} via a couple of lemmas.

\begin{lemma}\label{lemma-complete-1}
There exist  $\eps\in (0,1)$, $A>0$ and $\gamma_1\in (0,1)$ such that 
for all $s=\sigma+ib$, $|\sigma|<\eps$, $|b|\geq\max\{4\pi/D,2\}$ and 
for all $m \geq A\log(1+|b|)$,
\[
\|\tilde \L_s^{3mn_0} v\|_b\leq \gamma_1^{3m}\|v\|_b
\]
for all $v \in \BV(Y)$ satisfying \eqref{eq:H}.
\end{lemma}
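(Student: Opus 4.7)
The plan is to bound $\|\tilde\L_s^{3mn_0}v\|_b$ by combining Lemma~\ref{lemma-L2BV} with the Lasota--Yorke inequality of Proposition~\ref{prop-LYineq}, applied after the split $\tilde\L_s^{3mn_0}=\tilde\L_s^{mn_0}\circ\tilde\L_s^{2mn_0}$. The split is essential: a direct application of Proposition~\ref{prop-LYineq} to the full iterate $\tilde\L_s^{3mn_0}$, combined with the only available interpolations $\|v\|_\infty\le (2+|b|)\|v\|_b$ and $\|v\|_1\le\|v\|_b$, produces a contribution of order $c(1+|b|)^{1/2}\Lambda_\sigma^{3mn_0}\|v\|_b$ on the right-hand side; since $\Lambda_\sigma\ge 1$ this gives a non-negative power of $(1+|b|)$ which can never be dominated by $\gamma_1^{3m}$ for any $\gamma_1<1$. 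Splitting reduces the $\Lambda_\sigma$-power from $3mn_0$ to $mn_0$ (negligible for $\sigma$ small since $\log\Lambda_\sigma=O(\sigma^2)$) and replaces $\sqrt{\|v\|_\infty\|v\|_1}$ by $\sqrt{\|\tilde\L_s^{2mn_0}v\|_\infty\|\tilde\L_s^{2mn_0}v\|_1}$, where the $L^1$-factor is already exponentially small by Proposition~\ref{prop:to-cone}.

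Concretely, Lemma~\ref{lemma-L2BV} reduces the task to bounding $(1+|b|)^{-1}\Var_Y(\tilde\L_s^{3mn_0}v)$ and the extra term $(2C_{10}\rho^{-mn_0}|b|+\beta^m)\|v\|_\infty$; the latter is handled by $\|v\|_\infty\le (2+|b|)\|v\|_b$. For the variance, I would apply Proposition~\ref{prop-LYineq} to the outer $mn_0$ block to obtain
$$
\Var_Y(\tilde\L_s^{3mn_0}v)\le \rho^{-mn_0}\Var_Y(\tilde\L_s^{2mn_0}v)+c(1+|b|)\Lambda_\sigma^{mn_0}\bigl(\|\tilde\L_s^{2mn_0}v\|_\infty\|\tilde\L_s^{2mn_0}v\|_1\bigr)^{1/2}.
$$
To control $\|\tilde\L_s^{2mn_0}v\|_1$ I would imitate the proof of Lemma~\ref{lemma-L2BV}: split $2mn_0=mn_0+mn_0$, approximate $\tilde\L_s^{mn_0}v$ by the cone element $w_{mn_0}\in\CC_b$ from Proposition~\ref{prop:to-cone} up to $L^\infty$-error $2C_{10}\rho^{-mn_0}|b|\|v\|_\infty$, handle the error via $L^1$-boundedness of $\tilde\L_s^{mn_0}$, and handle $w_{mn_0}$ via Cauchy--Schwarz together with Lemma~\ref{lemma-L2}; this yields $\|\tilde\L_s^{2mn_0}v\|_1\le C(\rho^{-mn_0}|b|+\beta^{m/2})\|v\|_\infty$. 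For $\|\tilde\L_s^{2mn_0}v\|_\infty$ I would use $\|\cdot\|_\infty\le \Var_Y+\|\cdot\|_1/\Leb(Y)$ and apply Proposition~\ref{prop-LYineq} once more with exponent $2mn_0$ and the same generic interpolations.

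Substituting $m=A\log(1+|b|)$ converts every relevant quantity into a power of $(1+|b|)$: $\rho^{-jmn_0}=(1+|b|)^{-jAn_0\log\rho}$, $\beta^{jm}=(1+|b|)^{jA\log\beta}$, and $\Lambda_\sigma^{jmn_0}=(1+|b|)^{jAn_0\log\Lambda_\sigma}$. Shrinking $\eps$ makes the last exponent negligible, while enlarging $A$ makes the $\rho^{-1}$- and $\beta$-decays dominate, so every term in the resulting estimate for $\|\tilde\L_s^{3mn_0}v\|_b$ takes the form $C(1+|b|)^{e}\|v\|_b$ with $e<0$. This gives $\|\tilde\L_s^{3mn_0}v\|_b\le C(1+|b|)^{-\delta}\|v\|_b$ for some uniform $\delta>0$; setting $\gamma_1:=e^{-\delta/(6A)}\in(0,1)$ yields $\gamma_1^{3m}=(1+|b|)^{-\delta/2}$, and by further increasing $A$ one can make $\delta$ large enough that the multiplicative constant is absorbed at $|b|=\max\{4\pi/D,2\}$. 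Repeating the exponent check with $m=A'\log(1+|b|)$ for arbitrary $A'\ge A$ shows that every exponent only becomes more negative, so a single $\gamma_1$ works uniformly for all $m\ge A\log(1+|b|)$.

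The main obstacle is precisely the $c(1+|b|)^{1/2}\Lambda_\sigma^{3mn_0}$ term produced by a naive Lasota--Yorke estimate; the cancellation between the split factor $\Lambda_\sigma^{mn_0}$ and the exponentially small $\|\tilde\L_s^{2mn_0}v\|_1$ delivered by Proposition~\ref{prop:to-cone} and Lemma~\ref{lemma-L2} is what finally converts a formally divergent estimate into the contracting bound $(1+|b|)^{-\delta}$.
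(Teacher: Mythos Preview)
Your proposal is correct and follows essentially the same approach as the paper: split $3mn_0 = mn_0 + 2mn_0$, apply the Lasota--Yorke inequality (Proposition~\ref{prop-LYineq}) to the outer block, control $\|\tilde\L_s^{2mn_0}v\|_1$ via Proposition~\ref{prop:to-cone} and Lemma~\ref{lemma-L2}, apply Proposition~\ref{prop-LYineq} once more to handle $\Var_Y(\tilde\L_s^{2mn_0}v)$, then combine with Lemma~\ref{lemma-L2BV} and balance exponents for $m\ge A\log(1+|b|)$. One simplification the paper uses that you miss: since $\tilde\L_\sigma 1=1$, the trivial bound $\|\tilde\L_s^{2mn_0}v\|_\infty\le\|v\|_\infty$ holds directly, so your detour through $\|\cdot\|_\infty\le\Var_Y+\|\cdot\|_1/\Leb(Y)$ for the $L^\infty$-factor is unnecessary and only inflates the powers of $(1+|b|)$ you must later absorb.
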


\begin{proof}
First, we estimate $(1+|b|)^{-1}\Var_Y(\tilde \L_s^{3mn_0} v)$.
For $m \in \N$, recall from Proposition~\ref{prop:to-cone} 
and Lemma~\ref{lemma-L2} that
\begin{eqnarray*}
\| \tilde \L_s^{2mn_0} v \|_1 &\leq&
\|\tilde \L_s^{mn_0} (\tilde \L_s^{mn_0} v - w_{mn_0}) \|_1 + \| \tilde \L_s^{mn_0} w_{mn_0} \|_1 \\
&\leq& \|\tilde \L_s^{mn_0} (\tilde \L_s^{mn_0} v - w_{mn_0}) \|_\infty
+ \beta^m \| w_{mn_0} \|_\infty \\
&\leq& 2C_{10} \rho^{-mn_0} \| v \|_\infty +  \beta^m \| v \|_\infty
\leq 4\beta^m \| v \|_\infty
\end{eqnarray*}
where we used $C_{10} \rho^{-mn_0} \leq 2\beta^m$.
By Proposition~\ref{prop-LYineq} (which is allowed since $n_0$ is a multiple of $k$)
and recalling that $\Lambda_{\sigma} := \lambda_{2\sigma}^{1/2}/\lambda_{\sigma} \geq 1$,
we compute
\begin{align}\label{eq-varb}
\nonumber \Var_Y(\tilde \L_s^{3mn_0} v)
&\leq\rho^{-mn_0}\Var_Y(\tilde \L_s^{2mn_0} v)+c(1+|b|) \Lambda_{\sigma}^{mn_0}(\|\tilde \L_s^{2mn_0} v\|_1\,\|\tilde \L_s^{2mn_0} v\|_\infty)^{1/2}\\
\nonumber &\leq \rho^{-mn_0}\Var_Y(\tilde \L_s^{2mn_0} v)+2c(1+|b|) \Lambda_{\sigma}^{mn_0}\beta^{m/2}\| v\|_\infty\\
&\leq \rho^{-mn_0}\Var_Y(\tilde \L_s^{2mn_0} v)+2c(1+|b|) \Lambda_{\sigma}^{mn_0}\beta^{m/2}(\Var_Y v+\|v\|_{1}).
\end{align}
where in the last inequality we have used $\|v\|_\infty\leq \Var_Y v+\|v\|_{1}$.
Also by Proposition~\ref{prop-LYineq},
\begin{align*}
\Var_Y(\tilde \L_s^{2mn_0} v) &\leq \rho^{-2mn_0}\Var_Yv+c(1+|b|) \Lambda_{\sigma}^{2mn_0} \|v\|_\infty\\
&\leq  \rho^{-2mn_0}\Var_Yv+c(1+|b|)  \Lambda_{\sigma}^{2mn_0} (\Var_Y v+\|v\|_{1}).
\end{align*}
Plugging the above inequality into~\eqref{eq-varb} we get
\begin{align*}
\Var_Y(&\tilde \L_s^{3mn_0} v)\leq \rho^{-3mn_0}
\Var_Y v+c(1+|b|)(\rho^{-mn_0} \Lambda_{\sigma}^{2mn_0}+2\Lambda_{\sigma}^{mn_0}\beta^{m/2}) (\Var_Y v+\|v\|_{1}).
\end{align*}
Multiplying this $(1+|b|)^{-1}$ and inserting it in
 Lemma~\ref{lemma-L2BV} 
(which relies on the assumption \eqref{eq:H}) gives
\begin{align*}
\|\tilde \L_s^{3mn_0} v\|_b \leq\ &  (1+|b|)^{-1}\rho^{-3mn_0}\Var_Y v+c(\rho^{-mn_0}  \Lambda_{\sigma}^{2mn_0} +2 \Lambda_{\sigma}^{mn_0}\beta^{m/2}) (\Var_Y v+\|v\|_{1})\\
&+(2C_{10}\rho^{-mn_0}|b|+\beta^{m})(\Var_Y v+\|v\|_{1}).
\end{align*}
Hence,
\begin{align*}
\|\tilde \L_s^{3mn_0} v\|_b \leq &\ (1+|b|)^{-1}\Big(\rho^{-3mn_0} +  (1+|b|)(c \Lambda_{\sigma}^{2mn_0} \rho^{-mn_0} \\
& \qquad \qquad \quad  + 
2c \Lambda_{\sigma}^{mn_0}\beta^{m/2}+2C_{10}|b|\rho^{-mn_0} + \beta^m) \Big) \Var_Y v\\
&+(c \Lambda_{\sigma}^{2mn_0} \rho^{-mn_0} + 2c \Lambda_{\sigma}^{mn_0}\beta^{m/2}+2C_{10}|b|\rho^{-mn_0} + \beta^m) \|v\|_1 \\
\leq &\ (1+|b|)^2 (2C_{10} + c) (\Lambda_{\sigma}^{2mn_0} \rho^{-m n_0} + \Lambda_\sigma^{m n_0} \beta^{m/2})\| v \|_b.
\end{align*}
Let $A>0$ be so large that $\gamma_1 := \max\{ \Lambda_{\sigma}^{2n_0}\rho^{-1}, \Lambda_{\sigma}^{n_0} \beta^{1/2}  \} 
\exp( \frac{6\log(2C_0+c)}{A})<1$. Then $(1+|b|)^2 (2C_{10} + c) (\Lambda_{\sigma}^{2mn_0} \rho^{-m n_0} + \Lambda_\sigma^{m n_0} \beta^{m/2})<\gamma_1^m$ for all $m>A\log(1+|b|)$,
and the conclusion follows.
\end{proof}

To complete the proof of Theorem~\ref{th-main} we still need to deal with \BV\ functions violating \eqref{eq:H}.

\begin{lemma}\label{lemma-complete-2}
There exist $\eps\in (0,1)$ and $\gamma_2 \in (0,1)$ such that 
for all $s=\sigma+ib$, $|\sigma|<\eps$, $|b|\geq\max\{4\pi/D,2\}$
and for all $m \geq 1$,
\[
\|\tilde \L_s^{m n_0}v \|_b\leq \gamma_2^m \| v \|_b
\]
for all $v \in \BV(Y)$ violating \eqref{eq:H}.
\end{lemma}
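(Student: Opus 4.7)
The strategy is to exploit the fact that violating \eqref{eq:H} means $\|v\|_1$ is \emph{very small} relative to $\Var_Y v$, and to feed this smallness into the Lasota--Yorke inequality so that the ``lower order'' term $(\|v\|_\infty\|v\|_1)^{1/2}$ actually beats the principal term $\rho^{-mn_0}\Var_Y v$, while using $L^1$--boundedness of the normalized operator to transport the small $L^1$--norm through the iterates.

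Throughout I focus on the case $\sigma\geq 0$; the case $\sigma<0$ is completely analogous with $v$ replaced by $e^{\sigma\varphi_{mn_0}}v$ at the appropriate places, using the identity $\tilde\L_s(v)=\frac{1}{\lambda_\sigma f_\sigma}\L(e^{ib\varphi_n}(e^{\sigma\varphi_n}f_\sigma v))$ to see that the natural quantity is $e^{\sigma\varphi_{mn_0}}f_\sigma v$; all constants change only by factors of $e^{\varepsilon C_2'}$ which are harmless for $|\sigma|<\varepsilon$ small.

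First, since \eqref{eq:H} fails, $\|v\|_1 < C_{11}^{-1}|b|^{-2}\rho^{-mn_0}\Var_Y v$. Combined with $\|v\|_\infty\leq \Var_Y v+\|v\|_1\leq 2\Var_Y v$, this gives
\[
(\|v\|_\infty\|v\|_1)^{1/2}\leq \sqrt{2}\, C_{11}^{-1/2}|b|^{-1}\rho^{-mn_0/2}\Var_Y v.
\]
Inserting this into Proposition~\ref{prop-LYineq} (applicable since $n_0$ is a multiple of $k$),
\[
\Var_Y(\tilde\L_s^{mn_0}v)\leq \rho^{-mn_0}\Var_Y v+\sqrt{2}\,c\,(1+|b|)|b|^{-1}C_{11}^{-1/2}\Lambda_\sigma^{mn_0}\rho^{-mn_0/2}\Var_Y v.
\]
The definition \eqref{eq-cst11} of $C_{11}$ was engineered precisely so that $cC_{11}^{-1/2}\leq 1/8$, and for $|b|\geq 2$ we have $(1+|b|)/|b|\leq 3/2$; moreover, shrinking $\varepsilon$ gives $\Lambda_\sigma^{n_0}\leq \rho^{n_0/4}$ since $\Lambda_\sigma=1+O(\sigma^2)$. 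Hence
\[
\Var_Y(\tilde\L_s^{mn_0}v)\leq \bigl(1+\tfrac{3\sqrt 2}{16}\bigr)\rho^{-mn_0/4}\Var_Y v=:C_{16}\rho^{-mn_0/4}\Var_Y v.
\]

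Next, since $|\tilde\L_s u|\leq \tilde\L_\sigma|u|$ and $\tilde\L_\sigma$ is the transfer operator for $F$ with respect to a probability measure equivalent to $\Leb$ (via $f_\sigma$ and the dual eigenfunction $f_\sigma^*$, both bounded above and below for $|\sigma|<\varepsilon$), there is a constant $C_{17}$ with $\|\tilde\L_\sigma^n u\|_1\leq C_{17}\|u\|_1$ uniformly in $n$. Combining with the violation of \eqref{eq:H},
\[
\|\tilde\L_s^{mn_0}v\|_1\leq C_{17}\|v\|_1\leq C_{17}C_{11}^{-1}|b|^{-2}\rho^{-mn_0}\Var_Y v.
\]
Using $|b|^{-2}(1+|b|)\leq 3/4$ for $|b|\geq 2$, I obtain
\[
\|\tilde\L_s^{mn_0}v\|_b=\frac{\Var_Y(\tilde\L_s^{mn_0}v)}{1+|b|}+\|\tilde\L_s^{mn_0}v\|_1\leq \bigl(C_{16}+\tfrac{3}{4}C_{17}C_{11}^{-1}\rho^{-3mn_0/4}\bigr)\rho^{-mn_0/4}\|v\|_b\leq C_{18}\rho^{-mn_0/4}\|v\|_b,
\]
for an absolute constant $C_{18}$. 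Finally, since the underlying $n_0$ may be replaced by any sufficient multiple (UNI propagates to multiples of $n_0$ with the same $D$ up to adjustments already absorbed in the constants), I may assume $\rho^{n_0/4}>C_{18}$ and set $\gamma_2:=C_{18}\rho^{-n_0/4}<1$, which yields $\|\tilde\L_s^{mn_0}v\|_b\leq \gamma_2^m\|v\|_b$ for every $m\geq 1$.

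The main obstacle is calibrating constants: the proof only works because $C_{11}$ was \emph{defined} to force $cC_{11}^{-1/2}\leq 1/8$ in \eqref{eq-cst11}, and because $n_0$ can be enlarged to absorb $C_{18}$ into $\rho^{n_0/4}$. Without either of these, the bound would hold only for $m$ beyond a threshold, not for every $m\geq 1$ as the statement demands. A secondary but routine point is verifying uniform $L^1$-boundedness of $\tilde\L_\sigma$ via the dual eigenfunction; this is where the smallness of $\eps$ enters through continuity of $f_\sigma$ and $f_\sigma^*$.
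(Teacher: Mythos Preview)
Your overall strategy---feeding the violation of \eqref{eq:H} into the Lasota--Yorke inequality---matches the paper's, and your treatment of the $\Var_Y$ part is essentially correct. The genuine gap is in your $L^1$ estimate. You assert that $\tilde\L_\sigma$ is uniformly $L^1(\Leb)$-bounded because the dual eigenfunction $f_\sigma^*$ is bounded above and below. This fails when $\varphi$ is unbounded and $\sigma\neq 0$: the dual eigenvalue equation reads $(\L_\sigma^*g)(x)=e^{\sigma\varphi(x)}g(Fx)=\lambda_\sigma g(x)$, so $g(x)=\lambda_\sigma^{-1}e^{\sigma\varphi(x)}g(Fx)$; if $\varphi$ is unbounded this forces $g$ to be either unbounded or not bounded away from zero. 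Concretely, $\int\tilde\L_\sigma^n|u|\,d\Leb\leq \frac{\sup f_\sigma}{\inf f_\sigma}\lambda_\sigma^{-n}\int e^{\sigma\varphi_n}|u|\,d\Leb$, and for $u$ concentrated where $\varphi_n$ is large this ratio to $\|u\|_1$ blows up. So there is no uniform $C_{17}$.

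The paper avoids this by using, instead of plain $L^1$-boundedness, the Cauchy--Schwarz bound \eqref{eq:LYineq1} derived inside the proof of Proposition~\ref{prop-LYineq}:
\[
\|\tilde\L_\sigma^{mn_0}|v|\|_1\leq \Lambda_\sigma^{mn_0}\,\frac{\sup f_\sigma}{\inf f_\sigma}\Big(\frac{\sup f_{2\sigma}}{\inf f_{2\sigma}}\Big)^{1/2}(\|v\|_\infty\|v\|_1)^{1/2}.
\]
The growing factor $\Lambda_\sigma^{mn_0}$ is exactly what your argument omits; it is controlled because $\Lambda_\sigma<\rho^{1/2}$ for small $|\sigma|$, so $\gamma_2:=\Lambda_\sigma^{n_0}\rho^{-n_0/2}<1$. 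After substituting the violation of \eqref{eq:H} into $(\|v\|_\infty\|v\|_1)^{1/2}$ you recover $\gamma_2^m|b|^{-1}\Var_Y v$ with a constant that is already $\leq 1$ by the calibration of $C_{11}$ in \eqref{eq-cst11}. This also removes the need for your final move of enlarging $n_0$ to swallow $C_{18}$, which is not legitimate here since $n_0$ is fixed in the hypotheses and enters the definition of $C_{10}$, the cone $\CC_b$, and all the preceding lemmas.
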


\begin{proof}
By continuity in $\sigma$, $1\leq \Lambda_{\sigma} < \rho^{1/2}$ for all $|\sigma|$ 
sufficiently small.
Then clearly also 
$\gamma_2 := \Lambda_{\sigma}^{n_0} \rho^{-n_0/2} < 1$.
We first treat the case $\sigma \geq 0$, so
by assumption, $\Var_Y v > C_{11} |b|^2 \rho^{m n_0} \|v\|_1$.
Using Proposition~\ref{prop-LYineq} (which is allowed since $n_0$ is a multiple of $k$), we compute that
\begin{align*}
 \Var_Y(\tilde \L_s^{m n_0} v)&\leq\rho^{-m n_0}\Var_Y v+c(1+|b|)
\Lambda_{\sigma}^{mn_0}(\|v\|_1\|v\|_\infty)^{1/2} \\
&\leq \rho^{-m n_0}\Var_Y( v)+c(1+|b|)
\Lambda_{\sigma}^{mn_0}(\|v\|_1(\Var_Y v +\|v\|_1)^{1/2}\\
&\leq \rho^{-m n_0}\Var_Y( v)+c(1+|b|)
\Lambda_{\sigma}^{mn_0} \Big(\frac{\rho^{-m n_0}}{C_{11}|b|^2}\Var_Yv\Big(\Var_Yv+\frac{\rho^{-m n_0}}{C_{11}|b|^2}\Var_Y v\Big)\Big)^{1/2}\\
&\leq  \rho^{-m n_0}\Var_Y v+\frac{c}{C_{11}^{1/2}}\frac{\sqrt{65}}{8} \frac{1+|b|}{|b|} 
\Lambda_{\sigma}^{mn_0} \rho^{-m n_0/2}\Var_Yv \\
&\leq (\rho^{-m n_0} + \frac{1}{8K_2} \frac{3\sqrt{65}}{16} \Lambda_{\sigma}^{mn_0} \rho^{-m n_0/2} ) \Var_Y v,
\end{align*}
where we have used $C_{11}|b|^2 > 64$ and abbreviated $K_2 := \frac{\sup f_{\sigma}}{\inf f_{\sigma}}\frac{\sup f_0}{\inf f_0}$.
Therefore
\[
(1+|b|)^{-1}\Var_Y(\tilde \L_s^{m n_0} v)\leq (1+|b|)^{-1} \frac1{4K_2}\gamma_2^m \Var_Y v
\]
for $m$ sufficiently large. 
By \eqref{eq:LYineq1} at the end of the proof of
Proposition~\ref{prop-LYineq}, 
$$
\|\tilde \L_{\sigma}^{m n_0} |v|\ \|_1\leq \Lambda_{\sigma}^{mn_0}
\frac{\sup f_{\sigma}}{\inf f_{\sigma}}
\Big( \frac{\sup f_{2\sigma}}{\inf f_{2\sigma}} \Big)^{1/2} \,(\|v\|_\infty\|v\|_1)^{1/2}.
$$
Note that $\|\tilde \L_s^{m n_0} v\|_1\leq \|\tilde \L_{\sigma}^{m n_0} |v|\|_1$.
so we have 
\begin{align*}
\|\tilde \L_s^{m n_0} v\|_1
&\leq \Lambda_{\sigma}^{m n_0}\, 
\frac{\sup f_{\sigma}}{\inf f_{\sigma}}
\Big( \frac{\sup f_{2\sigma}}{\inf f_{2\sigma}} \Big)^{1/2}
\Big((\Var_Y v +\|v\|_1)\|v\|_1\Big)^{1/2}\\
&\leq \Lambda_{\sigma}^{m n_0}\,  \frac{\sup f_{\sigma}}{\inf f_{\sigma}}
\Big( \frac{\sup f_{2\sigma}}{\inf f_{2\sigma}} \Big)^{1/2}
\Big( (1+\frac{\rho^{-m n_0}}{C_{11}  |b|^2}) \frac{\rho^{-m n_0}}{C_{11} |b|^2} \Big)^{1/2} \Var_Y v \\
&\leq \frac{\sup f_{\sigma}}{\inf f_{\sigma}}
\Big( \frac{\sup f_{2\sigma}}{\inf f_{2\sigma}} \Big)^{1/2} \frac{\sqrt{65}}{8}C_{11}^{-1/2} |b|^{-1}
\Lambda_{\sigma}^{m n_0} \rho^{-m n_0/2} \ \Var_Y v.
\end{align*}
The choice of $C_{11}$ gives that $\frac{\sup f_{\sigma}}{\inf f_{\sigma}}
\Big( \frac{\sup f_{2\sigma}}{\inf f_{2\sigma}} \Big)^{1/2}<C_{11}^{1/2}/8K_2$. 
Hence, the choice of $\gamma_2$ gives
$\|\tilde \L_s^{m n_0} v\|_1\leq \frac 1{4K_2} (1+|b|)^{-1} \gamma_2^m \Var_Y v$.
Together, $\| \tilde \L_s^{m n_0} \|_b \leq \frac{1}{2K_2} (1+|b|)^{-1} \gamma_2^m \Var_Y v$.

Now if $\sigma < 0$, then the
assumption is $\Var_Y (e^{\sigma \varphi_{m n_0}} v) > 
C_{11} |b|^2 \rho^{m n_0} \| e^{\sigma \varphi_{m n_0}} v\|_1$.
The above computation gives 
$$
\| \tilde \L_s^{m n_0} v \|_b \leq
\frac{\sup f_{\sigma}}{\inf f_{\sigma}}\frac{\sup f_0}{\inf f_0}\
\| \tilde \L_{ib}^{m n_0}( e^{\sigma \varphi_{m n_0}} v) \|_b 
\leq  \frac12 (1+|b|)^{-1}\ \gamma_2^m(2\Var_Y v + \| v \|_1),
$$
where we have used (since $\sigma < 0$)
that $\Var_Y(e^{\sigma \varphi_{m n_0}} v) \leq \Var_Y v + \| v \|_\infty
\leq 2\Var_Y v + \| v \|_1$.
Therefore $\| \tilde \L_s^{m n_0} \|_b \leq (1+|b|)^{-1} \gamma_2^m \| v \|_b$ and this
proves the lemma. 
\end{proof}

\begin{proof}[Proof of Theorem~\ref{th-main}]
Let $\eps \in (0,1)$ be such that the conclusion of 
Lemmas~\ref{lemma-complete-1}, \ref{lemma-complete-2}
and Proposition~\ref{prop-LYineq} hold, and take $\gamma = \max\{ \gamma_1^{1/2},
\gamma_2^{1/2}\}$.
Let $|\sigma| < \eps$, $n \in \N$ and $v \in \BV(Y)$ be arbitrary.
Recall that $|b|\geq \max\{ 4\pi/D,2\}$. 
Let $A$ be the constant used in Lemma~\ref{lemma-complete-1};
without loss of generality, we can assume that $A \log |b| > 3n_0$.
By the proof of Proposition~\ref{prop-LYineq} (see also Remark~\ref{rem-LYtwist}),
there is $A'$ such that the operator norm
\begin{equation}\label{eq:Aprime}
\| \tilde \L_s^{n'} \|_b \leq A' (1+|b|) \quad \text{ for all } |\sigma| < \eps,
b \in \R, n' \in \N.
\end{equation}
Take  
\begin{equation}\label{eq:nminimal}
n \geq 2 \max \left\{ \frac{A}{n_0} \log(1+ |b|) \ ,\  \log(\Lambda_{\sigma}^{-1} \frac{\sup f_{\sigma}}{\inf f_{\sigma}}A'(1+|b|))\right\}.
\end{equation}
Because the contraction in Lemmas~\ref{lemma-complete-1} and 
\ref{lemma-complete-2} happen at different time steps, we carry out the following algorithm:
\begin{enumerate}
\item Let $m_0 \in \N$ be maximal such that $3m_0n_0 \leq n$.
If $m_0 < A\log(1+|b|)$, then continue with Step 4, otherwise continue with Step 2.
\item
If $v$ satisfies ($H_{\sigma, m_0}$), then $\| \tilde \L_s^{3m_0n_0} v \|_b
\leq \gamma^{6m_0} \| v \|_b$ by Lemma~\ref{lemma-complete-1},
and we continue with Step 4.\\
If $v$ does not satisfy ($H_{\sigma, m_0}$), then $\| \tilde \L_s^{m_0n_0 } v \|_b
\leq \gamma^{2m_0} \| v \|_b$ by Lemma~\ref{lemma-complete-2}.
Let $v_1 = \tilde \L_s^{m_0n_0 } v$ and let $m_1 \in \N$ be maximal such that $3m_1n_0  \leq n-m_0n_0 $.\\
If $m_1 < A\log |b|$, then continue with Step 4, otherwise continue with Step 3.
\item
If $v_1$ satisfies ($H_{\sigma, m_1}$), then $\| \tilde \L_s^{3m_1n_0 } v_1 \|_b
\leq \gamma^{6m_0} \| v \|_b$ by Lemma~\ref{lemma-complete-1}.
Therefore 
$$
\| \tilde \L_s^{(3m_1+m_0)n_0 } v \|_b =
\| \tilde \L_s^{3m_1n_0 } v_1 \|_b 
\leq \gamma^{6m_1}\| v_1 \|_b =  \gamma^{3m_1}\| \tilde \L_s^{3m_1n_0} v \|_b
\leq \gamma^{6m_1+2m_0} \| v \|_b,
$$
and we continue with Step 4.\\
If $v_1$ does not satisfies ($H_{\sigma, m_1}$), then $\| \tilde \L_s^{m_1n_0 } v_1 \|_b
\leq \gamma^{2m_0} \| v_1 \|_b$ by Lemma~\ref{lemma-complete-2}.
Let $v_2 = \tilde \L_s^{m_1n_0 } v_1$ and let $m_2 \in \N$ be maximal 
 such that $3m_2n_0  \leq n-(m_0+m_1)n_0$ and repeat Step 3.
Each time we pass through Step 3, we introduce the next integer $m_i$
and $v_i = \tilde\L_s^{m_{i-1}}v_{i-1}$.
As soon as $m_i < A \log(1+|b|)$ we continue with Step 4.
\item Let $p = p(v)$ be the number of times that this algorithm passes through
Step 3. Note that $p < \infty$ because each time Step 3 is taken,
$n-(m_0 + m_1 + \dots + m_i)n_0$ decreases by a factor $2/3$.
Thus we find a sequence
$(m_i)_{i=0}^p$ and we can define
$$
M_p = M_p(v) = \begin{cases}
m_0 + \dots + m_{p-1} + 3m_p,
 & \text{ or }\\
m_0 + \dots + m_{p-1} + m_p, 
\end{cases}
$$
depending on whether $v_{p-1} = \tilde \L_s^{(m_0+ \dots + m_{p-1})n_0} v$ satisfies ($H_{\sigma,m_{p-1}}$) or not. 
In either case we have
$n-M_pn_0 < A \log(1+|b|)$ and
$\| \tilde \L_s^{M_pn_0} v \|_b \leq \gamma^{2M_p} \| v \|_b$.
\end{enumerate}
By \eqref{eq:Aprime}, we have for all $v \in \BV(Y)$
$$
\| \tilde \L_s^nv\|_b = \| \tilde \L_s^{n-M_pn_0} (\tilde \L_s^{M_pn_0} v) \|_b 
\leq \| \tilde \L_s^{n-M_pn_0} \|_b \ \| \tilde \L_s^{M_pn_0} v \|_b 
\leq A' (1+|b|) \gamma^{2M_p} \| v \|_b.
$$
Also $\| \L_s^nv\|_b \leq \lambda_{\sigma}^{-1} \frac{\sup f_{\sigma}}{\inf f_{\sigma}}\| \tilde \L_s^n v\|_b$.
Therefore, using $n-M_pn_0 < A\log |b|$,
\begin{eqnarray*}
\| \L_s^nv\|_b 
& \leq & \lambda_{\sigma}^{-1} \frac{\sup f_{\sigma}}{\inf f_{\sigma}} 
 A' (1+|b|) \gamma^{2M_p} \| v \|_b \\
& \leq & \lambda_{\sigma}^{-1} \frac{\sup f_{\sigma}}{\inf f_{\sigma}} 
 A' (1+|b|) \gamma^{(-A \log |b|)/n_0} \gamma^{2n} \| v \|_b \\
& \leq & \lambda_{\sigma}^{-1} \frac{\sup f_{\sigma}}{\inf f_{\sigma}}
 A' (1+|b|) \gamma^{n/2}  \
\gamma^{(-A \log |b|)/n_0} \gamma^{n/2}\ \gamma^n \| v \|_b
\leq \gamma^n \| v \|_b,
\end{eqnarray*}
since $n$ is chosen large enough as in \eqref{eq:nminimal}.
This completes the proof.
\end{proof}

\appendix

\section{Proof of  Proposition~\ref{prop-LYineq}}
\label{sec-LY}

\begin{proof}[Proof of Proposition~\ref{prop-LYineq}]
Fix $k$ and $\eps$ such that the assumptions of the proposition hold. First, we provide the argument for $n=k$; the conclusion for $n$ a multiple of $k$
will follow by a standard iteration argument.
We note that for each $a\in\alpha^k$ the interval $F^k(a)=[p_a, q_a]$ is the domain of
an inverse branch $h \in \H_k$, which is a contracting diffeomorphism.

Compute that
\begin{align}
\label{eq-var1}
\nonumber \Var_Y \tilde \L_s^k v &\leq \frac{1}{\lambda_{\sigma}^{k}}\frac{1}{\inf f_{\sigma}}\Var\Big( \sum_{h\in\H_k} e^{s\varphi_k\circ h} |h'| (f_{\sigma} v)\circ h\Big)
+ \frac{1}{\lambda_{\sigma}^{k}}\Var\Big(\frac{1}{f_{\sigma}}\Big)\Big\|\sum_{h\in\H_k} e^{s\varphi_k\circ h} |h'| (f_{\sigma} v)\circ h\Big\|_\infty\\
\nonumber &\leq \frac{Q}{\lambda_{\sigma}^{k}}\Var\Big( \sum_{h\in\H_k} e^{s\varphi_k\circ h} |h'| (f_{\sigma} v)\circ h\Big)
+ \Var\Big(\frac{1}{f_{\sigma}}\Big)\Big\|\frac{1}{\lambda_{\sigma}^{k}}\sum_{h\in\H_k} e^{s\varphi_k\circ h} |h'| (f_{\sigma} v)\circ h\Big\|_1\\
&\leq \frac{Q}{\lambda_{\sigma}^{k}}\Var\Big( \L_s^k(f_{\sigma} v)\Big)
+ \Var\Big(\frac{1}{f_{\sigma}}\Big)\sup f_{\sigma}\int \tilde \L_{\sigma}^k |v| d\Leb,
\end{align}
where we abbreviated $Q := \frac{1}{\inf f_{\sigma}}+\Var\Big(\frac{1}{f_{\sigma}}\Big)$.

We estimate the first term in the above equation. Since $v\in \BV(Y)$, $v$ is differentiable Lebesgue-a.e.\ on $Y$ and we let $dv$ denote the generalized derivative;
so, for $[p,q] \subset Y$, we have 
$\Var_Y(1_{[p,q]} v) \leq  \int_p^q |dv| + |v(p)| + |v(q)|$ (see, for instance, ~\cite{Giusti}).

\begin{align}\label{eq:varLnv}
\frac{1}{\lambda_{\sigma}^{k}}\Var\Big( \L_s^k(f_{\sigma} v)\Big) &\leq 
\sum_{h\in\H_k} \Big(\int_{\dom(h)}\frac{\Big| d\Big(e^{s\varphi_k\circ h} |h'| (f_{\sigma} v)\circ h\Big)\Big|}{\lambda_{\sigma}^{k}}
\nonumber \\
& \quad + \frac{|e^{s\varphi_k\circ h}|\ |h'| (f_{\sigma} |v|)\circ h}{\lambda_{\sigma}^k}(p_a)
+ \frac{|e^{s\varphi_k\circ h}| \ |h'| (f_{\sigma} |v|)\circ h}{\lambda_{\sigma}^k }(q_a)\Big) \nonumber \\
& \leq 2\sum_{h\in\H_k}\int_{\dom(h)}\Big|d\Big(\frac{e^{s\varphi_k\circ h} |h'| (f_{\sigma} v)\circ h}{\lambda_{\sigma}^k}\Big)\Big| \nonumber \\
& \quad +
2\sum_{h\in\H_k} \inf_{[p_a,q_a]} \Big|\frac{e^{s\varphi_k\circ h(x)} |h'(x)| (f_{\sigma} v)\circ h(x)}{\lambda_{\sigma}^k}\Big| =: J_1 + J_2.
\end{align}
First, by the finite image property,
$c_0 := \min_{a \in \alpha^k} (q_a-p_a) > 0$ for our fixed $k$. Therefore
\begin{align*}
J_2 &\leq \frac{2}{\min_{a\in\alpha^k}(q_a-p_a)}\sum_{h\in\H_k}\int_{F^k(a)}\frac{e^{\sigma\varphi_k\circ h(x)} |h'(x)| (f_{\sigma} |v|)\circ h(x)}{\lambda_{\sigma}^k} 
\leq \frac{2\sup f_{\sigma}}{c_0} \int_Y \tilde \L_{\sigma}^k|v|\, d\Leb.
\end{align*}
We split the term $J_1$ in \eqref{eq:varLnv} into three terms
\begin{align*} 
\sum_{h\in\H_k}\int_{\dom(h)}\Big|d\Big(\frac{e^{s\varphi_k\circ h} |h'| (f_{\sigma} v)\circ h}{\lambda_{\sigma}^k}\Big)\Big|  \leq I_1+I_2 +I_3
\end{align*}
corresponding to which factor of $\frac{e^{s\varphi_k\circ h} |h'| (f_{\sigma} v)\circ h}{\lambda_{\sigma}^k}$ 
the derivative is taken of.\\
{\bf For $I_1$:} Taking $m=k$ in~\eqref{eq:phin}
\begin{align*}
I_1 := & |\sigma+ib| \sum_{h\in\H_k}\int_{\dom(h)}\frac{e^{\sigma \varphi_k\circ h}(\varphi_k\circ h)'|h'|(f_{\sigma} |v|)\circ h}{\lambda_{\sigma}^k} \, d\Leb \\
\leq  & C'_2 |\eps+b| \sup f_{\sigma}\int_Y \tilde \L_{\sigma}^k|v|\, d\Leb.
\end{align*}
{\bf For $I_2$:} 
Taking $n=k$ in~\eqref{eq:hpp},
\begin{align*}
I_2 = \sum_{h\in\H_k}\int_{\dom(h)} \frac{e^{\sigma\varphi_k\circ h} |h''| (f_{\sigma} |v|)\circ h}{\lambda_{\sigma}^k} \, d\Leb 
\leq C_1\sup f_{\sigma}\int_Y \tilde \L_{\sigma}^k|v|\, d\Leb.
\end{align*}
{\bf For $I_3$:} Due to \eqref{eq:rho2} and using a change of coordinates,
\begin{align*}
I_3 &=  \sum_{h\in\H_k}\int_{\dom(h)}\Big|\frac{e^{\sigma\varphi_k\circ h}|h'|^2d(f_{\sigma} v)\circ h}{\lambda_{\sigma}^k}\Big| \, d\Leb 
\leq \rho^{-3k} \sum_{h\in\H_k}\int_{a} |d(f_{\sigma} v)| \, d\Leb \\ 
& \leq \rho^{-3k}\int_Y |d(f_{\sigma} v)| \, d\Leb
= \rho^{-3k} \Var_Y (f_{\sigma} v)
\leq \rho^{-3k} \sup f_{\sigma} \Var_Y v
+ \rho^{-3k} \Var_Y f_{\sigma} \| v \|_\infty\\
&\leq \rho^{-3k}( \sup f_{\sigma} +\Var_Y f_{\sigma})\Var_Y v
+ \rho^{-3k} \Var_Y f_{\sigma} \int_Y |v| d\Leb,
\end{align*}
where in the last inequality we have used $\|v\|_\infty\leq \Var_Y v +\int |v| d\Leb$.
Putting these together, 
\begin{align*}
\frac{1}{\lambda_{\sigma}^{k}}\Var\Big( \L_s^k(f_{\sigma} v)\Big) & \leq \rho^{-3k}( \sup f_{\sigma} +\Var_Y f_{\sigma})\Var_Y v \\
& \quad  + \rho^{-3k} \Var_Y f_{\sigma} \int_Y |v| d\Leb+ (c_1+C'_2 |b|) \sup f_{\sigma}  \int_Y \tilde \L_{\sigma}^k |v| d\Leb,
\end{align*}
where $c_1= 2c_0^{-1}+C_1+C'_2 \eps$ and $C'_2$ is as in~\eqref{eq:phin}. 
This together with~\eqref{eq-var1}  implies that
\begin{align*}
 \Var_Y \tilde \L_s^k v &\leq \rho^{-3k} Q ( \sup f_{\sigma} +\Var_Y f_{\sigma}) \Var_Yv + \rho^{-3k} \Var_Y f_{\sigma} \int_Y |v| \, d\Leb \\
& \quad + (c_1+\Var\Big(\frac{1}{f_{\sigma}}\Big) + C'_2|b|) \sup f_{\sigma} 
\int_Y \tilde \L_{\sigma}^k |v| d\Leb.
\end{align*}
Given our choice of $\eps$, $c_2:=\Var\Big(\frac{1}{f_{\sigma}}\Big) <\infty$.
By~\eqref{eq:k3}, $c:=\rho^{-2k}Q( \sup f_{\sigma} +\Var_Y f_{\sigma})<1$
and $\rho^{-3k} \Var_Y f_{\sigma}<1$.
Therefore
\begin{align}\label{eq-lsk}
 \Var_Y \tilde \L_s^k v \leq \rho^{-k}\Var_Y v
 + \int_Y |v| d\Leb+ (c_1+c_2+C'_2 |b|)\sup f_{\sigma}  \int_Y \tilde \L_{\sigma}^k |v| d\Leb.
\end{align}
For $n \geq 1$ arbitrary, 
we estimate $\int_Y \tilde \L_{\sigma}^{nk} |v| d\Leb$ applying Cauchy-Schwartz. First, note that
\[
\int_Y \tilde \L_{\sigma}^{nk} |v| d\Leb \leq \Big(\int_Y (\tilde \L_{\sigma}^{nk} |v|)^2 d\Leb\Big)^{1/2}.
\]
Recall that $\Lambda_{\sigma} = \frac{\lambda_{2\sigma}^{1/2}}{\lambda_{\sigma}}$.
Then
\begin{align*}
\int & (\tilde \L_{\sigma}^{nk} |v|)^2 d\Leb =\int (\lambda_{\sigma}^{nk} f_{\sigma})^{-2}\Big(\sum_{h\in\H_{nk}} e^{\sigma\varphi_{nk}\circ h} |h'| (f_{\sigma} |v|)\circ h\Big)^2\, d\Leb\\
&=\int (\lambda_{\sigma}^{nk} f_{\sigma})^{-2}\Big(\sum_{h\in\H_{nk}} (e^{\sigma\varphi_{nk}\circ h} |h'|^{1/2} (f_{\sigma} |v|)^{1/2}\circ h)(|h'|^{1/2} (f_{\sigma} |v|)^{1/2}\circ h)\Big)^2\, d\Leb\\
&\leq \lambda_{\sigma}^{-2nk}(\inf f_{\sigma}^2)^{-1}\int \Big(\sum_{h\in\H_{nk}} e^{2\sigma\varphi_{nk} \circ h} |h'|(f_{\sigma} |v|)\circ h\Big)
\Big(\sum_{h\in\H_{nk}} |h'|(f_{\sigma} |v|)\circ h\Big)\, d\Leb\\
&\leq \Lambda_{\sigma}^{2nk}
\Big( \frac{\sup f_{\sigma}}{\inf f_{\sigma}} \Big)^2 
\frac{\sup f_{2\sigma}}{\inf f_{2\sigma}} \| v \|_\infty 
\int \Big( \sum_{h \in \H_{nk}} \frac{e^{2\sigma \varphi_{nk} \circ h}}{\Lambda_{2\sigma}^{nk}f_{2\sigma}}
|h'| f_{2\sigma} \circ h \Big) \ \Big( \sum_{h \in \H_{nk}} |h'| |v| \circ h \Big) d\Leb 
\\ 
&\leq \Lambda_{\sigma}^{2nk} \Big( \frac{\sup f_{\sigma}}{\inf f_{\sigma}} \Big)^2 
\frac{\sup f_{2\sigma}}{\inf f_{2\sigma}} \| v \|_\infty \| v \|_1.
\end{align*}
Thus,
\begin{equation}\label{eq:LYineq1}
\int_Y \tilde \L_{\sigma}^{nk} |v| d\Leb \leq 
\Lambda_{\sigma}^{nk} \frac{\sup f_{\sigma}}{\inf f_{\sigma}}
\Big( \frac{\sup f_{2\sigma}}{\inf f_{2\sigma}} \Big)^{1/2}
\,(\|v\|_\infty\|v\|_1)^{1/2}.
\end{equation}
The above together with~\eqref{eq-lsk} implies that
\begin{align}\label{eq-var4}
 \nonumber\Var_Y \tilde \L_s^{nk} v &\leq \rho^{-k}\Var_Y  \tilde \L_s^{(n-1)k} v
 + (1+c_1+c_2+C'_2 |b|) \Lambda_{\sigma}^{nk}
 \frac{\sup f_{\sigma}}{\inf f_{\sigma}}
\Big( \frac{\sup f_{2\sigma}}{\inf f_{2\sigma}} \Big)^{1/2}
\,(\|v\|_\infty\|v\|_1)^{1/2} \\
&\leq \rho^{-k}\Var_Y \tilde \L_{\sigma}^{(n-1)k} v+ c_3(1+|b|) \Lambda_{\sigma}^{nk} (\|v\|_\infty\|v\|_1)^{1/2},
\end{align}
for $c_3:=\max\{1+c_1+c_2,\,C'_2\} \frac{\sup f_{\sigma}}{\inf f_{\sigma}}
\Big( \frac{\sup f_{2\sigma}}{\inf f_{2\sigma}} \Big)^{1/2}$.
Iterating~\eqref{eq-var4}, we obtain that
\begin{align*}
 \Var_Y \tilde \L_s^{nk} v &\leq \rho^{-nk}\Var_Y v
 +c(1+|b|) \Lambda_{\sigma}^{nk} (\|v\|_\infty\|v\|_1)^{1/2},
\end{align*}
for any $n\geq 1$,
where $c:=c_3 \sup f_{\sigma} \sum_{j=0}^{n-1} (\rho \Lambda_{\sigma})^{-jk}$. 
This ends the proof.~\end{proof}

\begin{remark}\label{rem-LYtwist} A similar, but much more simplified, 
argument to the one used in the proof of Proposition~\ref{prop-LYineq} shows that the non-normalized twisted transfer operator satisfies 
$\Var_Y (\L_s^n v) \leq c_1\rho_0^{-n} \Var_Y v + c_2(1+|b|) \|v\|_\infty$, for all $n\geq 1$,
some $\rho_0>1$, $c_1, c_2>0$, for all $b\in\R$ and all $|\sigma|<\eps$, for any $\eps\in (0,1)$.
\end{remark}

\begin{remark}\label{rem:imaginary_axis}
If $\sigma = 0$, so when working on the imaginary axis, we can
get the standard Lasota-Yorke inequality
$\Var_Y \L_{ib}^n v \leq \rho^{-n} \Var_Y v + c_4 (1+|b|) \| v \|_1$.
\end{remark}

\begin{proof}[Proof of Proposition~\ref{prop-continuity}]
Take $\eps = \eps_0^2$. Without loss of generality, set $0 \leq |\sigma_2| \leq |\sigma_1|<\eps$ and take $b\in \R$,
\begin{align*}
\| \L_{s(\sigma_1+ib_1} v - \L_{(\sigma_2+ib_2} v\|_1 & =
\int_Y \left|\sum_{h \in \H} \Big(e^{ (\sigma_1+ib_1) \varphi \circ h} - e^{(\sigma_2+ib_2)\varphi \circ h} \Big) |h'| v \circ h \right| \, d\Leb \\
&\leq \| v \|_\infty  \int_Y \sum_{h \in \H} e^{\sigma_1 \varphi \circ h} |h'| \Big(1 - e^{(\sigma_2-\sigma_2) \varphi \circ h} \Big) \, d\Leb.
\end{align*}
Because the function $x \mapsto e^{-(\eps_0-\sigma_1) x} x$ 
assumes its maximum value $e^{-1}(\eps_0-\sigma)^{-1}$
at $x = (\eps_0-\sigma)^{-1}$, we have
$$
e^{\sigma_1 \varphi \circ h} \Big(1 - e^{(\sigma_2-\sigma_2) \varphi \circ h}\Big) 
\leq e^{\eps_0 \varphi \circ h} |\sigma_1 - \sigma_2|
e^{-( \eps_0-\sigma_1)\varphi \circ h} \varphi \circ h\leq 
\frac{ e^{\eps_0 \varphi \circ h}}{e(\eps_0-\sigma)}.
$$
Plugging this into the above, we find
$$
\int_Y \sum_{h \in \H} \Big(e^{ \sigma_1 \varphi \circ h} - e^{\sigma_2 \varphi \circ h} \Big) |h'| v \circ h \, d\Leb
\leq\frac{ \| v \|_\infty}{e(\eps_0-\sigma)}
\int_Y \sum_{h \in \H} e^{\eps_0 \varphi \circ h} |h'| \, d\Leb
\leq \frac{C_3  \| v \|_\infty}{e(\eps_0-\sigma)}.
$$
To estimate $\Var_Y( \L_{s(\sigma_1+ib_1} v - \L_{(\sigma_2+ib_2} v)$,
we work as in the Proof of Proposition~\ref{prop-LYineq},
and use the above estimate on the $L^1$-norm.
As such we obtain
$$
\Var_Y( \L_{s(\sigma_1+ib_1} v - \L_{(\sigma_2+ib_2} v)
\leq |\sigma_1-\sigma_2| \eps_0^{-1} (C' \Var_Y v + C'' \| v \|_\infty)
\leq C|\sigma_1-\sigma_2| \eps_0^{-1}\| v \|_{\BV}
$$
for some $C > 0$ as required.
\end{proof}

\section{Proofs of Lemmas~\ref{lem:jump_fsigma} and \ref{lem:G}}
\label{sec:Hofbauer}

\begin{proof}[Proof of Lemma~\ref{lem:jump_fsigma}]
Recall that $f_{\sigma}$ is an eigenfunction for the non-normalized twisted
transfer operator 
$\L_{\sigma}$, so $\frac{1}{\lambda_{\sigma}} \L_{\sigma}^r f_{\sigma}(x) = f_{\sigma}(x)$
for every $r \in \N$ and $x \in Y$.
Therefore, for $r \in \N$ arbitrary, we have
\begin{eqnarray}\label{eq:ev}
\frac{1}{\lambda_{\sigma}^r} \L_{\sigma}^r 1(x) & = &
\frac{1}{\lambda_{\sigma}^r}
\sum_{h \in \H_r, x \in \dom(h)} |h'(x)|\ e^{\sigma \varphi_r \circ h(x)} 
\nonumber \\
&\leq& \sum_{h \in \H_r,  x \in \dom(h)} \frac{|h'(x)|\ e^{\sigma \varphi_r \circ h(x)} f_{\sigma} \circ h(x)}{\lambda_{\sigma}^r f_{\sigma}(x)}  \frac{\sup f_{\sigma}}{\inf f_{\sigma}}
 \leq \frac{\sup f_{\sigma}}{\inf f_{\sigma}}
\end{eqnarray}
for all $x \in Y$, and similarly $\frac{1}{\lambda_{\sigma}^r} \L_{\sigma}^r 1(x) 
\geq \frac{\inf f_{\sigma}}{\sup f_{\sigma}}$.
Hence the Cesaro means converge to the fixed point
with unit $L^1$-norm:
$$
\lim_{n \to \infty} \frac1n \sum_{r=0}^{n-1} \L_{\sigma}^r 1 = \frac{f_{\sigma}}{\int_Y f_{\sigma} \ d\Leb}.
$$
If $x \notin X_\infty$, then $\L_{\sigma}^r 1$ is continuous at $x$ for 
all $r \in \N$, and so is $f_{\sigma}$.
Now for $x \in X'_j$ take $r \geq j$.
The discontinuity of $\L_{\sigma}^r1$ at $x \in X'_j$ is created
by non-onto branches of $F^r$, and there exist $y \in X'_1$
and an inverse branch $\tilde h \in \H_{j-1}$ such that
 $y = \tilde h(x)$. The jump-size of $\tilde \L_{\sigma}^r 1$ at $x$
can be expressed as a sum of $h \in \H_{r-(j-1)}$ which in the summand
is composed with $\tilde h$.
Then, using \eqref{eq:rho2} and also \eqref{eq:ev} for iterate $r-(j-1)$
to estimate the sum in brackets below:
\begin{eqnarray*} 
\Size \frac{1}{\lambda_{\sigma}^r} \L_{\sigma}^r 1(x) 
&\leq& \frac{1}{\lambda_{\sigma}^r}
\sum_{h \in \H_{r-(j-1)}, y \in \dom(h)} \!\!\!\!\!\! |(h \circ \tilde h)'(x)|\
e^{\sigma \varphi_{r-(j-1)} \circ h \circ \tilde h(x) + \sigma \varphi_{j-1} \circ \tilde h(x)} \\
&=& \Big( \sum_{h \in \H_{r-(j-1)}, y \in \dom(h)} \!\!\!\!\!\! 
\frac{|h'(y)|\ e^{\sigma \varphi_{r-(j-1)} \circ h(y)}}{\lambda_{\sigma}^{r-(j-1)}} \Big) \ 
\frac{|\tilde h'(x)| \ e^{\sigma \varphi_{j-1} \circ \tilde h(x)}}{\lambda_{\sigma}^{j-1}}  \\
&\leq & \frac{\sup f_{\sigma}}{\inf f_{\sigma}} \rho^{-3(j-1)}.
\end{eqnarray*}
By taking the Cesaro limit we obtain
statement 1.\ of the lemma for $C_7 = \rho^3 \frac{\sup f_{\sigma}}{\inf f_{\sigma}}$.

Now for statement 2.\  let $I \subset Y$ be an arbitrary interval, and let $J$ denote a component of $I \setminus X_r$. Note that if $h \in \H_r$ is such
that $J \cap \dom(h) \neq \emptyset$, then $\dom(h) \supset J$.
The oscillation $\Osc_I(\frac{1}{\lambda_{\sigma}^r} \L_{\sigma}^r1)$ is bounded 
by the sum of jump-sizes of discontinuities 
in $I$ added to the sum of the oscillations  
$\Osc_J(\frac{1}{\lambda_{\sigma}^r} \L_{\sigma}^r1)$ on the components $J$ 
of $I \setminus X_r$.
For the latter, we have using formulas \eqref{eq:phin}, \eqref{eq:hpp}
and \eqref{eq:ev}:
\begin{eqnarray*}
\Osc_J (\frac{1}{\lambda_{\sigma}^r} \L_{\sigma}^r 1)
&\leq & \frac{1}{\lambda_{\sigma}^r}\sum_{h \in \H_r} \int_{J \cap \dom(h)}
|(e^{\sigma \varphi_r \circ h(\xi)} |h'(\xi)|)'| \ d\xi\\
&\leq & \frac{1}{\lambda_{\sigma}^r}\sum_{h \in \H_r} \int_{J \cap \dom(h)}
\Big(|\sigma|\ |(\varphi_r \circ h)'(\xi)| e^{\sigma \varphi_r \circ h(\xi)} 
+  e^{\sigma \varphi_r \circ h(\xi)} |h''(\xi)|\Big)\ d\xi \\
&\leq &  \int_J \sum_{h \in \H_r, J \cap \dom(h) \neq \emptyset} 
\frac{ e^{\sigma \varphi_r \circ h(\xi)} |h'(\xi)|}{\lambda^r_\sigma} (|\sigma| C'_2 + C_1)\ d\xi \\
&\leq &  (\eps C'_2 + C_1) 
\int_J \frac{\sup f_{\sigma}}{\inf f_{\sigma}} \ d\xi 
= (\eps C'_2 + C_1) \frac{\sup f_{\sigma}}{\inf f_{\sigma}} \  \Leb(J).
\end{eqnarray*}
Recall from Remark~\ref{rem:lambda}
that $\frac{\sup f_{\sigma}}{\inf f_{\sigma}} \leq C_5$. 
Summing over all components $J$ of $I\setminus X_r$  gives 
$$
\Osc_I(\frac{1}{\lambda_{\sigma}^r} \L_{\sigma}^r1)
\leq 
(\eps C'_2 + C_1) C_5\  \Leb(I) +  \rho^3 C_5 \sum_{j \leq r} \sum_{x \in X'_j \cap I} \rho^{-3j}.
$$
For the Cesaro limit, we get
$\Osc_I(f_{\sigma}) \leq C_6 \mu(I) + C_7 E_I(f_{\sigma})$
for $C_6 = (\eps C'_2 + C_1) C_5$ and $C_7 = \rho^3C_5$ as required.
This implies also the formula for $\Osc(1/f_{\sigma})$,
adjusting the constants $C_6$ and $C_7$ if necessary.
\end{proof}

Before stating the next lemma, we recall that
$K = \min\{ \Leb(F(a)) : a \in \alpha\}$
and that  $\delta_0 = \frac{K(\rho_0-2)}{5e^{C_1}\rho_0}$.
Since $F$ is topologically mixing, there is $k_1 \in \N$ such 
that $F^{k_1}(I) \supset Y$ for all intervals $I$ of length $\Leb(I) \geq \delta_0$.

\begin{lemma}\label{lem:eta1}
There is $\eta_1 \in (0,1)$ such that for every $z \in Y$
and $\tau > 0$ the following property holds:
For every 
$n \geq k_1 + \frac{\log(2K(\rho_0-2)/(e^{C_1} \rho_0 \tau))}{\log(\rho_0/2)}$
and every interval $J$ of length $\Leb(J) > \tau$,
$$
\Leb(\bigcup_{\tilde a \in J_z} \tilde a) \geq \eta_1 \Leb(J)
\quad \text{ for }\
J_z = \{ \tilde a \in \alpha^n : \tilde a \subset J \text{ and } z \in F^n(\tilde a) \}.
$$
\end{lemma}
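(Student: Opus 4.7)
The plan is to write $n = m + k_1$ with $m := n - k_1$ and to use the two blocks separately: uniform expansion together with Adler's distortion over the first $m$ iterates, and topological mixing (via the definition of $k_1$) over the last $k_1$ iterates.

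First I would establish that the good set
\[
G_m(J) \ :=\ \bigcup\{a \in \alpha^m : a \subset J,\ \Leb(F^m(a)) \geq \delta_0\}
\]
has measure at least $\tfrac12 \Leb(J)$. The atoms in $J\setminus G_m(J)$ are of two types: at most two ``boundary'' atoms of $\alpha^m$ straddling $\partial J$, or atoms fully inside $J$ whose $F^m$-image has length $<\delta_0$ (equivalently, since $|(F^m)'|\geq\rho_0^m$, with $\Leb(a)<\delta_0\rho_0^{-m}$). A Rychlik/Lasota--Yorke-style iterated-splitting argument --- at each iterate an interval gets cut by $\alpha$ and loses at most two boundary pieces while the image is inflated by $\rho_0$ --- bounds the bad set by a geometric factor of order $(2/\rho_0)^m$, with multiplicative constants involving $e^{C_1}$ and $K$. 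The hypothesis on $n$ rearranges to $(2/\rho_0)^{n-k_1}\leq\tau/(10\delta_0)$, and with $\delta_0=K(\rho_0-2)/(5e^{C_1}\rho_0)$ this exactly closes the estimate.

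Next, for each $a \in G_m(J)$, the definition of $k_1$ gives $F^{k_1}(F^m(a))\supset Y\ni z$, so some $\xi\in a$ satisfies $F^n(\xi)=z$. The atom $\tilde a \in \alpha^n$ containing $\xi$ then lies in $a \subset J$ provided the $\alpha^{k_1}$-atom $b \ni F^m(\xi)$ does not straddle $\partial F^m(a)$; this rules out at most two atoms $b$ per $a$. By Adler's distortion applied to $F^m|_a$,
\[
\frac{\Leb(\tilde a)}{\Leb(a)} \geq e^{-C_1 \Leb(F^m(a))}\,\frac{\Leb(b \cap F^m(a))}{\Leb(F^m(a))} \geq c_{k_1} > 0,
\]
where $c_{k_1}$ depends only on $\alpha^{k_1}$ (essentially the minimum length of a non-boundary atom of $\alpha^{k_1}$ divided by $\Leb(Y)$, together with $e^{-C_1\Leb(Y)}$). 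Summing over good atoms gives $\Leb\bigl(\bigcup_{\tilde a \in J_z}\tilde a\bigr) \geq \tfrac12 c_{k_1} \Leb(J)$, so $\eta_1 := c_{k_1}/2$ works.

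The hard part is the first claim --- in particular, controlling the measure of atoms with short $F^m$-image. The point is that the stated logarithmic threshold $(\rho_0/2)^{n-k_1}\geq 10\delta_0/\tau$ in the hypothesis tightly reflects a splitting-by-two combined with $\rho_0$-expansion and the Adler distortion factor $e^{C_1}$; the constant $\delta_0 = K(\rho_0-2)/(5e^{C_1}\rho_0)$ is calibrated so that the iterated argument exactly closes, and the factor $\rho_0-2>0$ (ensured by the assumption $\rho_0 > 2^{4/3}$) is essential for the Lasota--Yorke-type contraction.
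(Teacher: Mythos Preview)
Your proposal is correct and follows essentially the same route as the paper: decompose $n=m+k_1$, use an iterated splitting estimate with contraction factor $2/\rho_0$ over the first $m$ iterates to show that a large proportion of $J$ lies in $m$-cylinders with long $F^m$-image, then invoke mixing at time $k_1$ plus bounded distortion to select an $n$-cylinder $\tilde a\subset a$ whose $F^n$-image contains $z$.

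Two minor differences are worth noting. First, the paper carries out the recursion not on the measure of atoms with short image but on the set $Z^j_\delta=\{y\in J:r_j(y)\le\delta\}$, where $r_j(y)=d(F^j(y),\partial F^j(a))$; this yields the clean recursion $\Leb(Z^{j+1}_\delta)\le 2\Leb(Z^j_{\delta/\rho_0})+\tfrac{2e^{C_1}\delta}{K}\Leb(J)$ and, after solving, the threshold in the hypothesis. Second, at the $k_1$-step the paper fixes in advance a finite family $\Omega$ of $k_1$-cylinders with the property that every interval of length $\ge\delta_0$ contains some $\omega\in\Omega$ with $z\in F^{k_1}(\omega)$; this sidesteps the issue you flag about the $\alpha^{k_1}$-atom $b\ni F^m(\xi)$ possibly straddling $\partial F^m(a)$, and it guarantees the uniform lower bound $\gamma_0=\min_{\omega\in\Omega}\Leb(\omega)/(2\delta_0)>0$ where your $c_{k_1}$ would need the infimum over (possibly infinitely many) $k_1$-cylinders. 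Both points are cosmetic and your sketch would close the same way with these adjustments.
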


\begin{proof}
By the choice of $k_1$, there is a finite
collection $\Omega$ of $k_1$-cylinders such that
for each $z \in Y$ and each $I$ with $\Leb(I) \geq \delta_0$, there is 
$\omega \in \Omega$, $\omega \subset I$, such that $z \in F^{k_1}(\omega)$.
Let $\gamma_0 := \min\{ \frac{\Leb(\omega)}{2\delta_0} : \omega \in \Omega\} > 0$.

For $y \in Y$, define $r_j(y) = d(F^j(y), \partial F^j(a))$, where $a \in \alpha^j$ is the $j$-cylinder containing $y$.
Take $J$ an arbitrary interval of length $\Leb(J) \geq \tau$, and define
$Z^j_\delta = \{ y \in J : r_j(y) \leq \delta\}$. We derive $\Leb(Z^{j+1}_\delta)$ from
$\Leb(Z^j_\delta)$ as follows.
If $a \in \alpha^j$, $W = F^j(a)$ and $a' \in \alpha$ are such that
$\partial W \cap a' \neq \emptyset$, then the points
$\{ z \in F(W \cap a') : d(z, \partial F(W \cap a')) \leq \delta\}$
pull back to at most two intervals in $W \cap a'$ of combined length
$\leq 2\delta/\rho_0$, and this contributes $2\Leb(Z^j_{\delta/\rho_0})$
to $\Leb(Z^{j+1}_\delta)$.
For the cylinders $a' \in \alpha$ that are contained in $W$,
we recall that $\Leb(F(a')) \geq K$. By the distortion bound from
\eqref{eq:adler} we find
$\Leb(Z^{j+1}_\delta \cap F^{-j}(a')) \leq \frac{2e^{C_1}\delta}{K} \Leb(a)$.
Combining this (and summing over all such $a$), we get the recursive relation
$\Leb(Z^{j+1}_\delta) \leq 2\Leb(Z^j_{\delta/\rho_0}) +  \frac{2e^{C_1}\delta}{K} \Leb(J)$.
This gives
$$
\Leb(Z^j_\delta) \leq 2^j \Leb(Z^0_{\delta/\rho_0^j}) + \frac{2e^{C_1}\delta}{K} 
\sum_{i=0}^{j-1} \Big(\frac{2}{\rho_0}\Big)^i
\leq \Big( \Big(\frac{2}{\rho_0}\Big)^j \frac{\delta}{\Leb(J)} + \frac{2e^{C_1} \rho_0}{K(\rho_0-2)} \delta \Big) \Leb(J).
$$
Take $\delta = \delta_0$ and $j \geq \frac{\log(10 \delta_0/\Leb(J))}{\log(\rho_0/2)} = \frac{\log(2K(\rho_0-2)/(e^{C_1} \rho_0 \tau))}{\log(\rho_0/2)}$ (so that $(\frac{2}{\rho_0})^j \frac{\delta_0}{\Leb(J)} \leq \frac{1}{10}$). Then
\begin{equation}\label{eq:growth}
\Leb\left(y \in J : r_j(y) \geq \delta_0 \right) =\Leb(J)-\Leb(Z^j_{\delta_0})
\geq  \Leb(J)-\frac12 \Leb(J)=\frac12 \Leb(J).
\end{equation}
Let $B_{j,J}$ be the collection of $a \in \alpha^j$, $a \subset J$ such 
that there is $y \in a$ with $r_j(y) \geq \delta_0$.
This means by \eqref{eq:growth} that $\Leb(\cup_{a \in B_{j,J}} a) \geq \frac12 \Leb(J)$
and $1 \geq \Leb(F^j(a)) \geq 2\delta_0$ for each $a \in B_{j,J}$.
Take $z \in Y$ and $n = j+k_1$.
It follows that there is an $n$-cylinder $\tilde a \subset a$ such that
$F^j(\tilde a) = \omega \in \Omega$ and $z \in F^{k_1}(\omega)$.
By boundedness of distortion
$$
\frac{\Leb(\tilde a)}{\Leb(a)} \geq e^{-C_1} \frac{\Leb(F^j(\tilde a)}{\Leb(F^j(a))}
\geq e^{-C_1} \frac{\Leb(\omega)}{2\delta_0} \geq \gamma_0 e^{-C_1}.
$$
Hence $\Leb(\cup_{a \in J_z} a) \geq \gamma_0 e^{-C_1} \Leb(\cup_{a \in B_{n,J}} a) \geq
\frac{\gamma_0}{2 e^{C_1}} \Leb(J)$,
proving the lemma for $\eta_1 := \frac{\gamma_0}{2e^{C_1}}$.
\end{proof}

Now we are ready for the proof of Lemma~\ref{lem:G}, which uses assumption \eqref{eq:k}.

\begin{proof}[Proof of Lemma~\ref{lem:G}]
We will apply Lemma~\ref{lem:eta1} for $J = p$, an arbitrary element 
of $\P_k$.
Set for $C_9 = \eta_1 e^{-C_1}/2$.
Assumption \eqref{eq:k} gives $\Leb(p) \geq 12\rho^{-k}$.
Since $n = 2k$, we have $j := n-k_1 \geq k$.
Therefore
$(\frac{2}{\rho_0})^j \frac{\delta_0}{\Leb(p)} \leq 
\frac{2^k \rho^{-3k} \delta_0}{12} < \frac{1}{12}$, and hence
\eqref{eq:growth} implies that 
$\Leb(y \in p: r_j(y) \geq \delta_0) \geq \frac12 \Leb(p)$.

Recall that $B_{j,p} \supset \{ a \in \alpha^j : a \subset p, r_j(y) \geq \delta_0
\text{ for some } y \in a\}$, so $F^j(a) \geq 2\delta_0$
for each $a \in B_{j,p}$. In particular, such $a$ contains an 
$\tilde a \in \alpha^n$ such that $z\in F^n(\tilde a)$,
and $\Leb(\cup_{a \in B_{j,p}} \tilde a) \geq \eta_1 \Leb(p)$
with $\eta_1$ as in Lemma~\ref{lem:eta1}.
Let $B^*_{j,p}$ be a finite subcollection of $B_{j,p}$ such that
$\Leb(\cup_{a \in B^*_{j,p}} \tilde a) \geq \frac23 \eta_1 \Leb(p)$,
and let $h_{\tilde a} : F^n(\tilde a) \to \tilde a$ denote the
corresponding inverse branches.

Using the continuity of $\sigma \mapsto \lambda_{\sigma}$ and 
$\sigma \mapsto e^{\sigma \varphi_n \circ h_{\tilde a}(z)}$ for all
$a \in B^*_{j,p}$, $j \leq 4k-k_1$ and $p \in \P_k$,
we can choose $\eps$ so small that
$\frac{1}{\lambda_{\sigma}^n} |h'_{\tilde a}(z)| e^{\sigma \varphi_n \circ h_{\tilde a}(z)} \geq \frac34 |h'_{\tilde a}(z)|$
for all $a \in B^*_{j,p}$ and all $|\sigma| < \eps$.
Therefore
\begin{eqnarray*}
\frac{1}{\lambda_{\sigma}^n} \sum_{\stackrel {h \in \H_n, z \in \dom(h)}{\range(h) \subset p}}
|h'(z)| e^{\sigma \varphi_n \circ h(z)} 
&\geq & \frac{1}{\lambda_{\sigma}^n} \sum_{a \in B^*_{j,p}}
|h'_{\tilde a}(z)| e^{\sigma \varphi_n \circ h_{\tilde a}(z)} \geq \frac34 \sum_{a \in B^*_{j,p}}
|h'_{\tilde a}(z)| \\
&\geq & \frac34 \sum_{a \in B^*_{j,p}} e^{-C_1} \frac{\Leb(\tilde a)}{\Leb(F^n(\tilde a))}
\geq \frac{\eta_1  \Leb(p)}{2e^{C_1}}.
\end{eqnarray*}
This finishes the proof. 
\end{proof}

\section{A technical result for the proof of Proposition~\ref{prop:to-cone}}
\label{sec-Varvar}

In this subsection we will use the
generalised \BV\ seminorm $\var_Y v$ introduced by 
Keller \cite{Keller} because it compares 
more easily with $\| \ \|_1$ than $\Var_Y$ does.
To be precise, we define
$$
\var_Y v = \sup_{0 < \kappa < 1} \frac{1}{\kappa}
\int_Y \Osc(v, B_\kappa(x)) \, d\Leb,
$$
where $\Osc(v, B_\kappa(x)) = \sup_{y, y' \in B_\kappa(x)} | v(y) - v(y')|$
(also for complex-valued functions).

\begin{lemma}
In dimension one, $\Var_Y$ and $\var_Y$ are equivalent seminorms.
More precisely, for all $v \in \BV(Y)$ we have
\begin{equation}\label{eq:equiv}
\frac12 \Var_Y v \leq \var_Y v \leq 3 \Var_Y v.
\end{equation}
\end{lemma}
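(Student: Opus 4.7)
The plan is to establish the two inequalities separately; both reduce to Fubini-type arguments once the dimension-one structure is exploited.

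For the upper bound $\var_Y v \leq 3\Var_Y v$ (in fact, $\leq 2\Var_Y v$ will already suffice), I would fix $\kappa \in (0,1)$ and bound pointwise $\Osc(v, B_\kappa(x)) \leq |dv|(B_\kappa(x) \cap Y)$, where $|dv|$ is the total variation measure of the distributional derivative of a fixed representative. Integrating in $x$ and swapping the order of integration via Fubini,
$$\int_Y \Osc(v, B_\kappa(x)) \, d\Leb(x) \leq \int_Y \Leb\bigl(\{ x \in Y : y \in B_\kappa(x)\}\bigr) \, d|dv|(y) \leq 2\kappa\, |dv|(Y) = 2\kappa \Var_Y v,$$
from which dividing by $\kappa$ and taking the sup over $\kappa$ yields the claimed bound.

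For the lower bound $\frac12 \Var_Y v \leq \var_Y v$, I would start from any partition $q_0 < q_1 < \dots < q_N$ of $Y$, refine it so that every gap $q_i - q_{i-1}$ is at most some fixed small $\kappa$ (refinement can only increase the variation sum), and then use the crucial geometric fact: the pair $\{q_{i-1}, q_i\}$ lies in $B_\kappa(\xi)$ for every $\xi$ in the interval $[q_i - \kappa, q_{i-1} + \kappa]$, which has length $2\kappa - (q_i - q_{i-1}) \geq \kappa$, and after intersecting with $Y$ still has length at least $\kappa/2$ even for the two boundary cells. Hence $\Osc(v, B_\kappa(\xi)) \geq |v(q_i) - v(q_{i-1})|$ on a set of Lebesgue measure $\geq \kappa/2$, and summing over $i$ gives
$$\int_Y \Osc(v, B_\kappa(\xi)) \, d\xi \geq \tfrac{\kappa}{2} \sum_{i=1}^N |v(q_i) - v(q_{i-1})|.$$
Dividing by $\kappa$ and taking the supremum over all refined partitions produces $\var_Y v \geq \frac12 \Var_Y v$.

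The only genuine obstacle is a bookkeeping one: since the paper defines $\Var_Y v$ as an infimum over a.e.\ representatives while $\Osc(v,\cdot)$ is defined via pointwise suprema, I would fix once and for all a canonical (e.g.\ everywhere right-continuous) representative of $v$, carry out both estimates with this representative, and only pass to the infimum definition of $\Var_Y v$ at the very end. The boundary-cell correction in the lower bound is precisely what forces the factor $\frac12$ rather than $1$.
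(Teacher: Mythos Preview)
Your upper bound is correct and in fact yields the sharper constant $2$ rather than $3$; the paper argues this direction differently, partitioning $Y$ into intervals $J$ of length at most $\kappa$ and bounding $\Osc(v,B_\kappa(x))$ for $x\in J$ by the oscillation over $J$ together with its two neighbours, which is where the factor $3$ comes from.

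Your lower bound, however, has a genuine gap at the step ``summing over $i$''. The sets $S_i=[q_i-\kappa,\,q_{i-1}+\kappa]$ on which $\Osc(v,B_\kappa(\xi))\geq |v(q_i)-v(q_{i-1})|$ will in general overlap: a single $\xi$ lies in $S_i$ precisely when both $q_{i-1}$ and $q_i$ belong to $B_\kappa(\xi)$, and if many refined partition points fall into $B_\kappa(\xi)$ then $\xi$ belongs to many $S_i$ simultaneously. From $\int_{S_i}\Osc(v,B_\kappa(\cdot))\,d\Leb\geq\tfrac{\kappa}{2}|v(q_i)-v(q_{i-1})|$ for each $i$ one therefore cannot deduce $\int_Y\Osc(v,B_\kappa(\cdot))\,d\Leb\geq\tfrac{\kappa}{2}\sum_i|v(q_i)-v(q_{i-1})|$. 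A clean fix is: given the original partition, set $\kappa$ equal to its minimum gap, refine so that every gap lies in $[\kappa/2,\kappa]$ (possible since every original gap is $\geq\kappa$), and replace the overlapping $S_i$ by the pairwise disjoint open cells $(q_{i-1},q_i)$ themselves. Because each gap is at most $\kappa$, both endpoints of cell $i$ lie in $B_\kappa(\xi)$ for every $\xi$ in that cell; because each gap is at least $\kappa/2$, summing over the disjoint cells gives $\int_Y\Osc(v,B_\kappa(\cdot))\,d\Leb\geq\tfrac{\kappa}{2}V(P')\geq\tfrac{\kappa}{2}V(P)$, and the desired inequality follows. For the record, the paper does not prove this direction at all but simply quotes Buzzi--Keller.
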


\begin{proof}
\cite[Lemma 1]{BK} states that $\Var_Y v \leq 2 \var_Y v$.
For the other inequality, choose $\kappa \in (0,1)$ and partition $Y$ into 
half-open intervals $J$ of length $|J| \leq \kappa$.
For each such $J$, let $J'$ and $J''$ denote its left and right neighbour.
Then
\begin{eqnarray*}
\frac{1}{\kappa} \int_Y \Osc(v, B_\kappa(x)) \, d\Leb
&=& \frac{1}{\kappa} \sum_J \int_J \Osc(v, B_\kappa(x)) \, d\Leb
\leq \frac{1}{\kappa} \sum_J \Leb(J) \Osc_{J \cup J' \cup J''} v \\
&\leq& \sum_J \Osc_{J \cup J' \cup J''} v \leq 3 \Var_Y v.
\end{eqnarray*}
Both inequalities together prove \eqref{eq:equiv}.
\end{proof}

Recall that $K := \min\{ |F(a)| : a \in \alpha\}$.

\begin{lemma}\label{lem:Varvar} 
Let $v \in \BV(Y)$ such that $\Var_Y v \leq K_0 \| v \|_1$ for some $K_0 > 1$.
Choose $\eta_1 \in (0,1)$ such that Lemma~\ref{lem:eta1} holds
and take $K_1 =  6e^{C_1}/\eta_1$. 
Let
$$
r_0 := \max\Big\{ k, k_1 + \Big(\log \frac{108 K_0 K (\rho_0-2)}{e^C_1\rho_0} \Big)/ \log \frac{\rho_0}{2} \Big\}.
$$
Then for every $r > r_0$ and all $I_r \in Q_r$,
$$
\|v\|_1 \leq \frac{K_1}{\Leb(I_r)} \int_{F^{-r}(I_r)} |v|\, d\Leb.
$$
\end{lemma}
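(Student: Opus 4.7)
The plan is to exploit Lemma~\ref{lem:eta1} to distribute the preimages $F^{-r}(I_r)$ uniformly throughout $Y$, and then use the hypothesis $\Var_Y v \leq K_0 \|v\|_1$ through the $\var$ seminorm to rule out $v$ concentrating its $L^1$-mass away from these preimages.

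First, partition $Y$ into consecutive intervals $J_1,\dots,J_m$ of common length $\kappa$, with $\kappa \in (0,1)$ to be fixed at the end as a function of $K_0$. The lower bound on $r_0$ is chosen precisely so that, for all $r>r_0$, Lemma~\ref{lem:eta1} applies to each $J=J_i$ with a suitable $\tau<\kappa$. For each $i$, pick $z_i$ in the interior of $I_r$. Since $I_r\in Q_r$ lies in a single atom of $\P_r$, which itself lies in a single atom of the image partition of $F^r$, every $r$-cylinder $\tilde a\in (J_i)_{z_i}$ actually satisfies $F^r(\tilde a)\supset I_r$. Lemma~\ref{lem:eta1} produces a family of inverse branches $h\in\H_r$ with $h(I_r)\subset J_i$ such that $\sum \Leb(\tilde a) \geq \eta_1\Leb(J_i)$. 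Adler's distortion estimate \eqref{eq:adler} gives $\Leb(h(I_r))\geq e^{-C_1}\Leb(I_r)\Leb(\tilde a)/\Leb(F^r(\tilde a))\geq e^{-C_1}\Leb(I_r)\Leb(\tilde a)$, so that
\[
\Leb(F^{-r}(I_r)\cap J_i) \geq \eta_1 e^{-C_1}\,\Leb(I_r)\,\Leb(J_i).
\]

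Next, I would use the elementary inequality $\inf_{J_i}|v|\geq \Leb(J_i)^{-1}\int_{J_i}|v|\,d\Leb - \Osc_{J_i} v$ to deduce
\[
\int_{F^{-r}(I_r)\cap J_i}\!\!\!|v|\,d\Leb \ \geq\  \eta_1 e^{-C_1}\Leb(I_r)\Bigl(\int_{J_i}|v|\,d\Leb \,-\,\Leb(J_i)\,\Osc_{J_i} v\Bigr).
\]
Summing over $i$ and estimating the error term via the $\var$-seminorm --- for any $x\in J_i$, $B_\kappa(x)\supset J_i$ and hence $\Osc_{J_i}v\leq \Osc(v,B_\kappa(x))$, so
\[
\sum_i \Leb(J_i)\Osc_{J_i}v \ \leq\  \int_Y\!\Osc(v,B_\kappa(x))\,d\Leb \ \leq\  \kappa\,\var_Y v \ \leq\  3\kappa\, \Var_Y v \ \leq\  3\kappa K_0\,\|v\|_1
\]
by \eqref{eq:equiv} and the hypothesis --- yields
\[
\int_{F^{-r}(I_r)}|v|\,d\Leb \ \geq\  \eta_1 e^{-C_1}\Leb(I_r)\,(1 - 3\kappa K_0)\,\|v\|_1.
\]

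Finally, pick $\kappa$ in terms of $K_0$ small enough that $1-3\kappa K_0$ is bounded away from zero by the amount needed to obtain the explicit constant $K_1=6e^{C_1}/\eta_1$; the numerical constant $108$ inside the definition of $r_0$ is simply the result of matching this $\kappa$ with the corresponding value of $\tau$ in Lemma~\ref{lem:eta1}. The only real obstacle is cosmetic: $Y$ need not divide into equal intervals of length exactly $\kappa$, so the endpoints must be handled by a slight enlargement or by allowing $\Leb(J_i)\in[\kappa/2,\kappa]$, which affects only the constant $K_1$; the substance of the argument lies entirely in the quantitative equidistribution of preimages of $I_r$ furnished by Lemma~\ref{lem:eta1}, combined with the $\var$-type oscillation bound above.
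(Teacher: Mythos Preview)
Your argument is correct and actually more streamlined than the paper's. You use the same three ingredients --- Lemma~\ref{lem:eta1} for the density of preimages, the Adler distortion bound, and the equivalence \eqref{eq:equiv} between $\Var$ and $\var$ --- but you assemble them into a direct lower bound, whereas the paper argues by contradiction: it assumes some $I_r$ violates the conclusion, isolates a family $M(I_r)$ of partition intervals $J$ on which $F^{-r}(I_r)$ carries disproportionately little $|v|$-mass, shows $M(I_r)$ carries at least half of $\|v\|_1$, and then uses Lemma~\ref{lem:eta1} and distortion to force $\inf_J|v|\leq c\sup_J|v|$ on each such $J$, producing enough oscillation to contradict $\Var_Y v\leq K_0\|v\|_1$. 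Your single inequality $\inf_{J_i}|v|\geq \Leb(J_i)^{-1}\int_{J_i}|v|\,d\Leb-\Osc_{J_i}v$, summed and controlled via $\var$, collapses that case analysis into one line. You also notice (which the paper does not exploit) that since $Q_r$ refines $\P_r$, the image $F^r(\tilde a)$ of any cylinder hitting an interior point of $I_r$ must contain all of $I_r$; the paper only uses that it contains half of $I_r$ by taking the midpoint, costing a harmless factor of $2$. Your direct route gives a slightly cleaner path to the stated constant $K_1=6e^{C_1}/\eta_1$; the only caveat is the cosmetic issue you already flagged (allowing $\Leb(J_i)\in[\kappa/2,\kappa]$), which is exactly how the paper handles it as well.
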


\begin{proof}[Proof of Lemma~\ref{lem:Varvar}]
Fix $\kappa_1:= (18K_0)^{-1}$. Since we assumed that $K_1 > 6e^{C_1}/\eta_1$
we have $(1-\frac{4e^{C_1}}{\eta_1 K_1}) \geq 6K_0 \kappa_1$.
Let $E$ be a partition of $Y$ into half-open intervals $J = [p,q)$
of length $\frac{\kappa_1}{3} \leq \Leb(J) \leq \frac{\kappa_1}{2}$.
Next recall that  $K := \min\{ |F(a)| : a \in \alpha\}$ and take 
$r > r_0$.
Note that this $r_0$ is the bound from 
Lemma~\ref{lem:eta1} with $\tau = \kappa_1/3 = 1/(54K_0)$.

We prove the lemma by contradiction, so assume
that there exists $I_r \in Q_r$  such that 
$\| v \|_1 > \frac{K_1}{\Leb(I_r)} \int_{F^{-r}(I_r)} |v|\, d\Leb$.
Define
$$
M(I_r) = \Big\{ J \in E \ : \  \int_{F^{-r}(I_r) \cap J}|v| \, d\Leb \leq \frac{2 \Leb(I_r)}{K_1} \int_J |v| \, d\Leb \Big\}.
$$
If $\sum_{J \in M(I_r)} \int_J |v|\, d\Leb < \frac12 \|v\|_1$
(so $\sum_{J \notin M(I_r)} \int_J |v|\, d\Leb > \frac12 \|v\|_1$),
then we have
\begin{eqnarray*}
\int_{F^{-r}(I_r)} |v| \, d\Leb &\geq& \sum_{J \notin M(I_r)} \int_{F^{-r}(I_r) \cap J} |v| \, d\Leb \\
&>& \frac{2\Leb(I_r)}{K_1} \sum_{J \notin M(I_r)} \int_J |v| \, d\Leb 
> \frac{2\Leb(I_r)}{K_1} \frac12 \int_Y |v| \, d\Leb, 
\end{eqnarray*}
contradicting our choice of $I_r$. Therefore, it remains to deal with the case
\begin{equation}\label{eq:M}
\sum_{J \in M(I_r)} \int_J |v|\, d\Leb > \frac12 \|v\|_1.
\end{equation}
Recall that $e^{C_1}$ is a uniform distortion
bound for the inverse branches of $F^r$.
Let $z$ be the middle point of $I_r$ and $J_z = \{ a \in \alpha^r : a \subset J, z \in F^r(a)\}$.
This means in particular that $\frac{\Leb(F^r(a) \cap I_r}{\Leb(F^r(a))} \geq \frac12 \Leb(I_r)$ for each $a \in J_z$.
By Lemma~\ref{lem:eta1}, $\Leb(\cup_{a \in J_z} a) \geq \eta_1 \Leb(J)$. 
This gives 
\begin{eqnarray*}
\int_{F^{-r}(I_r) \cap J} |v| \, d\Leb
&\geq& \inf_J |v| \Leb(F^{-r}(I_r) \cap J) 
\geq \inf_J |v| \sum_{a \in J_z} \Leb(F^{-r}(I_r) \cap a)  \\
&\geq&
\inf_J |v| \sum_{a \in J_z} e^{-C_1} \frac{\Leb(F^r(a) \cap I_r)}{\Leb(F^r(a))}  \Leb(a) \\
&\geq&
\frac{\inf_J |v|}{2e^{C_1}}\sum_{a \in J_z} \Leb(a) \Leb(I_r)
\geq \frac{\eta_1 \inf_J |v|}{2e^{C_1}} \Leb(J) \Leb(I_r).
\end{eqnarray*}
Hence for each $J \in M(I_r)$,
\begin{eqnarray*}
\Leb(J) \Leb(I_r)\inf_J |v| &\leq& \frac{2e^{C_1}}{\gamma} \int_{F^{-r}(I_r) \cap J} |v|\, d\Leb \\
&\leq& \frac{4e^{C_1}}{\eta_1 K_1} \Leb(I_r) \int_J |v| \, d\Leb
\leq  \frac{4e^{C_1}}{\eta_1 K_1}  \Leb(J) \Leb(I_r) \sup_J |v|
\end{eqnarray*}
and therefore
$\inf_J |v| \leq \frac{4e^{C_1}}{\eta_1 K_1} \sup_J |v|$ and
\begin{equation}\label{eq:OscJ}
\Osc_J v \geq \Osc_J |v| \geq (1-\frac{4e^{C_1}}{\eta_1 K_1}) \sup_J |v|.
\end{equation} 
Recall that by the choice of $\kappa_1$, $ \kappa_1^{-1}(1-\frac{2e^{C_1}}{\eta_1 K_1}) \geq 6K_0$. Bounding the $\sup$ from below using \eqref{eq:OscJ}, 
we obtain
$$
\sup_{0< \kappa < 1} \frac{1}{\kappa} \int_J \Osc(v, B_\eps(x)) \, d\Leb
\geq \Leb(J) \kappa_1^{-1} (1-\frac{4e^{C_1}}{\eta_1 K_1}) \sup_J |v|
\geq 6K_0 \Leb(J) \sup_J |v|.
$$
By the second inequality in \eqref{eq:equiv},
\begin{eqnarray*}
\Var_Y v &\geq& \frac13 \var_Y v \geq
\frac{1}{3\kappa} \sum_{J \in E} \int_J \Osc(v, B_{\kappa}(x)) \, d\Leb\\
&\geq& \frac13 \sum_{J \in M(I_r)} 6K_0 \Leb(J)\sup_J |v| 
\geq 2K_0  \sum_{J \in M(I_r)} \int_J |v| \, d\Leb.
\end{eqnarray*}
Finally \eqref{eq:M} gives
$\Var_Y v > K_0 \int_Y |v| \, d\Leb = K_0 \| v \|_1$.
This contradicts the assumption of the lemma, completing the proof.
\end{proof}

\section{Proof of Theorem~\ref{thm-decay} }\label{sec:expo}

The proof of Theorem~\ref{thm-decay} follows closely the argument used in~\cite[Proof of Theorem 2.1]{AM} 
with obvious required modifications. As in~\cite{AM}, the conclusion follows once we show
that the Laplace transform $\hat\rho(s):=\hat\rho(s)(v,w):=\int_0^\infty e^{st} \rho_{t}(v,w)\, dt$ behaves as described
in the result below.

\begin{lemma}\label{lemma-AM217}
There exists $\eps>0$ such that $\hat\rho(s)$ is analytic on $\{\Re s>\eps\}$
for all $v\in F_{\BV,2}(Y^\varphi)$ and $w\in L^\infty(Y^\varphi)$. Moreover,
there exists $C>0$ such that $|\hat\rho(s)|\leq C (1+|b|^{1/2})\|v\|_{\BV, 2}\|w\|_{\infty}$,
for all $s=\sigma+ib$ with $\sigma\in [0, \frac 12\eps]$.
\end{lemma}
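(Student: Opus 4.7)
The plan is to follow the Paley--Wiener approach developed in~\cite{BalVal, AM}: express $\hat\rho(s)$ through a resolvent of the twisted transfer operator, and then invoke Corollary~\ref{cor-main}. First I would derive a Pollicott--Ruelle type identity for $\hat\rho(s)$ by unfolding the suspension. Using the identification $(y,\varphi(y))\sim(Fy,0)$,
\[
\int_{Y^\varphi} v\cdot w\circ F_t\,d\mu^\varphi = \frac{1}{\bar\varphi}\sum_{n\geq 0}\int_Y\!\!\int_0^{\varphi(y)}\!\! v(y,u)\,w(F^ny,t+u-\varphi_n(y))\,\mathbf 1_{\{\varphi_n(y)-u\leq t<\varphi_{n+1}(y)-u\}}\,du\,d\mu,
\]
and applying $\int_0^\infty e^{st}\,dt$ collapses the indicator, producing factors $e^{-su}$ and $e^{-s(\varphi_n(y)-u)}$. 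With $\tilde v_s(y):=\int_0^{\varphi(y)}e^{-su}v(y,u)\,du$ and $\tilde w_s$ defined analogously, standard rearrangement leads to
\[
\hat\rho(s) = \frac{1}{\bar\varphi}\int_Y \tilde v_s\cdot(I-\L_s)^{-1}\tilde w_s\,d\Leb \;-\; R(s),
\]
where $R(s)$ is the constant-mean correction and is manifestly analytic on $\{\Re s>0\}$. Analyticity of the main term on $\{\Re s>\eps\}$ for some $\eps>0$ follows from \eqref{eq:sum}: for $\sigma$ large one has $\|\L_\sigma\|_{\BV}<1$, so the Neumann series converges, and Proposition~\ref{prop-continuity} extends analyticity into a vertical strip.

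For the quantitative bound on $\sigma\in[0,\eps/2]$, I would use $C^2$-regularity of $v$ in $u$ to integrate by parts twice:
\[
\tilde v_s(y) = \frac{1}{s^2}\int_0^{\varphi(y)} e^{-su}\partial_u^2 v(y,u)\,du - \frac{1}{s^2}\Big[e^{-su}\partial_u v(y,u)+se^{-su}v(y,u)\Big]_{u=0}^{u=\varphi(y)}.
\]
The interior integral has $L^\infty$-norm $O(|b|^{-2})$ and its $\Var_Y$-norm is $O(|b|^{-1})$ since $\partial_u^2 v$ is $\BV$ in $y$. The boundary terms at $u=\varphi(y)$ involve $y\mapsto v(y,\varphi(y))$, whose $\BV_y$-norm is controlled by the $C^1$-regularity of $\varphi$ combined with $v\in F_{\BV,2}$. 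Altogether $\|\tilde v_s\|_1\leq C|b|^{-2}\|v\|_{\BV,2}$ and $\Var_Y\tilde v_s\leq C|b|^{-1}\|v\|_{\BV,2}$, so $\|\tilde v_s\|_b\leq C|b|^{-1}\|v\|_{\BV,2}$. For $\tilde w_s$ one only needs the crude $\|\tilde w_s\|_\infty\leq\|\varphi\|_\infty\|w\|_\infty$.

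Combining via the bilinear pairing and applying Corollary~\ref{cor-main} with $\alpha\in(0,1)$,
\[
|\hat\rho(s)+R(s)| \leq \|\tilde v_s\|_1\cdot\|(I-\L_s)^{-1}\tilde w_s\|_\infty \leq C|b|^{-2}\|v\|_{\BV,2}\cdot(1+|b|)\,|b|^\alpha\,\|w\|_\infty,
\]
using $\|\cdot\|_\infty\leq(1+|b|)\|\cdot\|_b$. For $\alpha<1/2$ this is $O(|b|^{\alpha-1})$, well within the claimed $(1+|b|^{1/2})$ growth. The main obstacle will be justifying the passage $(I-\L_s)^{-1}\tilde w_s$ when $\tilde w_s$ is merely $L^\infty$, not $\BV$; this requires either applying one iterate of $\L_s$ to $\tilde w_s$ first to gain regularity in $y$ (which is enough since $\L_s$ maps $L^\infty$ into $\BV$ with norm $O(1+|b|)$ by a variant of Remark~\ref{rem-LYtwist}), or recasting the pairing $\int \tilde v_s\cdot(\cdot)$ as a duality pairing in which only the $L^1$-norm of $\tilde v_s$ and the $L^\infty$-norm of the resolvent output are needed. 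A subsidiary obstacle is the careful bookkeeping of $R(s)$, which must cancel the $n=0$ contribution and the apparent pole at $s=0$ coming from the product of means.
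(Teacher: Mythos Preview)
Your overall framework (unfold the suspension, express $\hat\rho(s)$ via the resolvent $(I-\L_s)^{-1}$, integrate by parts in $u$ to gain $|b|$-decay, then invoke Corollary~\ref{cor-main}) is the same as the paper's, which refers to \cite[Lemma~2.17]{AM} for the large-$|b|$ regime. However, there is a genuine gap: Corollary~\ref{cor-main} only controls $\|(I-\L_s)^{-1}\|_b$ for $|b|\geq b_0$. Your argument says nothing about $0<|b|<b_0$, and this is precisely the region the paper singles out as requiring new work. For small $|b|$ one must show separately that $1\notin\mathrm{spec}(\L_s)$ (aperiodicity) when $s\neq 0$, and that near $s=0$ the simple eigenvalue $\lambda_s$ produces a pole which is exactly cancelled by the mean-subtraction term $R(s)$. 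The paper handles this via quasi-compactness of $\tilde\L_{ib}$ on the imaginary axis (Remark~\ref{rem:imaginary_axis}, where the genuine Lasota--Yorke inequality with $\|\cdot\|_1$ is available), combined with the continuity estimate of Proposition~\ref{prop-continuity} to perturb into $\sigma>0$; it notes explicitly that the \cite{AM} argument fails here because $\|\tilde\L_{\sigma+ib}\|_1$ is \emph{not} bounded for $\sigma>0$ in the present setting. Your line ``for $\sigma$ large one has $\|\L_\sigma\|_{\BV}<1$'' is also wrong with the paper's sign convention $\L_s v=\L(e^{s\varphi}v)$ and $\varphi\geq 1$: the operator norm grows, not shrinks, as $\sigma\to+\infty$.

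A secondary point: in the standard identity the resolvent acts on the \BV\ factor, i.e.\ one pairs $\tilde w_s$ (merely $L^\infty$) against $(I-\L_s)^{-1}$ applied to a \BV\ function built from $v$ and the density. Your formula applies $(I-\L_s)^{-1}$ to $\tilde w_s$, which is why you then have to contemplate smoothing $\tilde w_s$ by one iterate; reversing the roles avoids this detour entirely.
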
 

The proof of Theorem~\ref{thm-decay} given Lemma~\ref{lemma-AM217} is standard, relying on the
formula $\rho_{t}(v,w)=\int_\Gamma e^{-st}\hat\rho(s)\, ds$, where $\Gamma=\{\Re s=\eps/2\}$; it
goes, for instance, exactly the same as ~\cite[Proof of Theorem 2.1]{AM} given ~\cite[Lemma 2.17]{AM}, so we omit this.

The proof of Lemma~\ref{lemma-AM217} uses three ranges of $n$ and $b$: 
i) $n\leq A\log |b|$, $|b|\geq 2$ with $A$ as in Theorem~\ref{th-main}, ii) $|b|\geq \max\{4\pi/D, 2\}$ and iii) 
$0 < |b| < \max\{4\pi/D, 2\}$.
The first two regions go almost word by word as in \cite[Lemma 2.17]{AM}.
For the third region, the part of the proof in \cite{AM}
where the standard form of Lasota-Yorke inequality of $\tilde \L_s$ is used
doesn't apply (in our case $\|\tilde \L_{\sigma+ib}\|_1$ with $\sigma>0$ is not bounded).
Instead, we use quasi-compactness of $\tilde \L_{ib}$ (\ie $\sigma = 0$)
given by Remark~\ref{rem:imaginary_axis} and the continuity estimate 
of Proposition~\ref{prop-continuity}.
These together ensure  that the essential spectral radius of $\tilde \L_s$ is strictly 
less than $1$, and that the spectrum in a neighbourhood of $1$ contains
only isolated eigenvalues.
The rest of the argument goes exactly as \cite[Proof of Lemma 2.22]{AM}, distinguishing between
$b\neq 0$
the  and $b=0$.  In particular, proceeding as in \cite[Proof of Lemma 2.22]{AM}, we obtain the aperiodicity property and analyticity of the operator $Q_{ib}$ in the notation of \cite[Proof of Lemma 2.22]{AM}
in a neighborhood of $b$ for each $b\neq 0$. Also, in a neighborhood of $b=0$ we speak of the isolated eigenvalue $\lambda_{ib}$
(for the operator $\tilde \L_{ib}$) and corresponding spectral projection $P_{ib}$. Using again the continuity property of $\tilde \L_s$
given by  Proposition~\ref{prop-continuity}, we can continue $\lambda_s$ and $P_s$ in a neighborhood of $s=0$.
\\[4mm]
{\bf Acknowledgements:} We would like to thank Ian Melbourne for valuable discussions 
and Tomas Persson for informing us about reference \cite{AP}. 
We are also grateful for the support the Erwin Schr\"odinger Institute in Vienna, where this paper was completed.


\begin{thebibliography}{10}

\bibitem{AM} V.\ Ara\'ujo, I.\ Melbourne, 
{Exponential decay of correlations for non-uniformly hyperbolic flows with a $C^{1+\alpha}$ stable foliation.}
Preprint 2015  arXiv:1504.04316,  to appear in Annales Henri Poincar\'e.

\bibitem{AP} M.\ Aspenberg, T.\ Persson,
{Shrinking targets in parametrised families.}
Preprint 2016  arXiv:1603.01116

\bibitem{AGY} A.\ Avila, S.\ Gou\"ezel, J.-C.\ Yoccoz, 
{Exponential mixing for the Teichm\"uller flow,} 
\emph{Publ.\ Math.\ Inst.\ Hautes \'Etudes Sci.}\ {\bf 104} (2006), 143--211. 

\bibitem{BalVal}
V.\ Baladi, B.\ Vall\'ee,
{Exponential decay of correlations for surface semi-flows without finite Markov partitions,}
\emph{Proc.\ Amer.\ Math.\ Soc.\ } \textbf{133} (2005), 865–-874.

\bibitem{BK} J.\ Buzzi,\ G. Keller,
Zeta functions and transfer operators for multidimensional piecewise affine and expanding maps,
\emph{Ergod.\ Th.\ \& Dynam.\ Sys.} {\bf 21} (2001), 690--716. 

\bibitem{BE} O.\ Butterley,\ P. Eslami, 
{Exponential mixing for skew products with discontinuities,}
\emph{Trans.\ of the AMS} {\bf 369} (2017), 783--803. 

\bibitem{D} D.\ Dolgopyat,
{On the decay of correlations in Anosov flows,}
\emph{Ann. of Math.} {\bf 147} (1998) 357--390.

\bibitem{Eslami} P.\ Eslami,  
Stretched-exponential  mixing  for $C^{1+\alpha}$
skew  products  with  discontinuities, 
\emph{Ergod.\ Th. \& Dynam.\ Sys.}, published online July 2015.

\bibitem{Giusti} E.\ Giusti {Minimal surfaces and functions of bounded variation,}
\emph{Monographs in Mathematics}, \emph{ Birkh\"auser}, {\bf 80}  (1984).

\bibitem{L} C.\ Liverani,
On contact Anosov flows,
\emph{Ann.\ of Math.} {\bf 159} (2004) 1275--1312.

\bibitem{Keller} G.\ Keller,
{Generalized bounded variation and applications to piecewise monotonic transformations,}
\emph{Z.\ Wahrscheinlichkeitstheorie verw.\ Geb.} {\bf 69} (1985), 461--478. 

\bibitem{Rychlik} M.\ Rychlik,
{Bounded variation and invariant measures,}
\emph{Studia Math.\ } {\bf 69} (1983), 69--80.

\bibitem{Zwei0} R.\ Zweim\"uller,
{Ergodic structure and invariant densities of non-Markovian interval maps 
with indifferent fixed points,}
\emph{Nonlineariy} \textbf{11} (1998), 1263--1267.

\bibitem{Zwei} R.\ Zweim\"uller,
{Ergodic properties of infinite measure-preserving interval maps with indifferent fixed points,}
\emph{Ergod.\ Th. \& Dynam.\ Sys.} \textbf{20} (2000), 1519--1549.
\end{thebibliography}
\end{document}